\theoremstyle{plain}
\newtheorem{theorem}{Theorem}[section]
\newtheorem{remark}{Remark}[section]
\newtheorem{lemma}{Lemma}[section]
\title[Complete $\lambda$-translators in $\mathbb{R}^{4}_{1}$]
{Complete $3$-dimensional $\lambda$-translators in the Minkowski space $\mathbb{R}^{4}_{1}$}
\author [Z. Li and G. Wei]{Zhi Li and Guoxin Wei}
\address{Zhi Li \\  School of Mathematical Sciences, South China Normal University,
\newline \indent 510631, Guangzhou,  China.  \newline \indent lizhihnsd@126.com}
\address{Guoxin Wei \\  School of Mathematical Sciences, South China Normal University,
\newline \indent 510631, Guangzhou,  China. \newline \indent  weiguoxin@tsinghua.org.cn}
\begin{document}
\maketitle

\begin{abstract}
In this paper, we obtain the classification theorem for three-dimensional complete space-like $\lambda$-translators $x:M^{3} \rightarrow \mathbb R^{4}_{1}$  with constant norm of the second fundamental form and constant $f_{4}$ in the Minkowski space $\mathbb R^{4}_{1}$.
\end{abstract}

\footnotetext{2010 \textit{Mathematics Subject Classification}:
53C44, 53C40.}
\footnotetext{{\it Key words and phrases}:
the second fundamental form, $\lambda$-translator, the generalized maximum principle.}

\footnotetext{
The second author was partly supported by grant No. 11771154 of NSFC, by grant No. 2019A1515011451 of Natural Science Foundation of Guangdong Province
and by GDUPS (2018).}

\section{introduction}
\vskip2mm
\noindent
\noindent
Let $x:M^{n} \rightarrow \mathbb{R}^{n+1}_{1}$ be an immersed space-like hypersurface in the Minkowski space $\mathbb{R}^{n+1}_{1}$.
Fix a constant vector $T \neq 0 $ in $\mathbb{R}^{n+1}_{1}$ and $\lambda$ a real number. In this paper we study
orientable hypersurface $M^{n}$ in $\mathbb{R}^{n+1}_{1}$ whose mean curvature vector $\vec{H}$ satisfies
\begin{equation}\label{1.1-1}
\vec{H}+ T^{\perp}=\lambda \mathbf{n},
\end{equation}
where $\vec{H}= H\mathbf{n}$ and $\mathbf{n}$ is the unit normal
vector. Then $x$ is called a $\lambda$-translating soliton or simply a $\lambda$-translator of the mean curvature flow (MCF).
The constant vector $T$ will be called the corresponding translating vector or density
vector. In particular, if we denote by $\langle\cdot, \cdot\rangle$ the standard inner product on $\mathbb{R}^{n+1}_{1}$,
then the equation \eqref{1.1-1} is equivalent  to
\begin{equation}\label{1.1-2}
H-\langle T, \mathbf{n}\rangle=\lambda, \ \ \langle \mathbf{n}, \mathbf{n}\rangle=-1,
\end{equation}
where $x$ is space-like.

The interest of this equations is due to its relation with
manifolds with density. So it naturally makes sense to study the $\lambda$-translator in $\mathbb{R}^{n+1}_{1}$.
Indeed, as described in \cite{L2}, considering $\mathbb{R}^{n+1}_{1}$ with a positive
density function $e^{\phi}, \phi \in C^{\infty}(\mathbb{R}^{n+1}_{1})$, which serves as a weight for the volume
and the surface area. The first variation of the weighted volume $V_{\phi}(t)$ with density $e^{\phi}$
under compactly supported variations of $M^{n}$ is
\begin{equation}\label{1.1-3}
\left.\frac{d}{dt}\right|_{t=0}V_{\phi}(t)=-\int_{M^{n}}H_{\phi}\langle \mathbf{n}, \mathbf{\xi}\rangle dV_{\phi},
\end{equation}
where $\mathbf{\xi}$ is the variation vector and $H_{\phi}=H-\langle \nabla\phi, \mathbf{n} \rangle$.
So that $X$ is a critical point of the functional $V_{\phi}(t)$ for a given weighted volume if and only if $H_{\phi}$ is a constant function $H_{\phi}\equiv\lambda$: see (\cite{GM}, \cite{M}).
In particular, if we take $\phi : \mathbb{R}^{n+1}_{1}\rightarrow \mathbb{R}$ to be the height function $\phi(x) := \langle x, T\rangle$ , then the expression $H_{\phi}\equiv\lambda$ is
exactly the equation \eqref{1.1-2}. Secondly, a special case of \eqref{1.1-1} is when
$\lambda=0$. In such a case the immersion $x$ is called a translating soliton
of the mean curvature flow, or simply a translator (\cite{W1}).
Translators play an important role in the study of mean curvature flow.
On the one hand, a translating soliton is a solution of the mean curvature flow that
evolves purely by translations along the direction $T$.
On the other hand, they arise as blow-up solutions of MCF at type II singularities(\cite{HS1}, \cite{I}).
For instance, Huisken and Sinestrari (\cite{HS2})
proved that at type II singularity of a mean convex flow,
there exists a blow-up solution which is a convex translating solution.
Besides, in the
nonparametric form, the equation $H_{\phi}=0$ appeared in the classical article of Serrin
(\cite{S1}) and it was studied in the context of the maximum principle of elliptic
equations.
As we know, translating soliton have
been widely studied and various interesting results have been obtained in recent years.
For more information about translating soliton, please refer to the literatures (\cite{AW}, \cite{CSS}, \cite{H}, \cite{HR}, \cite{IR}, \cite{MH}, \cite{MSS}, \cite{P}, \cite{S}, \cite{S2}, \cite{W2}, \cite{X}).

In \cite{L1},  L\'{o}pez classified all $\lambda$-translators in $\mathbb{R}^{3}$
that are invariant by a one-parameter group of translations and a one-parameter group of rotations.
He also studied in \cite{L2} the shape of a compact $\lambda$-translator of $\mathbb{R}^{3}$
in terms of the geometry of its boundary, obtaining some necessary conditions for the existence of two-dimensional
compact $\lambda$-translators with a given closed boundary curve. In particular, he proved that
there do not exist any closed $\lambda$-translators of dimension two.
In fact, just as that $\lambda$-translating solitons are generalization of the translators of mean curvature flow, $\lambda$-hypersurfaces defined by Cheng and
 Wei in \cite{CW1} are generalization of the self-shrinkers of mean curvature flow. Self-shrinking solutions are important in the study of type-I singularities of
MCF. For instance, by proving the monotonicity formula, at a given type-I singularity of the
MCF, Huisken \cite{H1} proved that the flow is asymptotically self-similar, which implies that in this
situation the flow can be modeled by self-shrinking solutions. As is known, there have been many rigidity theorems and classification theorems for self-shrinkers
in the Euclidean space and the pseudo-Euclidean space.
Furthermore, there have been, up to now, several interesting and important results in the
study of $\lambda$-hypersurfaces. In particular, Cheng and Wei recently obtained a classification
theorem using their own generalized maximum principle (\cite{CW}) specially for $\lambda$-hypersurfaces,
which generalizes an interesting classification theorem in \cite{CO} for self-shrinkers.

The classification theorem also exists for $\lambda$-translators. For example, canonical examples of $\lambda$-translators in $\mathbb{R}^{n+1}_{1}$ are the space-like affine hyperplanes, and the right hyperbolic cylinders $\mathbb{H}^{k}(r) \times \mathbb{R}^{n-k}$ with $1 \leq k \leq n-1$, where
$\mathbb{H}^{k}(r)$ is the hyperbolic $k$-space defined by
$$\mathbb{H}^{k}(r)=\{x\in \mathbb{R}^{k+1}_{1}; \langle x, x\rangle = -{\color{red}r}^{2}\}.$$
Recently, Li, Qiao and Liu \cite{LQL} have classified  complete $\lambda$-translators in the Euclidean space $\mathbb{R}^{3}$ and the Minkowski space $\mathbb{R}^{3}_{1}$ with second fundamental form of constant length $S$.
For the higher dimension $n$, it is not easy to classify $\lambda$-translator in $\mathbb R^{n}$ and $\mathbb R^{n}_{1}$ with constant squared norm $S$ of the second fundamental form. In this paper, under the assumption that $f_{4}$ is constant, we give a complete classification for $3$-dimensional complete $\lambda$-translators in $\mathbb R^{4}_{1}$ with constant squared norm $S$ of the second fundamental form.
In fact, we prove the following result.
\begin{theorem}\label{theorem 1.1}
 Let $x: M^{3}\to \mathbb{R}^{4}_{1}$ be a
$3$-dimensional complete space-like $\lambda$-translator in $\mathbb R^{4}_{1}$.
If the squared norm $S$ of the second fundamental form and $f_{4}$ are constant, then $x: M^{3}\to \mathbb{R}^{4}_{1}$ is
isometric to one of
\begin{enumerate}
\item $\mathbb {R}^{3}_{1}$,
\item $\mathbb{H}^{1}(\frac{1}{\lambda})\times \mathbb{R}^{2}$,
\item $\mathbb{H}^{2}(\frac{2}{\lambda})\times \mathbb{R}^{1}$.
\end{enumerate}
In particular, $S$ must be $0$, $\lambda^{2}$ and $\frac{1}{2}\lambda^{2}$; $f_4$ must be $0$, $\lambda^{4}$ and $\frac{1}{8}\lambda^{4}$, where $\lambda \neq 0$, $S=\sum\limits_{i,j}h_{ij}^2$ and  $f_{4}=\sum\limits_{i,j,k,l}h_{ij}h_{jk}h_{kl}h_{li}$.
\end{theorem}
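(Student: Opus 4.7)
The strategy follows the template that has become standard for classifying $\lambda$-hypersurfaces and translators under pointwise constancy hypotheses: derive Simons-type identities for a suitable drift Laplacian, exploit the two constancy conditions $S\equiv\mathrm{const}$ and $f_{4}\equiv\mathrm{const}$ to extract pointwise algebraic-differential identities, use the Cheng--Wei generalized maximum principle to cope with non-compactness, and finish with a splitting argument once the second fundamental form is shown to be parallel.

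\emph{Step 1 (drift Laplacian and Simons identity).} I would introduce the operator $\mathcal{L}f:=\Delta f-\langle T^{\top},\nabla f\rangle$, where $T^{\top}$ is the tangential projection of the translating vector along $M^{3}$. Differentiating \eqref{1.1-2} and using the Codazzi and Gauss identities for space-like hypersurfaces, with the sign conventions forced by the timelike normal $\langle\mathbf{n},\mathbf{n}\rangle=-1$, one records the commutation formulas for $\mathcal{L}h_{ij}$, $\mathcal{L}H$, $\mathcal{L}S$ and $\mathcal{L}f_{4}$. Schematically,
\[
\tfrac12\,\mathcal{L}\,S=|\nabla h|^{2}+P_{2}(H,S,f_{3}),\qquad \tfrac14\,\mathcal{L}\,f_{4}=|h\cdot\nabla h|^{2}+P_{4}(H,S,f_{3},f_{4},f_{5}),
\]
for explicit polynomials $P_{2},P_{4}$ determined by the $\lambda$-translator equation.

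\emph{Step 2 (generalized maximum principle).} Since $S$ is constant, Cauchy--Schwarz gives $|H|\le\sqrt{3S}$, and the Gauss equation $R_{ijkl}=-(h_{ik}h_{jl}-h_{il}h_{jk})$ then forces the sectional curvatures of $(M^{3},g)$ to be bounded, placing us squarely in the hypotheses of the Cheng--Wei generalized maximum principle for $\mathcal{L}$. The assumptions $\mathcal{L}S\equiv 0$ and $\mathcal{L}f_{4}\equiv 0$ turn the identities of Step 1 into two pointwise algebraic constraints linking $|\nabla h|^{2}$, $|h\cdot\nabla h|^{2}$ and the power sums $f_{3},f_{5}$. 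Applying the generalized maximum principle to $H$, and if necessary to $f_{3}$, along suitable maximizing and minimizing sequences should collapse these constraints onto $\nabla h\equiv 0$ globally.

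\emph{Step 3 (splitting and classification).} With the shape operator parallel, its eigenvalues are globally constant and its eigenspaces form parallel distributions on $M^{3}$. A de Rham-type decomposition, legitimate because the induced metric on a space-like hypersurface is Riemannian, isometrically splits $(M^{3},g)$ into constant-curvature factors lying in mutually orthogonal affine subspaces of $\mathbb{R}^{4}_{1}$. Substituting such products back into \eqref{1.1-2} pins down the translating direction $T$ to the Euclidean factor and forces the radii of the hyperbolic factors to equal $1/\lambda$ or $2/\lambda$, recovering exactly the three models listed; the stated values of $S$ and $f_{4}$ then drop out by direct computation.

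\emph{Main obstacle.} The heart of the proof lies in Step 2: the hypotheses $S,f_{4}$ constant couple $|\nabla h|^{2}$ to the a priori unknown power sums $f_{3}$ and $f_{5}$, so extracting $\nabla h\equiv 0$ requires a dimension-three algebraic inequality, analogous to the Chern--do Carmo--Kobayashi type estimates, that bounds the gradient terms by a non-positive polynomial in $(H,S,f_{3},f_{4},f_{5})$ once the Cheng--Wei principle has been used to eliminate the indeterminacy at the extrema of $H$. Identifying the sharp form of this inequality, and checking that its equality case corresponds precisely to the three models above, is where the genuine work is concentrated.
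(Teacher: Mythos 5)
There is a genuine gap. Your outline correctly names the standard toolkit (drift-Laplacian Simons-type identities, a generalized maximum principle, a final splitting), but the decisive part -- your Step 2 -- is not carried out, and the mechanism you propose for it does not match what can actually be made to work. The Omori--Yau type principle (which in this setting requires the Bakry--Emery tensor $Ric_{-V}$ bounded below, proved in the paper from the translator equation via $Ric_{-V}(e,e)=\sum_k h_{1k}^{2}-\lambda h_{11}\ge -\tfrac12\lambda^{2}$, not merely bounded sectional curvature) only yields information along a sequence of points $\{p_t\}$ where $H^{2}$ tends to its supremum or infimum; it cannot by itself ``collapse'' the identities for $\mathcal{L}S$ and $\mathcal{L}f_{4}$ onto $\nabla h\equiv 0$ globally, and no Chern--do Carmo--Kobayashi-type inequality bounding $|\nabla h|^{2}$ by a non-positive polynomial in $(H,S,f_{3},f_{4},f_{5})$ is exhibited or known here. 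The paper never proves $\nabla h\equiv 0$ as an intermediate step and uses no such inequality.

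What the paper actually does, and what your plan omits, is: first prove $\inf H^{2}>0$ (Theorem 3.1, itself a nontrivial contradiction argument along a minimizing sequence using the constancy of $S$ and $f_{4}$ and the expression of $\nabla f_{4}$, $\nabla^{2}f_{4}$ through $f_{3}$ and $H$); then, along maximizing and minimizing sequences for $H^{2}$, extract limits $\bar h_{ij},\bar h_{ijk},\bar h_{ijkl}$ (bounded thanks to \eqref{2.1-16}--\eqref{2.1-19}), differentiate the relations $S=\mathrm{const}$ and $f_{4}=\mathrm{const}$ twice, combine with the $\lambda$-translator formulas \eqref{2.1-13} and the Ricci identities, and run an exhaustive case analysis on the limiting principal curvatures (equal, two equal, all distinct; with and without vanishing products $\bar\lambda_{1}\bar\lambda_{2}\bar\lambda_{3}$), including a determinant computation that hinges on whether $S\bar f_{5}-\bar f_{3}f_{4}$ vanishes. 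The outcome is that $\sup H^{2}$ and $\inf H^{2}$ are each forced to equal the same explicit value tied to $\lambda$ ($\lambda^{2}=S$, $2S$ or $3S$), hence $H$ is constant, then $H=\lambda$ and $\langle T,\mathbf{n}\rangle=0$, so $T$ is tangent and $x(M^{3})$ is one of the three cylinders, with the stated values of $S$ and $f_{4}$. Your proposal defers precisely this entire analysis to an unidentified ``sharp inequality,'' so as written it does not constitute a proof, and its Step 2 as formulated (global vanishing of $\nabla h$ from the maximum principle) would fail.
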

\begin{remark}
We also obtain a similar classification for $3$-dimensional complete $\lambda$-translators in $\mathbb R^{4}$(see \cite{LCW20}). That is, for a $3$-dimensional complete $\lambda$-translator in the Euclidean space $\mathbb{R}^{4}$, if the squared norm $S$ of the second fundamental form and $f_{4}$ are constant, then
hypersurface is isometric to one of
$\mathbb {R}^{3}$;
$\mathbb{S}^{1}(\frac{1}{\lambda})\times \mathbb{R}^{2}$;
$\mathbb{S}^{2}(\frac{2}{\lambda})\times \mathbb{R}^{1}$.
\end{remark}

\vskip5mm
\section {Preliminaries}
\vskip2mm

\noindent
Let $x: M^{n} \rightarrow\mathbb{R}^{n+1}_{1}$ be an
$n$-dimensional  space-like hypersurface of the $(n+1)$-dimensional Minkowski space $\mathbb{R}^{n+1}_{1}$. Around each point of
$M^{n}$, we choose a local orthonormal frame field
$\{e_{A}\}_{A=1}^{n+1}$ in $\mathbb{R}^{n+1}_{1}$ with dual coframe field
$\{\omega_{A}\}_{A=1}^{n+1}$, such that, restricted to $M^{n}$,
$e_{1},\cdots, e_{n}$ are tangent on $M^{n}$.

\noindent
From now on,  we use the following conventions on the ranges of indices:
$$
 1\leq i,j,k,l\leq n
$$
and $\sum_{i}$ means taking  summation from $1$ to $n$ for $i$.
Then we have
\begin{equation*}
dx=\sum_{i}\limits \omega_{i} e_{i},
\end{equation*}
\begin{equation*}
de_{i}=\sum_{j}\limits \omega_{ij} e_{j}+\omega_{in+1} e_{n+1},
\end{equation*}
\begin{equation*}
de_{n+1}=\omega_{n+1i}e_{i}, \ \ \omega_{n+1i}=\omega_{in+1},
\end{equation*}
where $\omega_{ij}=-\omega_{ji}$ is the Levi-Civita connection of the hypersurface.

\noindent By  restricting  these forms to $M^{n}$,  we get
\begin{equation}\label{2.1-1}
\omega^{n+1}=0.
\end{equation}

\noindent Taking exterior derivatives of \eqref{2.1-1}, we obtain
\begin{equation*}
0=d\omega_{n+1}=\sum_{i}\omega_{n+1i}\wedge\omega_{i}.
\end{equation*}
By Cartan's lemma, we know that there exist local smooth functions $h_{ij}$ ,
$1\leq  i,j \leq n$, such that
\begin{equation}\label{2.1-2}
\omega_{in+1}=\sum_{j} h_{ij}\omega_{j},\quad
h_{ij}=h_{ji}.
\end{equation}

$$
h=\sum_{i,j}h_{ij}\omega_i\otimes\omega_{j},\quad  H= \sum_i\limits h_{ii}
$$
are called  the second fundamental form and the mean curvature of $x: M\rightarrow\mathbb{R}^{n+1}_{1}$, respectively.
Let $S=\sum_{i,j}\limits (h_{ij})^2$ be  the squared norm
of the second fundamental form  of $x: M\rightarrow\mathbb{R}^{n+1}_{1}$.
The induced structure equations of $M^{n}$ are given by
\begin{equation*}
d\omega_{i}=\sum_j \omega_{ij}\wedge\omega_{j}, \quad \omega_{i j}=-\omega_{ji},
\end{equation*}
\begin{equation*}
d\omega_{ij}=\sum_{k} \omega_{ik}\wedge\omega_{k j}-\frac{1}{2}\sum_{k,l}
R_{ijkl} \omega_{k}\wedge\omega_{l},
\end{equation*}
where $R_{ijkl}$ denotes components of the curvature tensor of the hypersurface.
Hence, the Gauss equations of the space-like hypersurface $x$ in $\mathbb{R}^{n+1}_{1}$ are as follows:
\begin{equation}\label{2.1-3}
R_{ijkl}=-(h_{ik}h_{jl}-h_{il}h_{jk}).
\end{equation}

\noindent
Defining the
covariant derivative of $h_{ij}$ by
\begin{equation}\label{2.1-4}
\sum_{k}h_{ijk}\omega_{k}=dh_{ij}+\sum_{k}h_{kj}\omega_{ki}+\sum_{k}h_{ik}\omega_{kj},
\end{equation}
we obtain the Codazzi equations
\begin{equation}\label{2.1-5}
h_{ijk}=h_{ikj}.
\end{equation}
By taking exterior differentiation of \eqref{2.1-4}, and
defining
\begin{equation}\label{2.1-6}
\sum_{l}h_{ijkl}\omega_{l}=dh_{ijk}+\sum_{l}h_{ljk}\omega_{li}
+\sum_{l}h_{ilk}\omega_{lj}+\sum_{l}h_{ijl}\omega_{lk},
\end{equation}
we have the following Ricci identities:
\begin{equation}\label{2.1-7}
h_{ijkl}-h_{ijlk}=\sum_{m}h_{mj}R_{mikl}+\sum_{m} h_{im}R_{mjkl}.
\end{equation}
Defining
\begin{equation}\label{2.1-8}
\begin{aligned}
\sum_{m}h_{ijklm}\omega_{m}&=dh_{ijkl}+\sum_{m}h_{mjkl}\omega_{mi}
+\sum_{m}h_{imkl}\omega_{mj}+\sum_{m}h_{ijml}\omega_{mk}\\
&+\sum_{m}h_{ijkm}\omega_{ml}
\end{aligned}
\end{equation}
and taking exterior differentiation of \eqref{2.1-6}, we get
\begin{equation}\label{2.1-9}
\begin{aligned}
h_{ijkln}-h_{ijknl}&=\sum_{m} h_{mjk}R_{miln}
+ \sum_{m}h_{imk}R_{mjln}+ \sum_{m}h_{ijm}R_{mkln}.
\end{aligned}
\end{equation}
For a smooth function $f$, we define
\begin{equation}\label{2.1-10}
\sum_i f_{,i}\omega_i=df,
\end{equation}
\begin{equation}\label{2.1-11}
\sum_j f_{,ij}\omega_j=df_{,i}+\sum_j
f_{,j}\omega_{ji},
\end{equation}
\begin{equation}\label{2.1-12}
|\nabla f|^2=\sum_{i }(f_{,i})^2,\ \ \ \  \Delta f =\sum_i f_{,ii}.
\end{equation}

\noindent Let $V$ be a tangent $C^{1}$-vector field on $M^{n}$, and denote by $Ric_{V} := Ric-\frac{1}{2}L_{V}g$ the
Bakry-Emery Ricci tensor with $L_{V}$ to be the Lie derivative along the vector field $V$. Define a
differential operator
\begin{equation*}
\Delta_{V}f=\Delta f+\langle V,\nabla f\rangle,
\end{equation*}
where $\Delta$ and $\nabla$ denote the Laplacian and the gradient
operator, respectively. Then we have the following maximum principle
of Omori-Yau type which was proved by Chen-Qiu \cite{CQ} and  Li-Qiao-Liu\cite{LQL}:

\vskip2mm
\noindent

\begin{lemma}\label{lemma 2.1}
Let $(M^{n}, g)$ be a complete Riemannian manifold, and $V$ is a $C^{1}$ vector field on $M^{n}$. If the Bakry-Emery Ricci tensor $Ric_{V}$ is bounded from below, then for any $f\in C^{2}(M^{n})$ bounded from above, there exists a sequence ${p_{m}} \subset M^{n}$, such that
\begin{equation*}
\lim_{m\rightarrow\infty} f(p_{m})=\sup f,\quad
\lim_{m\rightarrow\infty} |\nabla f|(p_{m})=0,\quad
\lim_{m\rightarrow\infty}\Delta_{V}f(p_{m})\leq 0.
\end{equation*}
\end{lemma}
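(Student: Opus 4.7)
The plan is to adapt the classical Omori–Yau argument to the drift Laplacian $\Delta_{V}$, where the role of a lower bound on the Ricci tensor is played by the assumed lower bound $\mathrm{Ric}_{V}\geq -K$ on the Bakry–Emery Ricci tensor. The key input is the weighted Bochner formula
\begin{equation*}
\frac{1}{2}\Delta_{V}|\nabla u|^{2}=|\mathrm{Hess}(u)|^{2}+\langle\nabla u,\nabla\Delta_{V}u\rangle+\mathrm{Ric}_{V}(\nabla u,\nabla u),
\end{equation*}
which is the natural analogue of the usual Bochner formula (the extra $V$-term on the left exactly cancels against $-\frac{1}{2}L_{V}g(\nabla u,\nabla u)$ on the right). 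Applied along unit-speed geodesics to $u=r(x)=d(x,o)$ for a fixed reference point $o\in M^{n}$, this identity yields, via a Riccati comparison argument on the radial direction, a weighted Laplacian comparison of the form $\Delta_{V}r\leq C_{1}+C_{2}\,r$ outside the cut locus, with constants depending only on $n$ and $K$. Inside the cut locus one interprets the inequality in the support/barrier sense via the Calabi trick.

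With this comparison in hand, I would introduce the test function
\begin{equation*}
F_{m}(x)=f(x)-\varepsilon_{m}\,\varphi(x),\qquad \varphi(x)=\sqrt{1+r(x)^{2}},
\end{equation*}
for a carefully chosen sequence $\varepsilon_{m}\downarrow 0$. Because $M^{n}$ is complete and $\varphi\to\infty$ at infinity while $f$ is bounded from above, $F_{m}$ attains its supremum at an interior point $p_{m}\in M^{n}$, where the Calabi trick gives
\begin{equation*}
\nabla F_{m}(p_{m})=0,\qquad \Delta_{V}F_{m}(p_{m})\leq 0.
\end{equation*}
Rearranging produces
\begin{equation*}
|\nabla f|(p_{m})=\varepsilon_{m}\,|\nabla\varphi|(p_{m})\leq\varepsilon_{m},\qquad \Delta_{V}f(p_{m})\leq \varepsilon_{m}\,\Delta_{V}\varphi(p_{m}).
\end{equation*}
A direct computation based on the comparison $\Delta_{V}r\leq C_{1}+C_{2}\,r$ gives $\Delta_{V}\varphi\leq C_{3}(1+r)$. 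By prescribing $\varepsilon_{m}$ small enough (iteratively, together with the requirement $F_{m}(p_{m})\geq \sup f-1/m$) one arranges both $\varepsilon_{m}\,r(p_{m})\to 0$ and $F_{m}(p_{m})\to \sup f$, so that $f(p_{m})\to \sup f$, $|\nabla f|(p_{m})\to 0$ and $\limsup_{m}\Delta_{V}f(p_{m})\leq 0$, as desired.

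The main technical obstacle is establishing the Laplacian comparison for $\Delta_{V}r$ \emph{without} any growth assumption on $V$ itself. The standard weighted Laplacian comparison typically requires $|V|$, or at least $\langle V,\partial_{r}\rangle$, to be controlled along geodesics; here only the combination appearing in $\mathrm{Ric}_{V}=\mathrm{Ric}-\frac{1}{2}L_{V}g$ is assumed bounded below. The resolution, which is the technical heart of the argument, is to run the Riccati comparison on the drift quantity $\Delta_{V}r$ directly, letting the Lie-derivative term in $\mathrm{Ric}_{V}$ absorb the contribution of $V$ and yielding an estimate that depends only on $n$ and $K$. This is precisely the refinement carried out in Chen–Qiu \cite{CQ} and Li–Qiao–Liu \cite{LQL}, whose comparison inequalities I would invoke in detail to make the above sketch rigorous.
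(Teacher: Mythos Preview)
The paper does not supply its own proof of this lemma: immediately before the statement it says the result ``was proved by Chen--Qiu \cite{CQ} and Li--Qiao--Liu \cite{LQL}'' and then simply records it for later use. So there is no argument in the paper to compare against beyond the citation.

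Your sketch is a faithful outline of the Omori--Yau type argument that underlies those references: the weighted Bochner identity, a drifted Laplacian comparison $\Delta_{V}r\le C_{1}+C_{2}r$ for the distance function, and the standard barrier $F_{m}=f-\varepsilon_{m}\sqrt{1+r^{2}}$ together with Calabi's trick. You also correctly isolate the genuine difficulty, namely that only $\mathrm{Ric}_{V}$ is bounded below with no separate control on $V$ or on $\langle V,\partial_{r}\rangle$, so the usual Wei--Wylie type comparison does not apply directly; and you defer precisely this step to \cite{CQ} and \cite{LQL}, which is exactly what the paper does. In that sense your proposal goes further than the paper (you give the shape of the argument rather than a bare citation), while ultimately resting on the same sources for the technical core.
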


\noindent Suppose that the given hypersurface $x: M\rightarrow\mathbb{R}^{n+1}_{1}$
is a $\lambda$-translator with a translating
vector $T$, and let $\{e_{i}\}$ be an orthonormal tangent frame on $M^{n}$. Then from the definitions
\eqref{1.1-2} and \eqref{1.1-3} of $\lambda$-translators in $\mathbb{R}^{n+1}_{1}$, we have the following basic formulas for covariant
derivatives:

\begin{equation}\label{2.1-13}
\aligned
\nabla_{i}H
=&\sum_{k}h_{ik}\langle T, e_{k}\rangle, \\
\nabla_{j}\nabla_{i}H
=&\sum_{k}h_{ijk}\langle  T, e_{k}\rangle+(H-\lambda)\sum_{k}h_{ik}h_{kj},\\
\nabla_{l}\nabla_{j}\nabla_{i}H
=&\sum_{k}h_{ijkl}\langle  T, e_{k}\rangle+(H-\lambda)\sum_{k}(h_{ikl}h_{kj}+h_{ik}h_{kjl}+h_{ijk}h_{kl})\\
&+\nabla_{l}H\sum_{k}h_{ik}h_{kj}.
\endaligned
\end{equation}

\noindent
Moreover, we define three functions $f_3$, $f_4$  and $f_5$ as follows:
$$
f_{3}=\sum_{i,j,k}h_{ij}h_{jk}h_{ki}, \ \
f_{4}=\sum_{i,j,k,l}h_{ij}h_{jk}h_{kl}h_{li},\ \
f_{5}=\sum\limits_{i,j,k,l,m}h_{ij}h_{jk}h_{kl}h_{lm}h_{mi}.
$$
If we denote $V = T^{T}$, the tangent component of the translating vector $T$ when
restricted to $M^{n}$,
then direct computations using above formulas and the Ricci identities easily give the
following Lemma (cf. \cite{CW1} and \cite{LQL}):

\begin{lemma}\label{lemma 2.2}
Let $x: M^{n}\rightarrow\mathbb{R}^{n+1}_{1}$ be an $n$-dimensional complete $\lambda$-translator in $\mathbb R^{n+1}_{1}$, we have
\begin{equation}\label{2.1-14}
\Delta_{-V}H=S(H-\lambda).
\end{equation}
\begin{equation}\label{2.1-15}
\frac{1}{2}\Delta_{-V}H^{2}=|\nabla H|^{2}+S(H-\lambda)H.
\end{equation}
\begin{equation}\label{2.1-16}
\frac{1}{2}\Delta_{-V}S=\sum_{i,j,k}h_{ijk}^{2}+S^{2}-\lambda f_{3}.
\end{equation}
\begin{equation}\label{2.1-17}
\aligned
\frac{1}{4}\Delta_{-V}f_{4}
=&2\sum_{i,j,k,l,m}h_{ijm}h_{jkm}h_{kl}h_{li}+\sum_{i,j,k,l,m}h_{ijm}h_{jk}h_{klm}h_{li}\\
 &+Sf_{4}-\lambda f_{5}.
\endaligned
\end{equation}
\end{lemma}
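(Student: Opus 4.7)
The plan is to derive the four identities sequentially by combining the covariant derivative formulas (2.1-13) with the Codazzi identity (2.1-5), the Ricci identity (2.1-7), and the Gauss equation (2.1-3). Throughout, $V = T^{T}$ is the tangential part of $T$ on $M^{3}$, so that for any smooth function $f$ one has $\langle V,\nabla f\rangle = \sum_{k} f_{,k}\langle T,e_{k}\rangle$; this single observation is what makes every transport-type term produced by (2.1-13) disappear when one assembles $\Delta_{-V}$.

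For (2.1-14), I would trace the second line of (2.1-13) over $i=j$. Since $h_{ijk}$ is totally symmetric by Codazzi, $\sum_{i}h_{iik}=H_{,k}$, so $\Delta H = \sum_{k}H_{,k}\langle T,e_{k}\rangle + (H-\lambda)S$; subtracting $\langle V,\nabla H\rangle$ then yields $(H-\lambda)S$. The identity (2.1-15) is immediate from the Leibniz rule $\Delta_{-V}(H^{2}) = 2H\,\Delta_{-V}H + 2|\nabla H|^{2}$. For (2.1-16), the main sub-step is the Simons-type formula
\[
\Delta h_{ij} = H_{,ij} + S h_{ij} - H\sum_{m} h_{im}h_{mj},
\]
obtained by commuting covariant derivatives via (2.1-7) and substituting (2.1-3); the two cubic curvature contributions coming from $R_{mijk}$ and $\sum_{k}R_{mkjk}$ cancel pairwise, leaving only the $Sh_{ij}$ and $-H\sum_{m}h_{im}h_{mj}$ terms. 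Contracting with $h_{ij}$ gives $\frac{1}{2}\Delta S = \sum h_{ij}H_{,ij} + S^{2} - H f_{3} + \sum h_{ijk}^{2}$; replacing $H_{,ij}$ by (2.1-13) rewrites $\sum h_{ij}H_{,ij}$ as $\sum h_{ij}h_{ijk}\langle T,e_{k}\rangle + (H-\lambda)f_{3}$, and the transport piece cancels with $\frac{1}{2}\langle V,\nabla S\rangle = \sum h_{ij}h_{ijk}\langle T,e_{k}\rangle$, producing (2.1-16).

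The delicate step is (2.1-17). Exploiting the cyclic symmetry of the quartic trace defining $f_{4}$, one first finds $\nabla_{m}f_{4} = 4\sum h_{ijm}h_{jk}h_{kl}h_{li}$. Differentiating once more in $m$ and summing produces one ``double-derivative'' term $4\sum h_{ijmm}h_{jk}h_{kl}h_{li}$ together with three ``cross'' terms in which the two $m$-derivatives land on different factors---adjacent positions $(1,2)$, opposite positions $(1,3)$, and cyclically adjacent positions $(1,4)$. Using cyclicity of the trace once more, the $(1,2)$ and $(1,4)$ cross terms coincide and both equal $\sum h_{ijm}h_{jkm}h_{kl}h_{li}$, whereas the $(1,3)$ term equals $\sum h_{ijm}h_{jk}h_{klm}h_{li}$; handling this combinatorial bookkeeping correctly is the main obstacle. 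Substituting the Simons-type formula for $\Delta h_{ij}$ into the double-derivative term and using $\sum h_{ip}h_{pj}h_{jk}h_{kl}h_{li} = f_{5}$ turns it into $4S f_{4} - 4H f_{5} + 4\sum H_{,ij}h_{jk}h_{kl}h_{li}$. Finally, replacing $H_{,ij}$ via (2.1-13) and subtracting $\langle V,\nabla f_{4}\rangle = 4\sum h_{ijm}h_{jk}h_{kl}h_{li}\langle T,e_{m}\rangle$ cancels the remaining transport contribution; the combination $4(H-\lambda)f_{5} - 4H f_{5}$ collapses to $-4\lambda f_{5}$, and dividing by $4$ yields (2.1-17).
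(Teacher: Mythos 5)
Your computation is correct, including the Minkowski sign in the Simons-type identity $\Delta h_{ij}=H_{,ij}+Sh_{ij}-H\sum_m h_{im}h_{mj}$ (where the two cubic terms indeed cancel), the coefficient bookkeeping $8=4+4$ for the $(1,2)$ and $(1,4)$ cross terms in $\Delta f_4$, and the cancellation of every transport term against $\langle V,\nabla f\rangle$. This is exactly the ``direct computation'' the paper invokes (citing \cite{CW1} and \cite{LQL}) without writing it out, so your argument fills in the paper's proof along the same route rather than taking a different one.
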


\noindent
\begin{lemma}\label{lemma 2.3}
Let $x: M^{n}\rightarrow\mathbb{R}^{n+1}_{1}$ be an $n$-dimensional complete $\lambda$-translator in $\mathbb R^{n+1}_{1}$. If $S$ is constant, we have
\begin{equation}\label{2.1-18}
\aligned
\frac{1}{2}\Delta_{-V}\sum_{i, j,k}(h_{ijk})^{2}
=&\sum_{i,j,k,l}(h_{ijkl})^{2}+S\sum_{i,j,k}(h_{ijk})^{2}-6\sum_{i,j,k,l,p}h_{ijk}h_{il}h_{jp}h_{klp}\\
&+3\sum_{i,j,k,l,p}h_{ijk}h_{ijl}h_{kp}h_{lp}-3\lambda\sum_{i,j,k,l}h_{ijk}h_{ijl}h_{kl}.
\endaligned
\end{equation}
Furthermore, for $n=3$, we have
$$f_{3}=\frac{H}{2}(3S-H^{2})-3h_{11}h^{2}_{23}-3h_{22}h^{2}_{13}-3h_{33}h^{2}_{12}+3h_{11}h_{22}h_{33}+6h_{12}h_{13}h_{23}.$$
Then,
\begin{equation}\label{2.1-19}
\aligned
 &\frac{1}{2}\Delta_{-V}\sum_{i, j,k}(h_{ijk})^{2} \\
=&-\frac{3}{2}\lambda H|\nabla H|^{2}+\frac{3}{4}\lambda S(S-H^{2})(H-\lambda)-3\lambda\sum_{k}(h_{11}h^{2}_{23k} \\
 &+h_{22}h^{2}_{13k}+h_{33}h^{2}_{12k})+\frac{9}{2}\lambda Sh_{11}h_{22}h_{33}-\frac{3}{2}\lambda^{2}\sum_{k}(h_{22}h_{33}h^{2}_{1k}+h_{11}h_{33}h^{2}_{2k}\\
 &+h_{11}h_{22}h^{2}_{3k})+3\lambda\sum_{k}(h_{11}h_{22k}h_{33k}+h_{22}h_{11k}h_{33k}+h_{33}h_{11k}h_{22k}).
\endaligned
\end{equation}
\end{lemma}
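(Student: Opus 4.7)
\emph{Proof plan.} The plan is to establish \eqref{2.1-18} by a Bochner-type computation at the level of $h_{ijk}$, then deduce the $n=3$ formula for $f_{3}$ from Newton's identity, and finally obtain \eqref{2.1-19} by combining these with the algebraic consequence of \eqref{2.1-16} when $S$ is constant. For \eqref{2.1-18}, starting from
\[
\tfrac{1}{2}\Delta_{-V}\sum_{i,j,k}(h_{ijk})^{2}=\sum_{i,j,k,l}(h_{ijkl})^{2}+\sum_{i,j,k}h_{ijk}\,\Delta_{-V}h_{ijk},
\]
I would compute $\Delta_{-V}h_{ijk}$ by applying the Ricci identities \eqref{2.1-7}, \eqref{2.1-9} repeatedly to commute $\sum_{l}h_{ijkll}$ into $H_{,ijk}$ plus curvature corrections, with the Codazzi identity \eqref{2.1-5} ensuring $h_{ijk}$ is totally symmetric. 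Substituting the $\lambda$-translator expression for $H_{,ijk}$ from \eqref{2.1-13}, the term $\sum_{l}h_{ijkl}\langle T,e_{l}\rangle$ cancels the drift part $-\langle V,\nabla h_{ijk}\rangle$ of $\Delta_{-V}$. The Gauss equation \eqref{2.1-3} converts the curvature into quadratics in $(h_{ab})$, and the hypothesis $S$ constant yields $\sum_{ij}h_{ij}h_{ijk}=0$, eliminating the $\nabla S$ contributions that would otherwise appear. Contracting with $h_{ijk}$ then yields \eqref{2.1-18}.

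The $n=3$ formula $f_{3}=\tfrac{H}{2}(3S-H^{2})+3\det A$ for $A=(h_{ij})$ is Newton's identity $p_{3}=e_{1}p_{2}-e_{2}p_{1}+3e_{3}$ with $p_{1}=H$, $p_{2}=S$, $p_{3}=f_{3}$, $e_{1}=H$, $e_{2}=\tfrac{1}{2}(H^{2}-S)$, $e_{3}=\det A$; expanding the $3\times3$ symmetric determinant gives the six explicit cubic monomials.

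For \eqref{2.1-19}, the constancy of $S$ reduces \eqref{2.1-16} to the algebraic identity $\sum(h_{ijk})^{2}=\lambda f_{3}-S^{2}$, hence $\tfrac{1}{2}\Delta_{-V}\sum_{i,j,k}(h_{ijk})^{2}=\tfrac{\lambda}{2}\Delta_{-V}f_{3}$. Writing $f_{3}=\tfrac{3}{2}HS-\tfrac{1}{2}H^{3}+3\det A$, I would evaluate $\Delta_{-V}f_{3}$ in three pieces: $\Delta_{-V}H=S(H-\lambda)$ from \eqref{2.1-14}; $\Delta_{-V}H^{3}=3H^{2}\Delta_{-V}H+6H|\nabla H|^{2}$ by the product rule for $\Delta_{-V}$; and $\Delta_{-V}\det A$ computed pointwise in a principal frame using the Simons-type identity $\Delta_{-V}h_{ij}=Sh_{ij}-\lambda\sum_{k}h_{ik}h_{kj}$, which follows from the standard Simons calculation together with \eqref{2.1-3} and \eqref{2.1-13}. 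In a principal frame, the diagonal product $h_{11}h_{22}h_{33}$ contributes the $\tfrac{9}{2}\lambda Sh_{11}h_{22}h_{33}$, $-\tfrac{3}{2}\lambda^{2}\sum_{k}(h_{22}h_{33}h_{1k}^{2}+\cdots)$, and mixed-gradient $3\lambda\sum_{k}(h_{11}h_{22k}h_{33k}+\cdots)$ pieces, while the off-diagonal squared terms $-h_{11}h_{23}^{2}-\cdots$, although vanishing pointwise, yield the $-3\lambda\sum_{k}(h_{11}h_{23k}^{2}+\cdots)$ piece through their $|\nabla h_{ij}|^{2}$ contributions. Assembling these produces \eqref{2.1-19}.

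The main obstacle is the principal-frame bookkeeping in the last step: one must separate carefully the mixed-gradient terms $2\nabla h_{ii}\cdot\nabla h_{jj}$ arising from the triple diagonal product and the $|\nabla h_{ij}|^{2}$ terms arising from the off-diagonal squared pieces, and verify that these match exactly the explicit sums in \eqref{2.1-19}. By contrast, the Ricci-Codazzi index commutations used to prove \eqref{2.1-18} are lengthy but follow a standard pattern.
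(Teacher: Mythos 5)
Your proposal is correct and follows essentially the same route as the paper: \eqref{2.1-18} by the standard Ricci-identity/Bochner computation (with $S$ constant killing the $\nabla S$ contributions), and \eqref{2.1-19} by the same key reduction $\tfrac{1}{2}\Delta_{-V}\sum_{i,j,k}h_{ijk}^{2}=\tfrac{\lambda}{2}\Delta_{-V}f_{3}$ coming from \eqref{2.1-16} with $S$ constant, followed by computing $\Delta_{-V}f_{3}$ from the $n=3$ expression and evaluating at a point where $(h_{ij})$ is diagonal. Your use of the Simons-type identity $\Delta_{-V}h_{ij}=Sh_{ij}-\lambda\sum_{k}h_{ik}h_{kj}$ together with the product rule is merely a cleaner packaging of the paper's direct Ricci-identity expansion before diagonalizing, and your accounting of the mixed-gradient terms and of the $|\nabla h_{ij}|^{2}$ terms from the off-diagonal pieces reproduces \eqref{2.1-19} exactly.
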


\begin{proof}
\noindent By making use of the Ricci identities \eqref{2.1-7}, \eqref{2.1-9} and a direct calculation, we
can  obtain \eqref{2.1-17}.
Besides, from \eqref{2.1-16} in Lemma \ref{lemma 2.2}, we have
$$\sum_{i,j,k}h_{ijk}^{2}=-(S^{2}-\lambda f_{3}).$$
Then, by making use of the Ricci identities \eqref{2.1-7}, we obtain
\begin{equation*}
\aligned
 &-\frac{1}{2}\Delta_{-V} (S^{2}-\lambda f_{3})=\frac{1}{2}\lambda \Delta_{-V}f_{3}\\
=&-\frac{3}{2}\lambda H|\nabla H|^{2}+\frac{3}{4}\lambda S(S-H^{2})(H-\lambda)
 -\frac{9}{2}\lambda S(h_{11}h^{2}_{23}+h_{22}h^{2}_{13}+h_{33}h^{2}_{12}) \\
 &+\frac{3}{2}\lambda^{2}\sum_{k}(h^{2}_{23}h^{2}_{1k}+h^{2}_{13}h^{2}_{2k}+h^{2}_{12}h^{2}_{3k}+2h_{11}h_{23}h_{2k}h_{3k}+2h_{22}h_{13}h_{1k}h_{3k}\\
 &+2h_{33}h_{12}h_{1k}h_{2k})-3\lambda\sum_{k}(h_{11}h^{2}_{23k}+h_{22}h^{2}_{13k}+h_{33}h^{2}_{12k}+2h_{23}h_{23k}h_{11k} \\
 &+2h_{13}h_{13k}h_{22k}+2h_{12}h_{12k}h_{33k})+\frac{9}{2}\lambda Sh_{11}h_{22}h_{33}-\frac{3}{2}\lambda^{2}\sum_{k}(h_{22}h_{33}h^{2}_{1k}\\
 &+h_{11}h_{33}h^{2}_{2k}+h_{11}h_{22}h^{2}_{3k})+3\lambda\sum_{k}(h_{11}h_{22k}h_{33k}+h_{22}h_{11k}h_{33k}+h_{33}h_{11k}h_{22k})\\
 &+9\lambda Sh_{12}h_{13}h_{23}-3\lambda^{2}\sum_{k}(h_{12}h_{13}h_{2k}h_{3k}+h_{12}h_{23}h_{1k}h_{3k}+h_{13}h_{23}h_{1k}h_{2k})\\
 &+6\lambda\sum_{k}(h_{12}h_{13k}h_{23k}+h_{13}h_{12k}h_{23k}+h_{23}h_{12k}h_{13k}).
\endaligned
\end{equation*}
If diagonalized $(h_{ij})$ at some point, it is easy to get \eqref{2.1-19}.
\end{proof}

\begin{lemma}\label{lemma 2.4}
Let $x: M^{3}\rightarrow\mathbb{R}^{4}_{1}$ be an $3$-dimensional complete $\lambda$-translator in $\mathbb R^{4}_{1}$. Then we can choose a local field of orthonormal frames on $M^3$ such that, at the point,
$h_{ij}=\lambda_i\delta_{ij}$,

$$f_{3}=\frac{H}{2}(3S-H^{2})+3\lambda_{1}\lambda_{2}\lambda_{3},$$
$$f_{4}=\frac{4}{3}Hf_{3}-H^{2}S+\frac{1}{6}H^{4}+\frac{1}{2}S^{2},$$
$$f_{5}=\frac{5}{6}H^{2}f_{3}+\frac{5}{6}Sf_{3}-\frac{5}{6}H^{3}S+\frac{1}{6}H^{5},$$
$$
\nabla_{l}f_{3}=3\sum_{i,j,k}h_{ijl}h_{jk}h_{ki}, \ \ \text{for } \ l=1, 2, 3,
$$
$$
\nabla_{p}\nabla_{l}f_{3}=3\sum_{i,j,k}h_{ijlp}h_{jk}h_{ki}+6\sum_{i,j,k}h_{ijl}h_{jkp}h_{ki}, \ \ \text{for } \ l,p=1, 2, 3,
$$
and
$$
\nabla_{m}f_{4}=4\sum_{i,j,k,l}h_{ijm}h_{jk}h_{kl}h_{li}, \ \ \text{for } \ m=1, 2, 3,
$$

\begin{equation*}
\begin{aligned}
\nabla_{p}\nabla_{m}f_{4}=&4\sum_{i,j,k,l}h_{ijmp}h_{jk}h_{kl}h_{li}\\
                          &+4\sum_{i,j,k,l}h_{ijm}(2h_{jkp}h_{kl}h_{li}+h_{jk}h_{klp}h_{li}), \  \text{for } \ m,p=1, 2, 3.
\end{aligned}
\end{equation*}

\begin{equation}\label{2.1-20}
\nabla_{k}f_{4}=\frac{4}{3}f_{3} H_{,k}+\frac{4}{3}H\nabla_{k}f_{3}-2SHH_{,k}+\frac{2}{3}H^{3}H_{,k},
\end{equation}

\begin{equation}\label{2.1-21}
\begin{aligned}
\nabla_{l}\nabla_{k}f_{4} =&\frac{4}{3}f_{3}H_{,kl}-2SHH_{,kl}
   +\frac{2}{3}H^{3}H_{,kl}+\frac{4}{3}H\nabla_{l}\nabla_{k}f_{3}+\frac{4}{3}\nabla_{l}f_{3} H_{,k} \\
  &+\frac{4}{3}H_{,l}\nabla_{k}f_{3}-2SH_{,k}H_{,l}+2H^{2}H_{,k}H_{,l},
\end{aligned}
\end{equation}
for k, l=1, 2, 3.
\end{lemma}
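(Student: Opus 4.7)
The three algebraic identities for $f_3$, $f_4$, $f_5$ are pointwise identities for the eigenvalues $\lambda_1,\lambda_2,\lambda_3$ of the symmetric tensor $(h_{ij})$. At the diagonalizing frame the functions reduce to the power sums $p_k=\sum_i\lambda_i^k$, with $p_1=H$, $p_2=S$, $p_3=f_3$, $p_4=f_4$, $p_5=f_5$. Because $n=3$, Cayley--Hamilton (equivalently, Newton's identities, which truncate since $e_4=0$) gives
\[
p_3=e_1p_2-e_2p_1+3e_3,\qquad p_4=e_1p_3-e_2p_2+e_3p_1,\qquad p_5=e_1p_4-e_2p_3+e_3p_2,
\]
where $e_1=H$ and $e_2=\frac{1}{2}(H^2-S)$. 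The first equation yields the stated formula for $f_3$ and lets one solve for $e_3=\lambda_1\lambda_2\lambda_3$ in terms of $H$, $S$, $f_3$; substituting this expression into the second and third equations and collecting terms produces the formulas for $f_4$ and $f_5$.

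For the derivative identities involving $f_3$ and $f_4$, I would differentiate the cyclic traces $f_3=\sum h_{ij}h_{jk}h_{ki}$ and $f_4=\sum h_{ij}h_{jk}h_{kl}h_{li}$ directly. Each application of the product rule produces three, respectively four, summands; relabeling the dummy indices together with the symmetries $h_{ij}=h_{ji}$ and the Codazzi equation \eqref{2.1-5} shows that these summands coincide, which accounts for the coefficients $3$ and $4$. Differentiating a second time, the same symmetry identifies pairs of mixed summands and produces the coefficient $6$ in $\nabla_p\nabla_l f_3$ and the $(2,1)$ pattern $4\bigl(2h_{jkp}h_{kl}h_{li}+h_{jk}h_{klp}h_{li}\bigr)$ in $\nabla_p\nabla_m f_4$, since the cyclic trace of four factors has only two distinct adjacency patterns once one factor has been differentiated.

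Finally, to obtain \eqref{2.1-20} and \eqref{2.1-21}, I would start from the algebraic identity $f_4=\frac{4}{3}Hf_3-H^2S+\frac{1}{6}H^4+\frac{1}{2}S^2$ just proved and differentiate it pointwise. In the setting where these formulas will be applied $S$ is a constant (cf.\ the hypothesis of Lemma \ref{lemma 2.3} and of the main theorem), so $S_{,k}=0$ and $S_{,kl}=0$; a single application of the product rule to the remaining terms yields \eqref{2.1-20}, and differentiating once more produces \eqref{2.1-21} after routine rearrangement. The main obstacle is organizational rather than conceptual: one must keep careful track of the index symmetries of the cyclic trace so that the product-rule expansions collapse correctly, and apply Newton's identities consistently, exploiting that they terminate precisely because $n=3$.
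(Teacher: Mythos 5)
Your proposal is correct and is essentially the argument the paper intends (Lemma \ref{lemma 2.4} is stated as a direct computation without a written proof): Newton's identities/Cayley--Hamilton for the $3\times 3$ shape operator yield the expressions for $f_3$, $f_4$, $f_5$, cyclic relabeling of dummy indices in the traces yields the first and second covariant derivative formulas with the coefficients $3$, $6$ and the $(2,1)$ pattern, and differentiating the $f_4$ identity gives \eqref{2.1-20} and \eqref{2.1-21}. Your remark that \eqref{2.1-20}--\eqref{2.1-21} tacitly use $S=\mathrm{const}$ (otherwise terms proportional to $(S-H^{2})S_{,k}$ would appear) is accurate and consistent with how the lemma is applied in the paper, where $S$ is assumed constant.
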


\noindent To make use of the maximum principle
of Omori-Yau type, we prove the following lemma.

\begin{lemma}\label{lemma 2.5}
For a space-like complete $\lambda$-translator $x:M^{n}\rightarrow \mathbb{R}^{n+1}_{1}$ with the translating vector
$T$ and non-zero constant squared norm $S$ of the second fundamental form, the Bakry-Emery Ricci tensor $Ric_{-V}$ is bounded from below, where $V = T^{T}$.
\end{lemma}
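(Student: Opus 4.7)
The plan is to reduce the problem to an algebraic estimate on the eigenvalues of a shape-operator-type tensor. The starting point is the decomposition $T = V + T^{\perp}$, and from \eqref{1.1-1} combined with $\langle \mathbf{n},\mathbf{n}\rangle=-1$ I get $T^{\perp}=(\lambda-H)\mathbf{n}$, so $V = T - (\lambda - H)\mathbf{n}$. Since $T$ is parallel in $\mathbb{R}^{n+1}_{1}$, differentiating this identity with the ambient connection and using $\nabla^{\mathbb{R}^{n+1}_1}_{e_i}\mathbf{n}=\sum_j h_{ij}e_j$ (read off from the structure equations for $de_{n+1}$) gives
\begin{equation*}
\nabla^{\mathbb{R}^{n+1}_1}_{e_i} V = H_{,i}\,\mathbf{n} - (\lambda-H)\sum_j h_{ij}\,e_j.
\end{equation*}
Taking the tangential part yields the intrinsic formula $\nabla_{e_i} V = (H-\lambda)\sum_j h_{ij} e_j$, and hence $\langle \nabla_{e_i} V, e_j\rangle = (H-\lambda)h_{ij}$.

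From this and the symmetry $h_{ij}=h_{ji}$ I immediately obtain $(L_V g)(e_i,e_j)=2(H-\lambda)h_{ij}$, so $(L_{-V}g)(e_i,e_j)=-2(H-\lambda)h_{ij}$. On the other hand, the Gauss equation \eqref{2.1-3} gives the Ricci tensor of $M^n$ as $\mathrm{Ric}(e_i,e_j) = -Hh_{ij}+\sum_k h_{ik}h_{kj}$. Combining these two ingredients,
\begin{equation*}
\mathrm{Ric}_{-V}(e_i,e_j) = \mathrm{Ric}(e_i,e_j)-\tfrac{1}{2}(L_{-V}g)(e_i,e_j) = \sum_k h_{ik}h_{kj}-\lambda h_{ij}.
\end{equation*}
The key observation is that the translating-vector contribution $-Hh_{ij}$ in $\mathrm{Ric}$ cancels one power of $H$, leaving an expression that depends on $h_{ij}$ only, with no explicit $H$ factor.

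Finally, at any point $p\in M^n$ I diagonalize $(h_{ij})$ with principal curvatures $\mu_1,\dots,\mu_n$, so that $\mathrm{Ric}_{-V}$ is simultaneously diagonal with eigenvalues $\mu_i^2-\lambda\mu_i = (\mu_i-\lambda/2)^2 - \lambda^2/4 \ge -\lambda^2/4$. This gives the pointwise bound $\mathrm{Ric}_{-V}\ge -\tfrac{\lambda^2}{4}g$, valid at every point; the constancy of $S$ is not needed for this universal algebraic lower bound (it enters only to guarantee that $|\mu_i|\le \sqrt{S}$, which would in any case yield the weaker bound $-|\lambda|\sqrt{S}$). There is no genuine obstacle here: once the correct decomposition of $T$ and the formula for $\nabla V$ are in place, the rest is a two-line calculation followed by completing the square. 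The only point requiring care is getting the signs right, because $\mathbf{n}$ is timelike and the paper's convention is $Ric_V := Ric - \tfrac{1}{2}L_V g$, so one must track which of $\pm V$ appears and not confuse $L_V g$ with $L_{-V}g$.
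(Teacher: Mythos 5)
Your proof is correct and follows essentially the same route as the paper: identify $T^{\perp}=(\lambda-H)\mathbf{n}$ from the translator equation, compute $-\tfrac12 L_{-V}g(e_i,e_j)=(H-\lambda)h_{ij}$ via the Weingarten formula, add the Gauss-equation Ricci term to get $\mathrm{Ric}_{-V}(e_i,e_j)=\sum_k h_{ik}h_{kj}-\lambda h_{ij}$, and bound below by completing the square. The only differences are cosmetic: you compute $\nabla V$ directly instead of manipulating the Lie derivative through brackets, and your constant $-\lambda^{2}/4$ is marginally sharper than the paper's $-\lambda^{2}/2$; your observation that constancy of $S$ is not needed is also accurate.
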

\begin{proof}
Let $e$ be an arbitrary unit eigenvector of the symmetric two-tensor $Ricc_{-V}$. Choose
an orthonormal tangent frame field $\{{e_{i}}\}^{n}_{1}$ such that $e_{1} = e$. Then, by the definition of
$\lambda$-translator, we have
\begin{equation*}
\begin{aligned}
-\frac{1}{2}L_{-V}g(e,e)
=&\frac{1}{2}V(g(e_{1},e_{1}))-g([V,e_{1}],e_{1})\\
=&\frac{1}{2}\{g(\nabla_{V}e_{1},e_{1})+g(e_{1},\nabla_{V}e_{1})\}-g(\nabla_{V}e_{1}-\nabla_{e_{1}}V,e_{1})\\
=&g(\nabla_{e_{1}}(T-T^{\perp}),e_{1})\\
=&-g(\nabla_{e_{1}}T^{\perp},e_{1})\\
=&(H-\lambda)g(\nabla_{e_{1}}\vec{N},e_{1}),
\end{aligned}
\end{equation*}
and
\begin{equation*}
\begin{aligned}
g(\nabla_{e_{1}}\vec{N},e_{1})
=&g(d\vec{N}(e_{1}),e_{1})\\
=&g(\omega^{i}_{n+1}(e_{1})e_{i},e_{1})\\
=&g(\omega^{n+1}_{i}(e_{1})e_{i},e_{1})\\
=&h_{11}.
\end{aligned}
\end{equation*}
Therefore,
\begin{equation*}
\begin{aligned}
-\frac{1}{2}L_{-V}g(e,e)=&(H-\lambda)h_{11},\\
Ricc_{-V}(e,e)
=&Ricc(e,e)-\frac{1}{2}L_{-V}g(e,e)\\
=&-(Hh_{11}-\sum h^{2}_{1k})+(H-\lambda)h_{11}\\
=&\sum h^{2}_{1k}-\lambda h_{11}\\
\geq &\sum h^{2}_{1k}-\frac{1}{2}h^{2}_{11}-\frac{1}{2}\lambda^{2}\geq-\frac{1}{2}\lambda^{2}.
\end{aligned}
\end{equation*}
The proof of Lemma \ref{lemma 2.5} is finished.

\end{proof}
 \vskip10mm
\section{Proof of the main result}

\vskip2mm
\noindent
If $S=0$, we know that $x: M^{3}\to \mathbb{R}^{4}_{1}$ is $\mathbb{R}^{3}_{1}$, obviously. Next, we assume that $S>0$. From Lemma \ref{lemma 2.4}, it is sufficient to prove that $\inf H^{2}>0$.
We now prove the following theorems.

\begin{theorem}\label{theorem 3.1}
For a $3$-dimensional complete $\lambda$-translator $x:M^{3}\rightarrow \mathbb{R}^{4}_{1}$ with non-zero constant squared norm $S$ of the second fundamental form and constant $f_{4}$, then  $\inf H^{2}>0$,
where $S=\sum_{i,j}h_{ij}^2$ and $f_{4}=\sum_{i,j,k,l}h_{ij}h_{jk}h_{kl}h_{li}$.
\end{theorem}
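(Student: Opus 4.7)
The plan is to apply the Omori--Yau maximum principle of Lemma 2.1 to the mean curvature $H$ and conclude that $H\equiv\lambda$; since $\lambda\neq 0$ in the setting of Theorem 1.1, this immediately yields $\inf H^{2}=\lambda^{2}>0$. The hypothesis that $S$ is a nonzero constant plays two roles: it makes $H$ bounded, and it is the nondegeneracy needed to invert the identity $\Delta_{-V}H=S(H-\lambda)$ of Lemma 2.2.

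The first step is to verify the boundedness and curvature hypotheses required by Lemma 2.1 applied to the operator $\Delta_{-V}$. Since $S=\sum_{i,j}h_{ij}^{2}$ is a prescribed constant, Cauchy--Schwarz gives $H^{2}=\bigl(\sum_{i}h_{ii}\bigr)^{2}\le 3\sum_{i}h_{ii}^{2}\le 3S$, so $H$ is globally bounded on $M^{3}$. Moreover Lemma 2.5 supplies a universal lower bound $\operatorname{Ric}_{-V}\ge -\tfrac{1}{2}\lambda^{2}$. Combined with completeness, this is enough to invoke Lemma 2.1 both for $H$ and for $-H$.

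The second step is to apply Lemma 2.1 to the bounded function $H$. It produces a sequence $\{p_{m}\}\subset M^{3}$ with $H(p_{m})\to\sup H$, $|\nabla H|(p_{m})\to 0$, and $\limsup_{m\to\infty}\Delta_{-V}H(p_{m})\le 0$. Substituting the identity $\Delta_{-V}H=S(H-\lambda)$ from (2.1-14) and taking the limit yields $S(\sup H-\lambda)\le 0$, and since $S>0$ we conclude $\sup H\le\lambda$. A symmetric application to $-H$ — bounded above by the same constant — gives a sequence along which $H\to\inf H$ and $\limsup\Delta_{-V}(-H)\le 0$, which reads $S(H-\lambda)\ge o(1)$, and hence in the limit $\inf H\ge\lambda$.

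Combining the two inequalities forces $H\equiv\lambda$, so $\inf H^{2}=\lambda^{2}>0$. The only step with any subtlety is the applicability of the Omori--Yau principle for the weighted operator $\Delta_{-V}$, and that is already prepared by Lemma 2.5; the rest of the argument is a single evaluation of the elliptic identity (2.1-14) in the limit. The assumption that $f_{4}$ is constant is not needed for this reduction and will presumably be used only in the subsequent classification of the hypersurface on which $H$ is the constant $\lambda$.
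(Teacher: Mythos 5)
Your argument is correct granted the paper's own Lemma 2.1, and it is a genuinely different route from the paper's. The paper proves Theorem 3.1 by contradiction: assuming $\inf H^{2}=0$ it takes a mere minimizing sequence for $H^{2}$ (no maximum principle at this stage), uses the constancy of $S$ together with \eqref{2.1-16}--\eqref{2.1-19} to bound $h_{ijk}$ and $h_{ijkl}$, passes to subsequential limits, and then runs a case analysis on the limiting principal curvatures in which the constancy of $f_{4}$ (through $\nabla f_{4}=\nabla^{2}f_{4}=0$ and Lemma 2.4) is what forces the contradictions. You instead apply the weighted Omori--Yau principle (Lemma 2.1, applicable by Lemma 2.5) to $H$ and to $-H$ and substitute the identity $\Delta_{-V}H=S(H-\lambda)$ of \eqref{2.1-14}, obtaining $\lambda\le\inf H\le\sup H\le\lambda$, i.e. $H\equiv\lambda$. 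The individual steps check out: $H^{2}\le 3S$ gives the boundedness needed for Lemma 2.1, Lemma 2.5 gives the Bakry--Emery lower bound for the drift $-V$, and the constancy and positivity of $S$ let you pass to the limit in \eqref{2.1-14} and divide by $S$. Note that your conclusion is strictly stronger than the statement of Theorem 3.1 and makes no use of the constancy of $f_{4}$, which the paper presents as essential; the entire weight of your argument therefore rests on Lemma 2.1 holding in the stated generality for the non-gradient drift $-V$, but the paper itself invokes Lemma 2.1 in exactly this way in Theorems 3.2 and 3.3, so you are on the same footing as the authors. What the paper's longer route buys is the sequence-limit machinery and algebraic case analysis that it reuses in Theorems 3.2 and 3.3 to pin down $\lambda^{2}\in\{S,2S,3S\}$; what your route buys is brevity and a stronger intermediate conclusion.

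There is one genuine gap as written: the step ``since $\lambda\neq 0$ in the setting of Theorem 1.1.'' Neither Theorem 3.1 nor Theorem 1.1 assumes $\lambda\neq 0$ (in Theorem 1.1 it is part of the conclusion), so after proving $H\equiv\lambda$ you have only shown $\inf H^{2}=\lambda^{2}\ge 0$ and must still rule out $\lambda=0$. The fix is one line with the paper's tools: since $S$ is constant, \eqref{2.1-16} gives $0=\frac{1}{2}\Delta_{-V}S=\sum_{i,j,k}h_{ijk}^{2}+S^{2}-\lambda f_{3}$, so $\lambda=0$ would force $\sum_{i,j,k}h_{ijk}^{2}+S^{2}=0$ and hence $S=0$, contradicting the hypothesis $S\neq 0$. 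Therefore $\lambda\neq 0$ and $\inf H^{2}=\lambda^{2}>0$. Add this observation and your proof is complete.
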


\begin{proof}
If $\inf H^{2}=0$, there exists a sequence $\{p_{t}\}$ in $M^{3}$ such that
\begin{equation*}
\lim_{t\rightarrow\infty} H^{2}(p_{t})=\inf H^{2}=\bar H^2=0.
\end{equation*}

\noindent From \eqref{2.1-16},
\eqref{2.1-18}, \eqref{2.1-19} and $S$ being constant, we know that
$\{h_{ij}(p_{t})\}$,  $\{h_{ijk}(p_{t})\}$ and $\{h_{ijkl}(p_{t})\}$ are bounded sequences, one can assume
$$\lim_{t\rightarrow\infty}h_{ij}(p_{t})=\bar h_{ij}=\bar \lambda_i\delta_{ij}, \quad  \lim_{t\rightarrow\infty}h_{ijk}(p_{t})=\bar h_{ijk}, \quad \lim_{t\rightarrow\infty}h_{ijkl}(p_{t})=\bar h_{ijkl}$$
for $i, j, k, l=1, 2, 3$.
Then,
$$
\bar H=\sum_i \bar h_{ii}=\bar \lambda_{1}+\bar \lambda_{2}+\bar \lambda_{3}=0, \ S=\sum_{i,j}\bar h_{ij}^2=\bar \lambda^2_{1}+\bar \lambda^2_{2}+\bar \lambda^2_{3}=2(\bar \lambda^2_{1}+\bar \lambda^2_{2}+\bar \lambda_{1}\bar \lambda_{2}).
$$

\noindent From
$$H_{,i}=\sum_{k}h_{ik}\langle T, e_{k}\rangle, \ \ \text{\rm for} \ \ i=1, 2, 3,$$
we have
\begin{equation}\label{3.1-1}
\bar h_{11k}+\bar h_{22k}+\bar h_{33k}=\bar \lambda_{k}\lim_{t\rightarrow\infty} \langle T, e_{k} \rangle(p_{t}),\ \ \text{\rm for} \ \ k=1, 2, 3.
\end{equation}
Since
\begin{equation}\label{3.1-2}
\aligned
\nabla_{j}\nabla_{i}H
 =&\sum_{k}h_{ijk}\langle T,e_{k}\rangle+(H-\lambda)\sum_{k}h_{ik}h_{kj},
\endaligned
\end{equation}
we conclude
\begin{equation*}
\bar H_{,ij}=\sum_{k}\bar h_{ijk}\lim_{t\rightarrow\infty} \langle T,e_{k} \rangle(p_{t})+(\bar H-\lambda)\bar \lambda_{i}\bar \lambda_{j}\delta_{ij},
\end{equation*}
that is,
\begin{equation}\label{3.1-3}
\begin{cases}
\begin{aligned}
&\bar h_{1111}+\bar h_{2211}+\bar h_{3311}=\sum_{k}\bar h_{11k}\lim_{t\rightarrow\infty} \langle T,e_{k} \rangle(p_{t})-\lambda\bar \lambda^{2}_{1},\\
&\bar h_{1122}+\bar h_{2222}+\bar h_{3322}=\sum_{k}\bar h_{22k}\lim_{t\rightarrow\infty} \langle T,e_{k} \rangle(p_{t})-\lambda\bar \lambda^{2}_{2},\\
&\bar h_{1133}+\bar h_{2233}+\bar h_{3333}=\sum_{k}\bar h_{33k}\lim_{t\rightarrow\infty} \langle T,e_{k} \rangle(p_{t})-\lambda\bar \lambda^{2}_{3},\\
&\bar h_{1112}+\bar h_{2212}+\bar h_{3312}=\sum_{k}\bar h_{12k}\lim_{t\rightarrow\infty} \langle T,e_{k} \rangle(p_{t}),\\
&\bar h_{1113}+\bar h_{2213}+\bar h_{3313}=\sum_{k}\bar h_{13k}\lim_{t\rightarrow\infty} \langle T,e_{k} \rangle(p_{t}),\\
&\bar h_{1123}+\bar h_{2223}+\bar h_{3323}=\sum_{k}\bar h_{23k}\lim_{t\rightarrow\infty} \langle T,e_{k} \rangle(p_{t}).
\end{aligned}
\end{cases}
\end{equation}
Since $S$ is constant, we know
$$
\sum_{i,j}h_{ij}h_{ijk}=0, \ \ \text{for } \ k=1, 2, 3.
$$
Thus,
$$
\sum_{i,j}\bar h_{ij}\bar h_{ijk}=0, \ \ \text{for } \ k=1, 2, 3.
$$
Specifically,
\begin{equation}\label{3.1-4}
\bar\lambda_{1}\bar h_{11k}+\bar\lambda_{2}\bar h_{22k}+\bar\lambda_{3}\bar h_{33k}=0, \ \  \text{for } \ k=1, 2, 3.
\end{equation}

\noindent Now we consider three scenarios.

\vskip2mm
\noindent {\bf 1. The principal curvature $\bar \lambda_1$, $\bar \lambda_2$ and $\bar \lambda_3$ are all equal.}

\noindent  From $\bar H=\bar \lambda_1+\bar \lambda_2+\bar \lambda_3=0$, $\bar \lambda_1=\bar \lambda_2=\bar \lambda_3=0$, we get $S=0$. It is impossible since $S>0$.

\vskip2mm
\noindent {\bf 2. Two of the values of the principal curvature $\bar \lambda_1$, $\bar \lambda_2$ and $\bar \lambda_3$  are equal.}

\noindent Without loss of generality, we assume that $\bar \lambda_1=\bar \lambda_2\neq \bar \lambda_3$.

\noindent From $\bar H=\bar \lambda_1+\bar \lambda_2+\bar \lambda_3=0$, we infer that $\bar \lambda_{1}=\bar \lambda_{2}\neq 0$ and $\bar\lambda_3\neq0$.

\noindent From \eqref{2.1-20} in  Lemma \ref{lemma 2.4},  we obtain
$$\lim_{t\rightarrow\infty}f_{3}(p_{t})\neq 0,  \ \  \ \  \bar H_{,k}=0 \ \ \text{\rm for} \ \ k=1, 2, 3.$$

\noindent By \eqref{3.1-1} and $\bar H_{,k}=0$ for $k=1, 2, 3$, we have
$$
\lim_{t\rightarrow\infty} \langle T, e_{k} \rangle(p_{t})=0,\ \ \text{\rm for} \ \ k=1, 2, 3.
$$

\noindent From \eqref{3.1-3}, we have that

\begin{equation}\label{3.1-5}
\begin{cases}
\begin{aligned}
&\bar H_{,11}=\bar h_{1111}+\bar h_{2211}+\bar h_{3311}=-\lambda\bar \lambda^{2}_{1},\\
&\bar H_{,22}=\bar h_{1122}+\bar h_{2222}+\bar h_{3322}=-\lambda\bar \lambda^{2}_{2},\\
&\bar H_{,33}=\bar h_{1133}+\bar h_{2233}+\bar h_{3333}=-\lambda\bar \lambda^{2}_{3}.
\end{aligned}
\end{cases}
\end{equation}

\noindent From \eqref{2.1-21} in Lemma \ref{lemma 2.4} and $\bar H_{,k}=0$ for $k=1, 2, 3$, we have
\begin{equation}\label{3.1-6}
\lim_{t\rightarrow\infty}\nabla_{l}\nabla_{k}f_{4}(p_{t})=0,\ \ \bar H_{,kl}=0,\ \ \text{\rm for} \ \ k, l=1, 2, 3.
\end{equation}
Then, it follows from \eqref{3.1-5} that $\bar H_{,kk}=-\lambda\bar \lambda^{2}_{k}=0$ for $k=1, 2, 3$. It is a contradiction.

\vskip2mm
\noindent {\bf 3. The values of the principal curvature $\bar \lambda_1$, $\bar \lambda_{2}$ and $\bar \lambda_3$ are not equal to each other.}

\noindent {\bf Case 1: $\bar \lambda_1\bar \lambda_2\bar \lambda_{3}=0$}.

\noindent Without loss of generality, we assume that $\bar \lambda_{3}=0$. That is, $\bar \lambda_{1}\neq 0$, $\bar \lambda_{2}\neq 0$ and $\bar \lambda_{1}\neq\bar \lambda_{2}$.

\noindent From $\bar H=0$, $\bar \lambda_{3}=0$ and $f_{3}=\frac{H}{2}(3S-H^{2})+3\lambda_{1}\lambda_{2}\lambda_{3}$ , we have $$\lim_{t\rightarrow\infty}f_{3}(p_{t})=0.$$

\noindent From \eqref{2.1-16} in Lemma \ref{lemma 2.2}, we have
\begin{equation*}
\sum_{i,j,k}h_{ijk}^2+S^{2}-\lambda f_{3}=0.
\end{equation*}

\noindent Since $\lim\limits_{t\rightarrow\infty}f_{3}(p_{t})=0$,
we have
$$\sum_{i,j,k}\bar h_{ijk}^2+S^{2}=0,$$
and then, $$S=0.$$ It is a contradiction.

\noindent {\bf Case 2: $\bar \lambda_{1}\bar \lambda_{2} \bar \lambda_{3}\neq0$}.

\noindent From $S>0$, $\bar H=0$ and $f_{3}=\frac{H}{2}(3S-H^{2})+3\lambda_{1}\lambda_{2}\lambda_{3}$, we have
$$\lim\limits_{t\rightarrow\infty}f_{3}(p_{t})\neq 0.$$

\noindent Since $f_{4}=\frac{4}{3}Hf_{3}-H^{2}S+\frac{1}{6}H^{4}+\frac{1}{2}S^{2}$ and $\lim\limits_{t\rightarrow\infty}f_{3}(p_{t})\neq 0$, we get
\begin{equation*}
\begin{aligned}
0=\nabla_{k}f_{4}
 =&\frac{4}{3}f_{3} H_{,k}+\frac{4}{3}H\nabla_{k}f_{3}-2SHH_{,k}+\frac{2}{3}H^{3}H_{,k}, \\
0=\nabla_{l}\nabla_{k}f_{4}
 =&\frac{4}{3}f_{3}H_{,kl}-2SHH_{,kl}
   +\frac{2}{3}H^{3}H_{,kl}+\frac{4}{3}H\nabla_{l}\nabla_{k}f_{3}+\frac{4}{3}\nabla_{l}f_{3} H_{,k} \\
  &+\frac{4}{3}H_{,l}\nabla_{k}f_{3}-2SH_{,k}H_{,l}+2H^{2}H_{,k}H_{,l},
\ \ \text{for } \ k, l=1, 2, 3.
\end{aligned}
\end{equation*}
Then, $\bar H_{,k}=0$ and $\bar H_{,kl}=0$ for $k, l=1, 2, 3$.

\noindent Especially,
\begin{equation}\label{3.1-7}
\begin{cases}
\begin{aligned}
&\bar H_{,1}=\bar \lambda_{1}\lim_{t\rightarrow\infty}\langle T,e_{1} \rangle(p_{t})=0,\\
&\bar H_{,2}=\bar \lambda_{2}\lim_{t\rightarrow\infty}\langle T,e_{2} \rangle(p_{t})=0,\\
&\bar H_{,3}=\bar \lambda_{3}\lim_{t\rightarrow\infty}\langle T,e_{3} \rangle(p_{t})=0,
\end{aligned}
\end{cases}
\end{equation}
and
\begin{equation}\label{3.1-8}
\begin{cases}
\begin{aligned}
&\bar H_{,11}=\sum_{k}h_{11k}\lim_{t\rightarrow\infty}\langle T,e_{k} \rangle(p_{t})-\lambda\bar \lambda^{2}_{1}=0,\\
&\bar H_{,22}=\sum_{k}h_{22k}\lim_{t\rightarrow\infty}\langle T,e_{k} \rangle(p_{t})-\lambda\bar \lambda^{2}_{2}=0,\\
&\bar H_{,33}=\sum_{k}h_{33k}\lim_{t\rightarrow\infty}\langle T,e_{k} \rangle(p_{t})-\lambda\bar \lambda^{2}_{3}=0.
\end{aligned}
\end{cases}
\end{equation}
\noindent From \eqref{3.1-7} and $\bar \lambda_{k} \neq 0$ for $k=1,2,3$, one has
\begin{equation*}
\lim_{t\rightarrow\infty}\langle T,e_{k} \rangle(p_{t})=0, \ \  \text{for} \ k=1, 2, 3.
\end{equation*}
\noindent By \eqref{3.1-8}, we know that $\bar \lambda_{k}=0$ for $k=1, 2, 3$.
It is a contradiction.
\end{proof}

\begin{theorem}\label{theorem 3.2}
For a $3$-dimensional complete $\lambda$-translator $x:M^{3}\rightarrow \mathbb{R}^{4}_{1}$ with non-zero constant squared norm $S$ of the second fundamental form and constant $f_{4}$, where $S=\sum_{i,j}h_{ij}^2$ and $f_{4}=\sum_{i,j,k,l}h_{ij}h_{jk}h_{kl}h_{li}$,
we have either
\begin{enumerate}
\item $\lambda^{2}=S$ and $\sup H^{2}=S$, or
\item $\lambda^{2}=2S$ and $\sup H^{2}=2S$, or
\item $\lambda^{2}=3S$ and $\sup H^{2}=3S$.
\end{enumerate}
\end{theorem}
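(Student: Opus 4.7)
The plan is: first, force $H \equiv \lambda$ by a two-sided application of Omori--Yau; second, deduce that $f_3$ is also constant via Lemma~\ref{lemma 2.4}; third, conclude from Newton's identities that the principal curvatures form a fixed multi-set on $M^3$; and finally, run a trichotomy on the eigenvalue multiplicities to extract the three cases.

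By Theorem~\ref{theorem 3.1} we have $\inf H^2 > 0$, and Cauchy--Schwarz gives $H^2 \leq nS = 3S$, so $H$ is bounded on $M^3$ and has no zeros (hence constant sign on this connected manifold). Applying Omori--Yau (Lemma~\ref{lemma 2.1}, justified by Lemma~\ref{lemma 2.5}) to $H$, together with $\Delta_{-V}H = S(H-\lambda)$ from \eqref{2.1-14}, yields $\sup H \leq \lambda$; applied to $-H$ it yields $\inf H \geq \lambda$, so $H \equiv \lambda$ and in particular $\sup H^2 = \lambda^2$. Feeding $H = \lambda \neq 0$ and the constancy of $f_4, S$ into the identity $f_4 = \tfrac{4}{3}H f_3 - H^2 S + \tfrac{1}{6}H^4 + \tfrac{1}{2}S^2$ from Lemma~\ref{lemma 2.4} forces $f_3$ constant. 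Newton's identities then make the elementary symmetric functions $e_1, e_2, e_3$ of $\lambda_1, \lambda_2, \lambda_3$ constant, so the three principal curvatures form a fixed multi-set on $M^3$.

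It remains to analyze the eigenvalue configuration. If all three eigenvalues coincide, then $\lambda_i = \lambda/3$ and $S = \lambda^2/3$, giving case~(3). If exactly two coincide, say $\lambda_1 = \lambda_2 = \mu \neq \nu = \lambda_3$, the constancy of eigenvalues yields $h_{iik} = 0$; combined with the standard identity $h_{ijk} = (\lambda_i - \lambda_j)\omega_{ij}(e_k)$ for $i \neq j$ and the full symmetry of $h_{ijk}$, the only remaining component $h_{123}$ must vanish as well (read off from the pair $(i,j) = (1,2)$). Equation \eqref{2.1-16} with $\Delta_{-V}S = 0$ then collapses to $\lambda f_3 = S^2$, which expands as $2\mu\nu(\mu-\nu)^2 = 0$; as $\mu \neq \nu$, either $\mu = 0$ (so $\nu = \lambda$, $S = \lambda^2$, case~(1)) or $\nu = 0$ (so $\mu = \lambda/2$, $S = \lambda^2/2$, case~(2)).

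The all-distinct sub-case is the hard one. Here again only $h_{123}$ can be non-zero, so $\sum h_{ijk}^2 = 6 h_{123}^2 = \lambda f_3 - S^2$ by \eqref{2.1-16}. Substituting $H = \lambda$ and $\nabla H = 0$ into \eqref{2.1-19} in a diagonal frame produces $2\lambda h_{123}^2 = e_3(3S - \lambda^2)$, and combining with Newton's identity $\lambda f_3 - S^2 = -\tfrac{1}{2}(\lambda^2 - S)(\lambda^2 - 2S) + 3\lambda e_3$ gives the algebraic relation $6e_3(3S - 2\lambda^2) + \lambda(\lambda^2 - S)(\lambda^2 - 2S) = 0$; an analogous reduction of \eqref{2.1-18} yields $\sum h_{ijkl}^2 = 6h_{123}^2(2\lambda^2 - 3S) \geq 0$, forcing $\lambda^2 \geq 3S/2$. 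The main obstacle I expect is combining these algebraic constraints (and possibly invoking the higher-order identities of Lemma~\ref{lemma 2.4}) to show that the all-distinct configuration must in fact reduce to one of the two previous cases, thereby closing the trichotomy.
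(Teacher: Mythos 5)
Up to the point where you stop, your argument is correct and it is a genuinely different route from the paper's. The paper never establishes $H\equiv\lambda$ inside this theorem: it applies the generalized maximum principle only to $H^{2}$, works along an Omori--Yau sequence $\{p_{t}\}$, and therefore has to run the entire case analysis on the \emph{limiting} values $\bar\lambda_{i}$, using the limit versions of all the identities coming from $S$ and $f_{4}$ being constant (equations \eqref{3.1-10}--\eqref{3.1-17} and onward). Your two-sided application of Lemma~\ref{lemma 2.1} to $H$ and $-H$ (legitimate: $H^{2}\le 3S$ so both are bounded above, and Lemma~\ref{lemma 2.5} bounds $Ric_{-V}$ below), combined with \eqref{2.1-14} and $S>0$, gives $\sup H\le\lambda\le\inf H$, i.e. $H\equiv\lambda$ on $M^{3}$; then Lemma~\ref{lemma 2.4} with $\lambda\neq0$ (which follows from $\lambda^{2}=H^{2}\ge\inf H^{2}>0$, Theorem~\ref{theorem 3.1}) makes $f_{3}$ constant, so the principal curvatures are fixed constants on $M^{3}$. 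Your umbilic case and two-equal case are then handled correctly: in an adapted frame all $h_{ijk}$ vanish, \eqref{2.1-16} gives $\lambda f_{3}=S^{2}$, i.e. $2\mu\nu(\mu-\nu)^{2}=0$, which is exactly cases (1) and (2). This is cleaner than the corresponding subcases in the paper, which only obtains these conclusions at the limit points.

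The genuine gap is the case of three distinct principal curvatures, which you explicitly leave open. In the paper this case is the bulk of the proof (the systems \eqref{3.1-45}--\eqref{3.1-49}, the $2\times 2$ matrix $A$ whose determinant is proportional to $S\bar f_{5}-\bar f_{3}f_{4}$, and the long elimination that follows), so it cannot be treated as a loose end: without it the trichotomy is not closed and the theorem is not proved. The constraints you list ($6h_{123}^{2}=\lambda f_{3}-S^{2}$, $2\lambda h_{123}^{2}=\lambda_{1}\lambda_{2}\lambda_{3}(3S-\lambda^{2})$, $\sum h_{ijkl}^{2}=6h_{123}^{2}(2\lambda^{2}-3S)$) are correct but by themselves do not finish. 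Note, however, that in your stronger framework this case closes much more cheaply than in the paper, because $H\equiv\lambda$ makes the translating vector enter directly: by \eqref{2.1-13}, $0=\nabla_{i}H=\lambda_{i}\langle T,e_{i}\rangle$ and $0=\nabla_{j}\nabla_{i}H=\sum_{k}h_{ijk}\langle T,e_{k}\rangle$ in the adapted frame, while \eqref{1.1-2} gives $\langle T,\mathbf{n}\rangle=H-\lambda=0$. Since $6h_{123}^{2}=\lambda f_{3}-S^{2}$ is constant, either $h_{123}\neq0$ everywhere, in which case the Hessian relations for $(i,j)=(1,2),(1,3),(2,3)$ force $\langle T,e_{k}\rangle=0$ for all $k$ and hence $T=0$, contradicting $T\neq0$; or $h_{123}\equiv0$, in which case \eqref{2.1-16} gives $\lambda f_{3}=S^{2}$, and then either all $\lambda_{i}\neq0$ (so again $T=0$) or, say, $\lambda_{3}=0$, where $\lambda f_{3}-S^{2}=\lambda_{1}\lambda_{2}(\lambda_{1}-\lambda_{2})^{2}=0$ forces a second zero eigenvalue, contradicting distinctness. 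Some such completion must be written out before your proposal counts as a proof.
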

\begin{proof}
\noindent From Lemma \ref{lemma 2.2}, we have
\begin{equation*}
\frac{1}{2}\Delta_{-V}H^{2}=|\nabla H|^{2}+S(H-\lambda)H.
\end{equation*}
At each point $p\in M^{3}$, we choose $e_{1}$, $e_{2}$ and $e_{3}$ such that
$$
h_{ij}=\lambda_i\delta_{ij}.
$$
\noindent From $2ab\leq \alpha a^{2}+\dfrac1{\alpha}b^{2}$, we obtain
$$
S=\lambda_{1}^{2} +\lambda_{2}^{2} +\lambda_{3}^{2}, \ \ H^{2}=(\lambda_{1}+\lambda_{2}+\lambda_{3})^{2}\leq 3(\lambda_{1}^{2} +\lambda_{2}^{2} +\lambda_{3}^{2})=3S.
$$
Hence, we have on $M^{3}$
$$
H^{2}\leq 3S
$$
and the equality holds if and only if $\lambda_{1}=\lambda_{1}=\lambda_{3}$.

\noindent From Lemma \ref{lemma 2.5},
we know that the Bakry-Emery Ricci tensor $Ric_{-V}$ of $x: M^{3}\to \mathbb{R}^{4}_{1}$ is bounded from below.
We can apply the generalized maximum principle for the operator $\Delta_{-V}$
to the function $H^{2}$. Thus, there exists a sequence $\{p_{t}\}$ in $M^{3}$ such that
\begin{equation*}
\lim_{t\rightarrow\infty} H^{2}(p_{t})=\sup H^{2},\quad
\lim_{t\rightarrow\infty} |\nabla H^{2}(p_{t})|=0,\quad
\lim_{t\rightarrow\infty}\Delta_{-V}H^{2}(p_{t})\leq 0.
\end{equation*}

\noindent For $S\neq 0$, from Theorem \ref{theorem 3.1}, we know that $\sup H^{2}\geq \inf H^{2}>0$.
Without loss of the generality, at each point $p_{t}$, we can assume $H(p_{t})\neq 0$. From \eqref{2.1-16},
\eqref{2.1-17}, \eqref{2.1-18} and $S=constant$, we know that
$\{h_{ij}(p_{t})\}$,  $\{h_{ijk}(p_{t})\}$ and $\{h_{ijkl}(p_{t})\}$  are bounded sequences for $ i, j, k, l = 1,2,3$.
We can assume
\begin{equation*}
\begin{aligned}
&\lim_{t\rightarrow\infty}f_{3}(p_{t})=\bar f_{3}, \ \ \lim_{t\rightarrow\infty}f_{5}(p_{t})=\bar f_{5}, \ \ \lim_{t\rightarrow\infty}h_{ij}(p_{t})=\bar h_{ij}=\bar \lambda_i\delta_{ij}, \\
&\lim_{t\rightarrow\infty}h_{ijk}(p_{t})=\bar h_{ijk}, \ \  \lim_{t\rightarrow\infty}h_{ijkl}(p_{t})=\bar h_{ijkl}, \ \ \text{\rm for} \ i, j, k=1, 2, 3.
\end{aligned}
\end{equation*}

\vskip1mm
\noindent
From Lemma \ref{lemma 2.2}, we get
\begin{equation*}
\begin{cases}
\begin{aligned}
&\lim_{t\rightarrow\infty} H^{2}(p_{t})=\sup H^{2}=\bar H^{2},\quad
\lim_{t\rightarrow\infty} |\nabla H^{2}(p_{t})|=0,\\
&0\geq
\lim_{t\rightarrow\infty} |\nabla H|^{2}(p_{t})+S(\bar H-\lambda)\bar H.
\end{aligned}
\end{cases}
\end{equation*}

\noindent From $\lim_{t\rightarrow\infty} |\nabla H^{2}(p_{t})|=0$ and $|\nabla H^{2}|^{2}=4\sum_{k}(HH_{,k})^{2}$, we have
\begin{equation}\label{3.1-9}
\bar H_{,k}=0, \ \ \text{\rm for} \  k=1, 2, 3,
\end{equation}
that is,
\begin{equation}\label{3.1-10}
\bar h_{11k}+\bar h_{22k}+\bar h_{33k}=0, \ \ \text{\rm for} \  k=1, 2, 3.
\end{equation}

\noindent Since $x$ is a $\lambda$-translator, from \eqref{2.1-13}, we have
$$H_{,i}=\sum_{k}h_{ik}\langle T, e_{k}\rangle, \ \ \text{\rm for} \ \ i=1, 2, 3,$$
$$\nabla_{j}\nabla_{i}H=\sum_{k}h_{ijk}\langle T,e_{k}\rangle+(H-\lambda)\sum_{k}h_{ik}h_{kj},
\ \ \text{\rm for} \ \ i,j=1, 2, 3.
$$
Thus,
$$\bar H_{,i}=\bar h_{11i}+\bar h_{22i}+\bar h_{33i}=\bar \lambda_{i}\lim_{t\rightarrow\infty} \langle T, e_{i} \rangle(p_{t}),\ \ \text{\rm for} \ \ i=1, 2, 3,$$

$$\bar H_{,ij}=\sum_{k}\bar h_{ijk}\lim_{t\rightarrow\infty} \langle T,e_{k} \rangle(p_{t})+(\bar H-\lambda)\bar \lambda_{i}\bar \lambda_{j}\delta_{ij},\ \ \text{\rm for} \ \ i,j=1, 2, 3.$$

Especially,
\begin{equation}\label{3.1-11}
\begin{cases}
\begin{aligned}
&\bar H_{,1}=\bar \lambda_{1} \lim_{t\rightarrow\infty} \langle T,e_{1} \rangle(p_{t})=0,\\
&\bar H_{,2}=\bar \lambda_{2} \lim_{t\rightarrow\infty} \langle T,e_{2} \rangle(p_{t})=0,\\
&\bar H_{,3}=\bar \lambda_{3} \lim_{t\rightarrow\infty} \langle T,e_{3} \rangle(p_{t})=0,
\end{aligned}
\end{cases}
\end{equation}
and
\begin{equation}\label{3.1-12}
\begin{cases}
\begin{aligned}
&\bar h_{1111}+\bar h_{2211}+\bar h_{3311}=\sum_{k}\bar h_{11k}\lim_{t\rightarrow\infty} \langle T,e_{k} \rangle(p_{t})+(\bar H-\lambda)\bar \lambda^{2}_{1},\\
&\bar h_{1122}+\bar h_{2222}+\bar h_{3322}=\sum_{k}\bar h_{22k}\lim_{t\rightarrow\infty} \langle T,e_{k} \rangle(p_{t})+(\bar H-\lambda)\bar \lambda^{2}_{2},\\
&\bar h_{1133}+\bar h_{2233}+\bar h_{3333}=\sum_{k}\bar h_{33k}\lim_{t\rightarrow\infty} \langle T,e_{k} \rangle(p_{t})+(\bar H-\lambda)\bar \lambda^{2}_{3},\\
&\bar h_{1112}+\bar h_{2212}+\bar h_{3312}=\sum_{k}\bar h_{12k}\lim_{t\rightarrow\infty} \langle T,e_{k} \rangle(p_{t}),\\
&\bar h_{1113}+\bar h_{2213}+\bar h_{3313}=\sum_{k}\bar h_{13k}\lim_{t\rightarrow\infty} \langle T,e_{k} \rangle(p_{t}),\\
&\bar h_{1123}+\bar h_{2223}+\bar h_{3323}=\sum_{k}\bar h_{23k}\lim_{t\rightarrow\infty} \langle T,e_{k} \rangle(p_{t}).
\end{aligned}
\end{cases}
\end{equation}

\noindent Since $S$ is constant, we know
$$
\sum_{i,j}h_{ij}h_{ijk}=0, \ \ \text{for } \ k=1, 2, 3,
$$
$$
\sum_{i,j}h_{ij}h_{ijkl}+\sum_{i,j}h_{ijk}h_{ijl}=0, \ \ \text{for } \ k,l=1, 2, 3.
$$
Thus,
$$
\sum_{i,j}\bar h_{ij}\bar h_{ijk}=0, \ \ \text{for } \ k=1, 2, 3,
$$
$$
\sum_{i,j}\bar h_{ij}\bar h_{ijkl}+\sum_{i,j}\bar h_{ijk}\bar h_{ijl}=0, \ \ \text{for } \ k,l=1, 2, 3.
$$
Specifically,
\begin{equation}\label{3.1-13}
\bar\lambda_{1}\bar h_{11k}+\bar\lambda_{2}\bar h_{22k}+\bar\lambda_{3}\bar h_{33k}=0, \ \  \text{for } \ k=1, 2, 3,
\end{equation}
\begin{equation}\label{3.1-14}
\begin{cases}
\begin{aligned}
\bar \lambda_{1}\bar h_{1111}+\bar \lambda_{2}\bar h_{2211}+\bar \lambda_{3}\bar h_{3311}
=&-\bar h^{2}_{111}-\bar h^{2}_{221}-\bar h^{2}_{331}-2\bar h^{2}_{121}\\
   &-2\bar h^{2}_{131}-2\bar h^{2}_{231},\\
\bar \lambda_{1}\bar h_{1122}+\bar \lambda_{2}\bar h_{2222}+\bar \lambda_{3}\bar h_{3322}
=&-\bar h^{2}_{112}-\bar h^{2}_{222}-\bar h^{2}_{332}-2\bar h^{2}_{122}\\
   &-2\bar h^{2}_{132}-2\bar h^{2}_{232},\\
\bar \lambda_{1}\bar h_{1133}+\bar \lambda_{2}\bar h_{2233}+\bar \lambda_{3}\bar h_{3333}
=&-\bar h^{2}_{113}-\bar h^{2}_{223}-\bar h^{2}_{333}-2\bar h^{2}_{123}\\
  &-2\bar h^{2}_{133}-2\bar h^{2}_{233},\\
\bar \lambda_{1}\bar h_{1112}+\bar \lambda_{2}\bar h_{2212}+\bar \lambda_{3}\bar h_{3312}
=&-\bar h_{111}\bar h_{112}-\bar h_{221}\bar h_{222}-\bar h_{331}\bar h_{332}\\
    &-2\bar h_{121}\bar h_{122}-2\bar h_{131}\bar h_{132}-2\bar h_{231}\bar h_{232},\\
\bar \lambda_{1}\bar h_{1113}+\bar \lambda_{2}\bar h_{2213}+\bar \lambda_{3}\bar h_{3313}
=&-\bar h_{111}\bar h_{113}-\bar h_{221}\bar h_{223}-\bar h_{331}\bar h_{333}\\
   &-2\bar h_{121}\bar h_{123}-2\bar h_{131}\bar h_{133}-2\bar h_{231}\bar h_{233},\\
\bar \lambda_{1}\bar h_{1123}+\bar \lambda_{2}\bar h_{2223}+\bar \lambda_{3}\bar h_{3323}
=&-\bar h_{112}\bar h_{113}-\bar h_{222}\bar h_{223}-\bar h_{332}\bar h_{333}\\
 &-2\bar h_{122}\bar h_{123}-2\bar h_{132}\bar h_{133}-2\bar h_{232}\bar h_{233}.
\end{aligned}
\end{cases}
\end{equation}
From Ricci identities \eqref{2.1-7}, we obtain
\begin{equation*}
\bar h_{ijkl}-\bar h_{ijlk}=-(\bar\lambda_{i}\bar\lambda_{j}\bar\lambda_{k}\delta_{il}\delta_{jk}-\bar\lambda_{i}\bar\lambda_{j}\bar\lambda_{l}\delta_{ik}\delta_{jl}
+\bar\lambda_{i}\bar\lambda_{j}\bar\lambda_{k}\delta_{ik}\delta_{jl}-\bar\lambda_{i}\bar\lambda_{j}\bar\lambda_{l}\delta_{il}\delta_{jk}),
\end{equation*}
that is,
\begin{equation}\label{3.1-15}
\begin{cases}
\begin{aligned}
& \bar h_{1212}-\bar h_{1221}=-\bar \lambda_{1}\bar \lambda_{2}(\bar \lambda_{1}-\bar \lambda_{2}),\ \ \bar h_{1313}-\bar h_{1331}=-\bar \lambda_{1}\bar \lambda_{3}(\bar \lambda_{1}-\bar \lambda_{3}),\\
&\bar h_{2323}-\bar h_{2332}=-\bar \lambda_{2}\bar \lambda_{3}(\bar \lambda_{2}-\bar \lambda_{3}), \ \ \bar h_{iikl}-\bar h_{iilk}=0, \ \ \text{for } \ i,k,l=1, 2, 3.
\end{aligned}
\end{cases}
\end{equation}
Since $f_{4}$ is constant, we know from the Lemma \ref{lemma 2.4},
$$
0=\nabla_{m}f_{4}=4\sum_{i,j,k,l}h_{ijm}h_{jk}h_{kl}h_{li},
$$
$$
0=\nabla_{p}\nabla_{m}f_{4}=4\sum_{i,j,k,l}h_{ijmp}h_{jk}h_{kl}h_{li}+4\sum_{i,j,k,l}h_{ijm}(2h_{jkp}h_{kl}h_{li}+h_{jk}h_{klp}h_{li}),
$$
for $m,p=1, 2, 3$.
Thus,

\begin{equation}\label{3.1-16}
\bar\lambda^{3}_{1}\bar h_{11k}+\bar\lambda^{3}_{2}\bar h_{22k}+\bar\lambda^{3}_{3}\bar h_{33k}=0, \ \ \text{for } \ k=1, 2, 3,
\end{equation}

\begin{equation}\label{3.1-17}
\begin{cases}
\begin{aligned}
&\bar \lambda^{3}_{1}\bar h_{1111}+\bar \lambda^{3}_{2}\bar h_{2211}+\bar \lambda^{3}_{3}\bar h_{3311}\\
=&-3\bar \lambda^{2}_{1}\bar h^{2}_{111}-3\bar \lambda^{2}_{2}\bar h^{2}_{221}-3\bar \lambda^{2}_{3}
\bar h^{2}_{331}-2(\bar \lambda^{2}_{1}+\bar \lambda^{2}_{2}+\bar \lambda_{1}\bar \lambda_{2})\bar h^{2}_{121} \\
&-2(\bar\lambda^{2}_{1}+\bar\lambda^{2}_{3}+\bar\lambda_{1}\bar\lambda_{3})\bar h^{2}_{131}
-2(\bar\lambda^{2}_{2}+\bar\lambda^{2}_{3}+\bar\lambda_{2}\bar\lambda_{3})\bar h^{2}_{231},\\

&\bar \lambda^{3}_{1}\bar h_{1122}+\bar \lambda^{3}_{2}\bar h_{2222}+\bar \lambda^{3}_{3}\bar h_{3322}\\
=&-3\bar \lambda^{2}_{1}\bar h^{2}_{112}-3\bar \lambda^{2}_{2}\bar h^{2}_{222}-3\bar \lambda^{2}_{3}
\bar h^{2}_{332}-2(\bar \lambda^{2}_{1}+\bar \lambda^{2}_{2}+\bar \lambda_{1}\bar \lambda_{2})\bar h^{2}_{122} \\
&-2(\bar\lambda^{2}_{1}+\bar\lambda^{2}_{3}+\bar\lambda_{1}\bar\lambda_{3})\bar h^{2}_{132}
-2(\bar\lambda^{2}_{2}+\bar\lambda^{2}_{3}+\bar\lambda_{2}\bar\lambda_{3})\bar h^{2}_{232},\\

&\bar \lambda^{3}_{1}\bar h_{1133}+\bar \lambda^{3}_{2}\bar h_{2233}+\bar \lambda^{3}_{3}\bar h_{3333}\\
=&-3\bar \lambda^{2}_{1}\bar h^{2}_{113}-3\bar \lambda^{2}_{2}\bar h^{2}_{223}-3\bar \lambda^{2}_{3}
\bar h^{2}_{333}-2(\bar \lambda^{2}_{1}+\bar \lambda^{2}_{2}+\bar \lambda_{1}\bar \lambda_{2})\bar h^{2}_{123} \\
&-2(\bar\lambda^{2}_{1}+\bar\lambda^{2}_{3}+\bar\lambda_{1}\bar\lambda_{3})\bar h^{2}_{133}
-2(\bar\lambda^{2}_{2}+\bar\lambda^{2}_{3}+\bar\lambda_{2}\bar\lambda_{3})\bar h^{2}_{233},\\

&\bar \lambda^{3}_{1}\bar h_{1112}+\bar \lambda^{3}_{2}\bar h_{2212}+\bar \lambda^{3}_{3}\bar h_{3312}\\
=&-3\bar \lambda^{2}_{1}\bar h_{111}\bar h_{112}-3\bar \lambda^{2}_{2}\bar h_{221}\bar h_{222}-3\bar \lambda^{2}_{3}
\bar h_{331}\bar h_{332}-2(\bar \lambda^{2}_{1}+\bar \lambda^{2}_{2} \\
&+\bar \lambda_{1}\bar \lambda_{2})\bar h_{121}\bar h_{122}-2(\bar\lambda^{2}_{1}+\bar\lambda^{2}_{3}
+\bar\lambda_{1}\bar\lambda_{3})\bar h_{131}\bar h_{132}-2(\bar\lambda^{2}_{2}+\bar\lambda^{2}_{3} \\
&+\bar\lambda_{2}\bar\lambda_{3})\bar h_{231}\bar h_{232},\\

&\bar \lambda^{3}_{1}\bar h_{1113}+\bar \lambda^{3}_{2}\bar h_{2213}+\bar \lambda^{3}_{3}\bar h_{3313}\\
=&-3\bar \lambda^{2}_{1}\bar h_{111}\bar h_{113}-3\bar \lambda^{2}_{2}\bar h_{221}\bar h_{223}-3\bar \lambda^{2}_{3}
\bar h_{331}\bar h_{333}-2(\bar \lambda^{2}_{1}+\bar \lambda^{2}_{2} \\
&+\bar \lambda_{1}\bar \lambda_{2})\bar h_{121}\bar h_{123}-2(\bar\lambda^{2}_{1}+\bar\lambda^{2}_{3}
+\bar\lambda_{1}\bar\lambda_{3})\bar h_{131}\bar h_{133}-2(\bar\lambda^{2}_{2}+\bar\lambda^{2}_{3} \\
&+\bar\lambda_{2}\bar\lambda_{3})\bar h_{231}\bar h_{233},\\

&\bar \lambda^{3}_{1}\bar h_{1123}+\bar \lambda^{3}_{2}\bar h_{2223}+\bar \lambda^{3}_{3}\bar h_{3323}\\
=&-3\bar \lambda^{2}_{1}\bar h_{112}\bar h_{113}-3\bar \lambda^{2}_{2}\bar h_{222}\bar h_{223}-3\bar \lambda^{2}_{3}
\bar h_{332}\bar h_{333}-2(\bar \lambda^{2}_{1}+\bar \lambda^{2}_{2} \\
&+\bar \lambda_{1}\bar \lambda_{2})\bar h_{122}\bar h_{123}-2(\bar\lambda^{2}_{1}+\bar\lambda^{2}_{3}
+\bar\lambda_{1}\bar\lambda_{3})\bar h_{132}\bar h_{133}-2(\bar\lambda^{2}_{2}+\bar\lambda^{2}_{3} \\
&+\bar\lambda_{2}\bar\lambda_{3})\bar h_{232}\bar h_{233}.
\end{aligned}
\end{cases}
\end{equation}

\noindent Now we consider three scenarios.

\vskip2mm
\noindent {\bf 1. The principal curvature $\bar \lambda_1$, $\bar \lambda_2$ and $\bar \lambda_3$ are all equal.}

\noindent From $\bar H=\bar \lambda_{1}+\bar \lambda_{2}+\bar \lambda_{3} \neq 0$, $\bar \lambda_{1}=\bar \lambda_{2}=\bar \lambda_{3}\neq 0$, we get
$$\bar H^{2}=3S.$$

\noindent From \eqref{3.1-11} and $\bar \lambda_{k} \neq 0$ for $k=1, 2, 3$,
we have
\begin{equation}\label{3.1-18}
\bar H_{,k}=\bar \lambda_{k}\lim_{t\rightarrow\infty} \langle T, e_{k} \rangle(p_{t})=0, \ \
\lim_{t\rightarrow\infty} \langle T,e_{k}\rangle=0, \ \ \text{for } \ k=1, 2, 3.
\end{equation}
\noindent From \eqref{3.1-14}, \eqref{3.1-17} and $\bar \lambda_{1}=\bar \lambda_{2}=\bar \lambda_{3}$, we have
\begin{equation}\label{3.1-19}
\begin{cases}
\begin{aligned}
\bar \lambda_{1}\sum_{i}h_{ii11}
=&-(\bar h^{2}_{111}+\bar h^{2}_{221}+\bar h^{2}_{331})-2(\bar h^{2}_{121}+\bar h^{2}_{131}+\bar h^{2}_{231}),\\
\bar \lambda_{1}\sum_{i}h_{ii22}
=&-(\bar h^{2}_{112}+\bar h^{2}_{222}+\bar h^{2}_{332})-2(\bar h^{2}_{122}+\bar h^{2}_{132}+\bar h^{2}_{232}),\\
\bar \lambda_{1}\sum_{i}h_{ii33}
=&-(\bar h^{2}_{113}+\bar h^{2}_{223}+\bar h^{2}_{333})-2(\bar h^{2}_{123}+\bar h^{2}_{133}+\bar h^{2}_{233}),
\end{aligned}
\end{cases}
\end{equation}
and
\begin{equation}\label{3.1-20}
\begin{cases}
\begin{aligned}
&\bar \lambda^{3}_{1}\sum_{i}h_{ii11}
=-3\bar \lambda^{2}_{1}(\bar h^{2}_{111}+\bar h^{2}_{221}+\bar h^{2}_{331})-6\bar \lambda^{2}_{1}(\bar h^{2}_{121}+\bar h^{2}_{131}+\bar h^{2}_{231}),\\

&\bar \lambda^{3}_{1}\sum_{i}h_{ii22}
=-3\bar \lambda^{2}_{1}(\bar h^{2}_{112}+\bar h^{2}_{222}+\bar h^{2}_{332})-6\bar \lambda^{2}_{1}(\bar h^{2}_{122}+\bar h^{2}_{132}+\bar h^{2}_{232}),\\

&\bar \lambda^{3}_{1}\sum_{i}h_{ii33}
=-3\bar \lambda^{2}_{1}(\bar h^{2}_{113}+\bar h^{2}_{223}+\bar h^{2}_{333})-6\bar \lambda^{2}_{1}(\bar h^{2}_{123}+\bar h^{2}_{133}+\bar h^{2}_{233}).
\end{aligned}
\end{cases}
\end{equation}

\noindent From \eqref{3.1-19} and \eqref{3.1-20}, we have
\begin{equation}\label{3.1-21}
\bar h_{ijk}=0, \ \ \bar H_{,kk}=0, \ \ \text{for } \ i,j,k=1, 2, 3.
\end{equation}
\noindent From \eqref{3.1-12}, \eqref{3.1-18} and \eqref{3.1-21}, we have
$$\lambda^{2}=\bar H^{2}=\sup H^{2}, \ \ \lambda^{2}=3S.$$

\vskip2mm
\noindent {\bf 2. Two of the values of the principal curvature $\bar \lambda_1$, $\bar \lambda_2$ and $\bar \lambda_3$  are equal.}

\noindent Without loss of generality, we assume that $\bar \lambda_{1}\neq \bar \lambda_{2}=\bar \lambda_{3}$, and then, $$\bar H =\bar \lambda_{1}+2\bar \lambda_{2}\neq 0.$$

\noindent From \eqref{3.1-10} and \eqref{3.1-13}, we get
\begin{equation}\label{3.1-22}
\bar h_{11k}=0, \ \ \bar h_{22k}+\bar h_{33k}=0,\ \ \text{for } \ k=1, 2, 3.
\end{equation}
\noindent {\bf Case 1: $\bar \lambda_{1}\bar \lambda_{2}\bar \lambda_{3}=0$.}

\noindent {\bf Subcase 1.1: $\bar \lambda_{1}\neq 0$, $\bar \lambda_{2}=\bar \lambda_{3}=0$.}

\noindent Since $\bar H^2\neq 0$, we have that $\bar H^{2} = S$ and $\bar f_{3}= \bar\lambda_{1}S$.

\noindent From the first equation in \eqref{3.1-17} and \eqref{3.1-22}, we have
$$\bar h_{1111}=0,$$
and then, by \eqref{3.1-14}, we know
\begin{equation*}
\begin{aligned}
0=&\bar \lambda_{1}\bar h_{1111}+\bar \lambda_{2}\bar h_{2211}+\bar \lambda_{3}\bar h_{3311}\\
 =&-\bar h^{2}_{111}-\bar h^{2}_{221}-\bar h^{2}_{331}-2\bar h^{2}_{121}-2\bar h^{2}_{131}-2\bar h^{2}_{231},\\
 =&-\bar h^{2}_{221}-\bar h^{2}_{331}-2\bar h^{2}_{231},
\end{aligned}
\end{equation*}
where $\bar \lambda_{2}=\bar \lambda_{3}=0$.
Thus, $$\bar h_{221}=\bar h_{331}=\bar h_{231}=0.$$

\noindent From $\bar h_{221}=\bar h_{331}=\bar h_{231}=0$, the second equation in \eqref{3.1-17} and \eqref{3.1-22}, we have
$$\bar h_{1122}=0,$$
and then, by \eqref{3.1-14}, we know
\begin{equation*}
\begin{aligned}
0=&\bar \lambda_{1}\bar h_{1122}+\bar \lambda_{2}\bar h_{2222}+\bar \lambda_{3}\bar h_{3322}\\
 =&-\bar h^{2}_{112}-\bar h^{2}_{222}-\bar h^{2}_{332}-2\bar h^{2}_{122}-2\bar h^{2}_{132}-2\bar h^{2}_{232},\\
 =&-\bar h^{2}_{222}-\bar h^{2}_{332}-2\bar h^{2}_{232},
\end{aligned}
\end{equation*}
where $\bar \lambda_{2}=\bar \lambda_{3}=0$.
Thus,
$$\bar h_{222}=\bar h_{332}=\bar h_{232}=\bar h_{333}=0.$$
That is,
\begin{equation*}
\bar h_{ijk}=0,\ \ \text{for } \ i,j,k=1, 2, 3.
\end{equation*}

\noindent From \eqref{2.1-16} in Lemma \ref{lemma 2.2}, we have
\begin{equation*}
 0=S^{2}-\lambda \bar f_{3},
\end{equation*}
then, we obtain
$$\lambda^{2}=\bar H^{2}=\sup H^{2}, \ \ \lambda^{2}=S.$$

\noindent {\bf Subcase 1.2: $\bar \lambda_{1}=0$, $\bar \lambda_{2}=\bar \lambda_{3}\neq0$.}

\noindent Since $\bar H^2\neq 0$, we have that $\bar H^{2} = 2S$ and $\bar f_{3}= \bar\lambda_{2}S$.

\noindent From \eqref{3.1-11}, we have
\begin{equation*}
\bar H_{,k}=\bar \lambda_{k}\lim_{t\rightarrow\infty} \langle T, e_{k} \rangle(p_{t})=0 ,\ \ \text{for } \ k=1, 2, 3,
\end{equation*}
and then,
\begin{equation*}
\lim_{t\rightarrow\infty} \langle T, e_{2} \rangle(p_{t})=0, \ \ \lim_{t\rightarrow\infty} \langle T, e_{3} \rangle(p_{t})=0.
\end{equation*}

\noindent From \eqref{3.1-22}, we have
\begin{equation*}
\bar h_{111}=\bar h_{112}=\bar h_{113}=0, \ \ \bar h_{221}=-\bar h_{331}, \ \ \bar h_{222}=-\bar h_{332}, \ \ \bar h_{223}=-\bar h_{333}.
\end{equation*}

\noindent By \eqref{3.1-14} and \eqref{3.1-17}, we have that
\begin{equation*}
\begin{cases}
\begin{aligned}
&\bar \lambda_{2}(\bar h_{2211}+\bar h_{3311})=-2\bar h^{2}_{221}-2\bar h^{2}_{123},\\
&\bar \lambda^{3}_{2}(\bar h_{2211}+\bar h_{3311})=-6\lambda^{2}_{2}\bar h^{2}_{221}-6\lambda^{2}_{2}\bar h^{2}_{123},\\

&\bar \lambda_{2}(\bar h_{2222}+\bar h_{3322})=-2\bar h^{2}_{221}-2\bar h^{2}_{222}-2\bar h^{2}_{223}-2\bar h^{2}_{123}, \\
&\bar \lambda^{3}_{2}(\bar h_{2222}+\bar h_{3322})=-2\lambda^{2}_{2}(\bar h^{2}_{221}+3\bar h^{2}_{222}+3\bar h^{2}_{223}+\bar h^{2}_{123}),
\end{aligned}
\end{cases}
\end{equation*}
then,
\begin{equation*}
\bar h_{221}=0, \ \ \bar h_{123}=0, \ \ \bar h_{222}=0, \ \ \bar h_{223}=0.
\end{equation*}
Therefore,
\begin{equation*}
\bar h_{ijk}=0,\ \ \text{for } \ i,j,k=1, 2, 3.
\end{equation*}

\noindent From \eqref{2.1-16} in Lemma \ref{lemma 2.2}, we have
\begin{equation*}
 0=S^{2}-\lambda \bar f_{3},
\end{equation*}
then, we obtain
$$\lambda^{2}=\bar H^{2}=\sup H^{2}, \ \ \lambda^{2}=2S.$$

\noindent {\bf Case 2: $\bar \lambda_{1}\bar \lambda_{2}\bar \lambda_{3}\neq0$.}

\noindent According to the hypothesis,
we have that
\begin{equation*}
\bar H=\bar \lambda_{1}+2\bar \lambda_{2}\neq 0, \ \ \bar \lambda_{1}\neq\bar \lambda_{2}=\bar \lambda_{3}, \ \ \bar \lambda_{k} \neq 0,\ \ \text{for } \ k=1, 2, 3.
\end{equation*}

\noindent From \eqref{3.1-11} and $\bar \lambda_{k} \neq 0$ for $k=1, 2, 3$, we get

\begin{equation}\label{3.1-23}
\bar H_{,k}=\bar \lambda_{k}\lim_{t\rightarrow\infty} \langle T, e_{k} \rangle(p_{t})=0,\ \
\lim_{t\rightarrow\infty} \langle T, e_{k} \rangle(p_{t})=0,\ \ \text{for } \ k=1, 2, 3,
\end{equation}

\noindent From \eqref{3.1-12}, \eqref{3.1-14}, \eqref{3.1-17}, \eqref{3.1-22} and \eqref{3.1-23}, we know that

\begin{equation}\label{3.1-24}
\begin{cases}
\begin{aligned}
&\bar h_{1111}+\bar h_{2211}+\bar h_{3311}=(\bar H-\lambda)\bar \lambda^{2}_{1},\\
&\bar h_{1122}+\bar h_{2222}+\bar h_{3322}=(\bar H-\lambda)\bar \lambda^{2}_{2},\\
&\bar h_{1133}+\bar h_{2233}+\bar h_{3333}=(\bar H-\lambda)\bar \lambda^{2}_{2},\\
&\bar h_{1112}+\bar h_{2212}+\bar h_{3312}=0,\\
&\bar h_{1113}+\bar h_{2213}+\bar h_{3313}=0,
\end{aligned}
\end{cases}
\end{equation}

\begin{equation}\label{3.1-25}
\begin{cases}
\begin{aligned}
&\bar \lambda_{1}\bar h_{1111}+\bar \lambda_{2}(\bar h_{2211}+\bar h_{3311})=-2(\bar h^{2}_{221}+\bar h^{2}_{123}),\\
&\bar \lambda_{1}\bar h_{1122}+\bar \lambda_{2}(\bar h_{2222}+\bar h_{3322})=-2(\bar h^{2}_{222}+\bar h^{2}_{223})-2(\bar h^{2}_{221}+\bar h^{2}_{123}),\\
&\bar \lambda_{1}\bar h_{1133}+\bar \lambda_{2}(\bar h_{2233}+\bar h_{3333})=-2(\bar h^{2}_{222}+\bar h^{2}_{223})-2(\bar h^{2}_{221}+\bar h^{2}_{123}),\\
&\bar \lambda_{1}\bar h_{1112}+\bar \lambda_{2}(\bar h_{2212}+\bar h_{3312})=-2(\bar h_{221}\bar h_{222}+\bar h_{223}\bar h_{123}),\\
&\bar \lambda_{1}\bar h_{1113}+\bar \lambda_{2}(\bar h_{2213}+\bar h_{3313})=-2(\bar h_{221}\bar h_{223}-\bar h_{222}\bar h_{123}),
\end{aligned}
\end{cases}
\end{equation}
and
\begin{equation}\label{3.1-26}
\begin{cases}
\begin{aligned}
\bar \lambda^{3}_{1}\bar h_{1111}+\bar \lambda^{3}_{2}(\bar h_{2211}+\bar h_{3311})=&-6\bar\lambda^{2}_{2}(\bar h^{2}_{221}+\bar h^{2}_{123}),\\

\bar \lambda^{3}_{1}\bar h_{1122}+\bar \lambda^{3}_{2}(\bar h_{2222}+\bar h_{3322})=&-6\bar\lambda^{2}_{2}(\bar h^{2}_{222}+\bar h^{2}_{223})-2(\bar\lambda^{2}_{1}+\bar\lambda^{2}_{2}\\
&+\bar\lambda_{1}\bar\lambda_{2})(\bar h^{2}_{221}+\bar h^{2}_{123}),\\

\bar \lambda^{3}_{1}\bar h_{1133}+\bar \lambda^{3}_{2}(\bar h_{2233}+\bar h_{3333})=&-6\bar\lambda^{2}_{2}(\bar h^{2}_{222}+\bar h^{2}_{223})-2(\bar\lambda^{2}_{1}+\bar\lambda^{2}_{2}\\
&+\bar\lambda_{1}\bar\lambda_{2})(\bar h^{2}_{221}+\bar h^{2}_{123}),\\

\bar \lambda^{3}_{1}\bar h_{1112}+\bar \lambda^{3}_{2}(\bar h_{2212}+\bar h_{3312})=&-6\bar\lambda^{2}_{2}(\bar h_{221}\bar h_{222}+\bar h_{223}\bar h_{123}),\\

\bar \lambda^{3}_{1}\bar h_{1113}+\bar \lambda^{3}_{2}(\bar h_{2213}+\bar h_{3313})=&-6\bar\lambda^{2}_{2}(\bar h_{221}\bar h_{223}-\bar h_{222}\bar h_{123}).
\end{aligned}
\end{cases}
\end{equation}

\noindent From $\bar \lambda_{1} \neq \bar \lambda_{2}$, \eqref{3.1-24}, \eqref{3.1-25} and \eqref{3.1-26}, we get
\begin{equation*}
\bar h_{2212}+\bar h_{3312}= -\bar h_{1112},\ \
\bar h_{2213}+\bar h_{3313}= -\bar h_{1113},
\end{equation*}
and then,
\begin{equation}\label{3.1-27}
\bar h_{221}\bar h_{222}+\bar h_{223}\bar h_{123}=0, \ \
\bar h_{221}\bar h_{223}-\bar h_{222}\bar h_{123}=0, \ \
\bar h_{1112} = 0, \ \ \bar h_{1113} = 0.
\end{equation}

\noindent Besides, by \eqref{3.1-25} and \eqref{3.1-26}, we get

\begin{equation}\label{3.1-28}
\begin{cases}
\begin{aligned}
&\bar \lambda_{1}(\bar \lambda^{2}_{2}-\bar \lambda^{2}_{1})\bar h_{1111}= 4\bar\lambda^{2}_{2}(\bar h^{2}_{221}+\bar h^{2}_{123}),\\

&\bar \lambda_{1}(\bar \lambda^{2}_{2}-\bar \lambda^{2}_{1})\bar h_{1122}= 4\bar\lambda^{2}_{2}(\bar h^{2}_{222}+\bar h^{2}_{223})+2(\bar\lambda^{2}_{1}+\bar \lambda_{1}\bar \lambda_{2})(\bar h^{2}_{221}+\bar h^{2}_{123}),\\

&\bar \lambda_{1}(\bar \lambda^{2}_{2}-\bar \lambda^{2}_{1})\bar h_{1133}= 4\bar\lambda^{2}_{2}(\bar h^{2}_{222}+\bar h^{2}_{223})+2(\bar\lambda^{2}_{1}+\bar \lambda_{1}\bar \lambda_{2})(\bar h^{2}_{221}+\bar h^{2}_{123}).
\end{aligned}
\end{cases}
\end{equation}

\noindent Now we consider four subcases.

\noindent {\bf Subcase 2.1: $\bar h^{2}_{221}+\bar h^{2}_{123} \neq 0, \ \ \bar h^{2}_{222}+\bar h^{2}_{223} \neq 0$.}

\noindent From \eqref{3.1-27}, it is a contradiction.

\noindent {\bf Subcase 2.2: $\bar h^{2}_{221}+\bar h^{2}_{123} = 0, \ \ \bar h^{2}_{222}+\bar h^{2}_{223} = 0$.}

\noindent
From \eqref{3.1-22}, we know
\begin{equation}\label{3.1-29}
\bar h_{ijk}=0, \ \ \text{for } \ i,j,k=1, 2, 3,
\end{equation}
and then, by \eqref{2.1-16} in Lemma \ref{lemma 2.2}, we have
\begin{equation}\label{3.1-30}
0=S^{2}-\lambda \bar f_{3}.
\end{equation}

\noindent If $\bar \lambda_{1}+\bar \lambda_{2} = 0$, we have
\begin{equation}\label{3.1-31}
\bar H = -\bar \lambda_{1}, \ \ S = 3\bar \lambda^{2}_{1}, \ \ \bar f_{3}= -\bar \lambda^{3}_{1}.
\end{equation}

\noindent
From \eqref{3.1-30} and \eqref{3.1-31}, we know
\begin{equation}\label{3.1-32}
\lambda = -9\bar \lambda_{1}=9\bar H.
\end{equation}

\noindent
From \eqref{2.1-17}, \eqref{2.1-18}, \eqref{3.1-29}, \eqref{3.1-31} and \eqref{3.1-32}, we know
\begin{equation*}
\begin{aligned}
&\frac{1}{2}\lim_{t\rightarrow\infty}\Delta_{-V}\sum_{i, j,k}(h_{ijk})^{2}(p_{t})=\sum_{i,j,k,l}(\bar h_{ijkl})^{2},\\
&\frac{1}{2}\lim_{t\rightarrow\infty}\Delta_{-V}\sum_{i, j,k}(h_{ijk})^{2}(p_{t})\\
=&\frac{3}{4}\lambda S(S-\bar H^{2})(\bar H-\lambda)+\frac{9}{2}\lambda S\bar h_{11}\bar h_{22}\bar h_{33}
 -\frac{3}{2}\lambda^{2}\sum_{k}(\bar h_{22}\bar h_{33}\bar h^{2}_{1k}+\bar h_{11}\bar h_{33}\bar h^{2}_{2k}+\bar h_{11}\bar h_{22}\bar h^{2}_{3k})\\
=&\frac{3}{4}\lambda S(S-\bar H^{2})(\bar H-\lambda)+\frac{9}{2}\lambda\bar \lambda_{1}\bar \lambda_{2}\bar \lambda_{3} S
  -\frac{3}{2}\lambda^{2}\bar \lambda_{1}\bar \lambda_{2}\bar \lambda_{3} \bar H \\
=&-324\bar \lambda^{6}_{1},
\end{aligned}
\end{equation*}
and then,
\begin{equation*}
\sum_{i,j,k,l}(\bar h_{ijkl})^{2}=-324\bar \lambda^{6}_{1}< 0.
\end{equation*}
It is a contradiction.

\noindent If $\bar \lambda_{1}+\bar \lambda_{2} \neq 0$,
from \eqref{3.1-25} and \eqref{3.1-26},
we know that
\begin{equation*}
\begin{cases}
\begin{aligned}
&\bar h_{1111}= 0, \ \ \bar h_{2211}+\bar h_{3311} = 0,\\
&\bar h_{1122}= 0, \ \ \bar h_{2222}+\bar h_{3322} = 0,\\
&\bar h_{1133}= 0, \ \ \bar h_{2233}+\bar h_{3333} = 0,
\end{aligned}
\end{cases}
\end{equation*}
and then, by \eqref{3.1-24}, we have
\begin{equation}\label{3.1-33}
\lambda=\bar H.
\end{equation}

\noindent
From \eqref{3.1-30} and \eqref{3.1-33}, we know
\begin{equation*}
\bar \lambda_{1}=\bar \lambda_{2},
\end{equation*}
where $\bar H=\bar \lambda_{1}+2\bar \lambda_{2}$, $S=\bar \lambda^{2}_{1}+2\bar \lambda^{2}_{2}$ and $\bar f_{3}=\bar \lambda^{3}_{1}+2\bar \lambda^{3}_{2}$.
It is a contradiction.

\noindent {\bf Subcase 2.3: $\bar h^{2}_{221}+\bar h^{2}_{123} = 0, \ \ \bar h^{2}_{222}+\bar h^{2}_{223} \neq 0$.}

\noindent
From \eqref{3.1-28}, we know
\begin{equation*}
\begin{cases}
\begin{aligned}
&\bar \lambda_{1}(\bar \lambda^{2}_{2}-\bar \lambda^{2}_{1})\bar h_{1111}= 0,\\

&\bar \lambda_{1}(\bar \lambda^{2}_{2}-\bar \lambda^{2}_{1})\bar h_{1122}= 4\bar\lambda^{2}_{2}(\bar h^{2}_{222}+\bar h^{2}_{223}),\\

&\bar \lambda_{1}(\bar \lambda^{2}_{2}-\bar \lambda^{2}_{1})\bar h_{1133}= 4\bar\lambda^{2}_{2}(\bar h^{2}_{222}+\bar h^{2}_{223}),
\end{aligned}
\end{cases}
\end{equation*}
and then,
\begin{equation}\label{3.1-34}
\bar \lambda_{1}+\bar \lambda_{2}\neq 0, \ \ \bar h_{1111}= 0, \ \ \bar h_{1122}=\frac{4\bar\lambda^{2}_{2}}{\bar \lambda_{1}(\bar \lambda^{2}_{2}-\bar \lambda^{2}_{1})}(\bar h^{2}_{222}+\bar h^{2}_{223}).
\end{equation}

\noindent
From $\bar h_{1111}= 0$ and the first equation in \eqref{3.1-25}, we know
\begin{equation*}
\bar h_{2211}+\bar h_{3311}=0,
\end{equation*}
and then, by \eqref{3.1-24}, we have
\begin{equation}\label{3.1-35}
\bar H_{,11}=0, \ \ \lambda=\bar H, \ \ \bar H_{,22}=0.
\end{equation}

\noindent
From \eqref{3.1-25} and \eqref{3.1-35}, we know
\begin{equation}\label{3.1-36}
\bar h_{1122}= \frac{2}{\bar\lambda_{2}-\bar\lambda_{1}}(\bar h^{2}_{222}+\bar h^{2}_{223}),
\end{equation}

\noindent From \eqref{3.1-34} and \eqref{3.1-36}, we have
$$\bar\lambda_{1}=\bar\lambda_{2}.$$
It is a contradiction.

\noindent {\bf Subcase 2.4: $\bar h^{2}_{221}+\bar h^{2}_{123} \neq 0, \ \ \bar h^{2}_{222}+\bar h^{2}_{223} = 0$.}

\noindent
From \eqref{3.1-28}, we know
\begin{equation*}
\begin{cases}
\begin{aligned}
&\bar \lambda_{1}(\bar \lambda^{2}_{2}-\bar \lambda^{2}_{1})\bar h_{1111}= 4\bar\lambda^{2}_{2}(\bar h^{2}_{221}+\bar h^{2}_{123}),\\

&\bar \lambda_{1}(\bar \lambda^{2}_{2}-\bar \lambda^{2}_{1})\bar h_{1122}= 2(\bar\lambda^{2}_{1}+\bar \lambda_{1}\bar \lambda_{2})(\bar h^{2}_{221}+\bar h^{2}_{123}),\\

&\bar \lambda_{1}(\bar \lambda^{2}_{2}-\bar \lambda^{2}_{1})\bar h_{1133}= 2(\bar\lambda^{2}_{1}+\bar \lambda_{1}\bar \lambda_{2})(\bar h^{2}_{221}+\bar h^{2}_{123}),
\end{aligned}
\end{cases}
\end{equation*}
and then,
\begin{equation}\label{3.1-37}
\begin{cases}
\begin{aligned}
&\bar \lambda_{1}+\bar \lambda_{2} \neq 0, \ \ \bar h_{1111}=\frac{4\bar\lambda^{2}_{2}}{\bar \lambda_{1}(\bar \lambda^{2}_{2}-\bar \lambda^{2}_{1})}(\bar h^{2}_{221}+\bar h^{2}_{123}), \\
&\bar h_{1122}=\bar h_{1133}=\frac{2}{\bar\lambda_{2}-\bar \lambda_{1}}(\bar h^{2}_{221}+\bar h^{2}_{123}).
\end{aligned}
\end{cases}
\end{equation}
\noindent
From \eqref{3.1-24}, \eqref{3.1-25} and \eqref{3.1-37}, we know
\begin{equation*}
\begin{aligned}
-2(\bar h^{2}_{221}+\bar h^{2}_{123})&=\bar \lambda_{1}\bar h_{1122}+\bar \lambda_{2}((\bar H-\lambda)\bar \lambda^{2}_{2}-\bar h_{1122}) \\
&=(\bar H-\lambda)\bar \lambda^{3}_{2}+(\bar \lambda_{1}-\bar \lambda_{2})\bar h_{1122} \\
&=(\bar H-\lambda)\bar \lambda^{3}_{2}+(\bar \lambda_{1}-\bar \lambda_{2}) \cdot \frac{2}{\bar \lambda_{2}-\bar \lambda_{1}}(\bar h^{2}_{221}+\bar h^{2}_{123}) \\
&=(\bar H-\lambda)\bar \lambda^{3}_{2}-2(\bar h^{2}_{221}+\bar h^{2}_{123}),
\end{aligned}
\end{equation*}
and then, by \eqref{3.1-24}, we have
\begin{equation}\label{3.1-38}
\lambda=\bar H, \ \ \bar H_{,11}=0.
\end{equation}

\noindent From \eqref{3.1-25} and \eqref{3.1-38}, we have

\begin{equation}\label{3.1-39}
\bar h_{1111}=\frac{2}{\bar\lambda_{2}-\bar\lambda_{1}}(\bar h^{2}_{221}+\bar h^{2}_{123}).
\end{equation}

\noindent
From \eqref{3.1-37} and \eqref{3.1-39}, we know
$$\bar\lambda_{1}=\bar\lambda_{2}.$$
It is a contradiction.

\vskip2mm
\noindent {\bf 3. The values of the principal curvature $\bar \lambda_1$, $\bar \lambda_{2}$ and $\bar \lambda_3$ are not equal to each other.}

\noindent {\bf Case 1: $\bar \lambda_{1}\bar \lambda_{2}\bar \lambda_{3}=0$.}

\noindent Without loss of generality, we assume that $\bar \lambda_{3}=0$, we know that $\bar \lambda_{1}\neq 0$, $\bar \lambda_{2}\neq 0$, $\bar \lambda_{1} -\bar \lambda_{2}\neq 0$ and $\bar H =\bar \lambda_{1}+\bar \lambda_{2}\neq 0$.

\noindent From \eqref{3.1-13} and \eqref{3.1-16}, we have that
\begin{equation}\label{3.1-40}
\bar h_{11k}=\bar h_{22k}=0, \ \ \text{for } \ k=1, 2, 3.
\end{equation}

\noindent From \eqref{3.1-10} and \eqref{3.1-40}, we have
\begin{equation}\label{3.1-41}
\bar h_{33k}=0, \ \ \text{for } \ k=1, 2, 3.
\end{equation}

\noindent By \eqref{3.1-14}, \eqref{3.1-17}, \eqref{3.1-40} and \eqref{3.1-41}, we have
\begin{equation*}
\begin{cases}
\begin{aligned}
&\bar \lambda_{1}\bar h_{1111}+\bar \lambda_{2}\bar h_{2211}=-2\bar h^{2}_{123}, \ \ \bar \lambda^{3}_{1}\bar h_{1111}+\bar \lambda^{3}_{2}\bar h_{2211}=-2\lambda^{2}_{2}\bar h^{2}_{123},\\

&\bar \lambda_{1}\bar h_{1122}+\bar \lambda_{2}\bar h_{2222}=-2\bar h^{2}_{123}, \ \ \bar \lambda^{3}_{1}\bar h_{1122}+\bar \lambda^{3}_{2}\bar h_{2222}=-2\lambda^{2}_{1}\bar h^{2}_{123},
\end{aligned}
\end{cases}
\end{equation*}
and then,
\begin{equation}\label{3.1-42}
\bar h_{1111}=0, \ \ \bar h_{2211}=-\frac{2\bar h^{2}_{123}}{\bar \lambda_{2}}, \ \ \bar h_{1122}=-\frac{2\bar h^{2}_{123}}{\bar \lambda_{1}},\ \ \bar h_{2222}=0.
\end{equation}

\noindent From \eqref{3.1-15} and \eqref{3.1-42}, we know
\begin{equation*}
\bar h_{1122}-\bar h_{2211}=-\bar \lambda_{1}\bar \lambda_{2}(\bar \lambda_{1}-\bar \lambda_{2})=\frac{2(\bar \lambda_{1}-\bar \lambda_{2})\bar h^{2}_{123}}{\bar \lambda_{1}\bar \lambda_{2}},
\end{equation*}
and then,
\begin{equation*}
\bar h^{2}_{123}=\frac{-\bar \lambda^{2}_{1}\bar \lambda^{2}_{2}}{2}, \ \ \bar h^{2}_{123}=0,\ \ \ \bar \lambda_{1}\bar \lambda_{2}=0.
\end{equation*}
It is a contradiction.

\noindent {\bf Case 2: $\bar \lambda_{1}\bar \lambda_{2}\bar \lambda_{3}\neq 0$.}

\noindent From \eqref{3.1-11}, we have that
\begin{equation}\label{3.1-43}
\lim_{m\rightarrow\infty} \langle T,e_{k} \rangle(p_{t})=0,\ \ \text{for } \ k=1, 2, 3.
\end{equation}

\noindent From \eqref{3.1-10}, \eqref{3.1-13} and \eqref{3.1-16}, we have that
\begin{equation}\label{3.1-44}
\bar h_{11k}=\bar h_{22k}=\bar h_{33k}=0,\ \ \text{for } \ k=1, 2, 3.
\end{equation}

\noindent From \eqref{3.1-12}, \eqref{3.1-14}, \eqref{3.1-17}, \eqref{3.1-43} and \eqref{3.1-44}, we have that

\begin{equation}\label{3.1-45}
\begin{cases}
\begin{aligned}
&\bar h_{1111}+\bar h_{2211}+\bar h_{3311}=(\bar H-\lambda)\bar \lambda^{2}_{1},\\
&\bar h_{1122}+\bar h_{2222}+\bar h_{3322}=(\bar H-\lambda)\bar \lambda^{2}_{2},\\
&\bar h_{1133}+\bar h_{2233}+\bar h_{3333}=(\bar H-\lambda)\bar \lambda^{2}_{3},\\
&\bar h_{1112}+\bar h_{2212}+\bar h_{3312}=0,\\
&\bar h_{1113}+\bar h_{2213}+\bar h_{3313}=0,\\
&\bar h_{1123}+\bar h_{2223}+\bar h_{3323}=0,
\end{aligned}
\end{cases}
\end{equation}

\begin{equation}\label{3.1-46}
\begin{cases}
\begin{aligned}
&\bar \lambda_{1}\bar h_{1111}+\bar \lambda_{2}\bar h_{2211}+\bar \lambda_{3}\bar h_{3311}=-2\bar h^{2}_{231},\\
&\bar \lambda_{1}\bar h_{1122}+\bar \lambda_{2}\bar h_{2222}+\bar \lambda_{3}\bar h_{3322}=-2\bar h^{2}_{132},\\
&\bar \lambda_{1}\bar h_{1133}+\bar \lambda_{2}\bar h_{2233}+\bar \lambda_{3}\bar h_{3333}=-2\bar h^{2}_{123},\\
&\bar \lambda_{1}\bar h_{1112}+\bar \lambda_{2}\bar h_{2212}+\bar \lambda_{3}\bar h_{3312}=0,\\
&\bar \lambda_{1}\bar h_{1113}+\bar \lambda_{2}\bar h_{2213}+\bar \lambda_{3}\bar h_{3313}=0,\\
&\bar \lambda_{1}\bar h_{1123}+\bar \lambda_{2}\bar h_{2223}+\bar \lambda_{3}\bar h_{3323}=0,
\end{aligned}
\end{cases}
\end{equation}
and
\begin{equation}\label{3.1-47}
\begin{cases}
\begin{aligned}
&\bar \lambda^{3}_{1}\bar h_{1111}+\bar \lambda^{3}_{2}\bar h_{2211}+\bar \lambda^{3}_{3}\bar h_{3311}
=-2(\bar\lambda^{2}_{2}+\bar\lambda^{2}_{3}+\bar\lambda_{2}\bar\lambda_{3})\bar h^{2}_{123},\\

&\bar \lambda^{3}_{1}\bar h_{1122}+\bar \lambda^{3}_{2}\bar h_{2222}+\bar \lambda^{3}_{3}\bar h_{3322}
=-2(\bar\lambda^{2}_{1}+\bar\lambda^{2}_{3}+\bar\lambda_{1}\bar\lambda_{3})\bar h^{2}_{123},\\

&\bar \lambda^{3}_{1}\bar h_{1133}+\bar \lambda^{3}_{2}\bar h_{2233}+\bar \lambda^{3}_{3}\bar h_{3333}
=-2(\bar\lambda^{2}_{1}+\bar\lambda^{2}_{2}+\bar\lambda_{1}\bar\lambda_{2})\bar h^{2}_{123},\\

&\bar \lambda^{3}_{1}\bar h_{1112}+\bar \lambda^{3}_{2}\bar h_{2212}+\bar \lambda^{3}_{3}\bar h_{3312}=0,\\

&\bar \lambda^{3}_{1}\bar h_{1113}+\bar \lambda^{3}_{2}\bar h_{2213}+\bar \lambda^{3}_{3}\bar h_{3313}=0,\\

&\bar \lambda^{3}_{1}\bar h_{1123}+\bar \lambda^{3}_{2}\bar h_{2223}+\bar \lambda^{3}_{3}\bar h_{3323}=0.
\end{aligned}
\end{cases}
\end{equation}

Therefore,
\begin{equation}\label{3.1-48}
\begin{cases}
\begin{aligned}
&\bar h_{1111}=\frac{\bar \lambda_{2}\bar \lambda_{3}(\bar \lambda_{2}+\bar \lambda_{3})}{\bar H(\bar \lambda_{1}-\bar \lambda_{2})(\bar \lambda_{1}-\bar \lambda_{3})}\cdot(\bar H-\lambda)\bar \lambda^{2}_{1}, \\

&\bar h_{2211}=\frac{-2\bar h^{2}_{123}}{\bar \lambda_{2}-\bar \lambda_{3}}+\frac{\bar \lambda_{1}\bar \lambda_{3}(\bar \lambda_{1}+\bar \lambda_{3})}{\bar H(\bar \lambda_{2}-\bar \lambda_{1})(\bar \lambda_{2}-\bar \lambda_{3})}\cdot(\bar H-\lambda)\bar \lambda^{2}_{1}, \\

&\bar h_{3311}=\frac{-2\bar h^{2}_{123}}{\bar \lambda_{3}-\bar \lambda_{2}}+\frac{\bar \lambda_{1}\bar \lambda_{2}(\bar \lambda_{1}+\bar \lambda_{2})}{\bar H(\bar \lambda_{3}-\bar \lambda_{1})(\bar \lambda_{3}-\bar \lambda_{2})}\cdot(\bar H-\lambda)\bar \lambda^{2}_{1}, \\

&\bar h_{2222}=\frac{\bar \lambda_{1}\bar \lambda_{3}(\bar \lambda_{1}+\bar \lambda_{3})}{\bar H(\bar \lambda_{2}-\bar \lambda_{1})(\bar \lambda_{2}-\bar \lambda_{3})}\cdot(\bar H-\lambda)\bar \lambda^{2}_{2}, \\

&\bar h_{1122}=\frac{-2\bar h^{2}_{123}}{\bar \lambda_{1}-\bar \lambda_{3}}+\frac{\bar \lambda_{2}\bar \lambda_{3}(\bar \lambda_{2}+\bar \lambda_{3})}{\bar H(\bar \lambda_{1}-\bar \lambda_{2})(\bar \lambda_{1}-\bar \lambda_{3})}\cdot(\bar H-\lambda)\bar \lambda^{2}_{2}, \\

&\bar h_{3322}=\frac{-2\bar h^{2}_{123}}{\bar \lambda_{3}-\bar \lambda_{1}}+\frac{\bar \lambda_{1}\bar \lambda_{2}(\bar \lambda_{1}+\bar \lambda_{2})}{\bar H(\bar \lambda_{3}-\bar \lambda_{1})(\bar \lambda_{3}-\bar \lambda_{2})}\cdot(\bar H-\lambda)\bar \lambda^{2}_{2}, \\

&\bar h_{3333}=\frac{\bar \lambda_{1}\bar \lambda_{2}(\bar \lambda_{1}+\bar \lambda_{2})}{\bar H(\bar \lambda_{3}-\bar \lambda_{1})(\bar \lambda_{3}-\bar \lambda_{2})}\cdot(\bar H-\lambda)\bar \lambda^{2}_{3}, \\

&\bar h_{1133}=\frac{-2\bar h^{2}_{123}}{\bar \lambda_{1}-\bar \lambda_{2}}+ \frac{\bar \lambda_{2}\bar \lambda_{3}(\bar \lambda_{2}+\bar \lambda_{3})}{\bar H(\bar \lambda_{1}-\bar \lambda_{2})(\bar \lambda_{1}-\bar \lambda_{3})}\cdot(\bar H-\lambda)\bar \lambda^{2}_{3}, \\

&\bar h_{2233}=\frac{-2\bar h^{2}_{123}}{\bar \lambda_{2}-\bar \lambda_{1}}+\frac{\bar \lambda_{1}\bar \lambda_{3}(\bar \lambda_{1}+\bar \lambda_{3})}{\bar H(\bar \lambda_{2}-\bar \lambda_{1})(\bar \lambda_{2}-\bar \lambda_{3})}\cdot(\bar H-\lambda)\bar \lambda^{2}_{3},\\

&\bar h_{1112}=\bar h_{2212}=\bar h_{3312}=0, \ \ \bar h_{1113}=\bar h_{2213}=\bar h_{3313}=0, \\
&\bar h_{1123}=\bar h_{2223}=\bar h_{3323}=0.
\end{aligned}
\end{cases}
\end{equation}

\noindent From \eqref{3.1-15} and \eqref{3.1-48}, we have that
\begin{equation}\label{3.1-49}
\begin{cases}
\begin{aligned}
&\frac{2\bar h^{2}_{123}(\bar \lambda_{1}-\bar \lambda_{2})}{(\bar \lambda_{1}-\bar \lambda_{3})(\bar \lambda_{2}-\bar \lambda_{3})}
+ \frac{(\bar H-\lambda)\bar \lambda_{3}\bigg(\bar \lambda^{3}_{1}(\bar \lambda^{2}_{1}-\bar \lambda^{2}_{3})+\bar \lambda^{3}_{2}(\bar \lambda^{2}_{2}-\bar \lambda^{2}_{3})\bigg)}{\bar H(\bar \lambda_{1}-\bar \lambda_{2})(\bar \lambda_{1}-\bar \lambda_{3})(\bar \lambda_{2}-\bar \lambda_{3})}\\
&=-\bar \lambda_{1}\bar \lambda_{2}(\bar \lambda_{1}-\bar \lambda_{2}),\\

&\frac{-2\bar h^{2}_{123}(\bar \lambda_{1}-\bar \lambda_{3})}{(\bar \lambda_{1}-\bar \lambda_{2})(\bar \lambda_{2}-\bar \lambda_{3})}
+ \frac{(\bar H-\lambda)\bar \lambda_{2}\bigg(\bar \lambda^{3}_{3}(\bar \lambda^{2}_{2}-\bar \lambda^{2}_{3})-\bar \lambda^{3}_{1}(\bar \lambda^{2}_{1}-\bar \lambda^{2}_{2})\bigg)}{\bar H(\bar \lambda_{1}-\bar \lambda_{2})(\bar \lambda_{1}-\bar \lambda_{3})(\bar \lambda_{2}-\bar \lambda_{3})}\\
&=-\bar \lambda_{1}\bar \lambda_{3}(\bar \lambda_{1}-\bar \lambda_{3}),\\

&\frac{2\bar h^{2}_{123}(\bar \lambda_{2}-\bar \lambda_{3})}{(\bar \lambda_{1}-\bar \lambda_{2})(\bar \lambda_{1}-\bar \lambda_{3})}
- \frac{(\bar H-\lambda)\bar \lambda_{1}\bigg(\bar \lambda^{3}_{3}(\bar \lambda^{2}_{1}-\bar \lambda^{2}_{3})+\bar \lambda^{3}_{2}(\bar \lambda^{2}_{1}-\bar \lambda^{2}_{2})\bigg)}{\bar H(\bar \lambda_{1}-\bar \lambda_{2})(\bar \lambda_{1}-\bar \lambda_{3})(\bar \lambda_{2}-\bar \lambda_{3})}\\
&=-\bar \lambda_{2}\bar \lambda_{3}(\bar \lambda_{2}-\bar \lambda_{3}),
\end{aligned}
\end{cases}
\end{equation}
and then,
\begin{equation*}
\begin{cases}
\begin{aligned}
&2\bar h^{2}_{123}\cdot\bigg(\bar \lambda_{3}(\bar \lambda_{1}-\bar \lambda_{2})+\bar \lambda_{2}(\bar \lambda_{1}-\bar \lambda_{3})\bigg)
+\frac{(\bar H-\lambda)}{\bar H}\cdot\bigg(\frac{\bar \lambda^{3}_{1}\bar \lambda^{2}_{3}(\bar \lambda^{2}_{1}-\bar \lambda^{2}_{3})+\bar \lambda^{3}_{2}\bar \lambda^{2}_{3}(\bar \lambda^{2}_{2}-\bar \lambda^{2}_{3})}{\bar \lambda_{1}-\bar \lambda_{2}}\\
&+\frac{\bar \lambda^{2}_{2}\bar \lambda^{3}_{3}(\bar \lambda^{2}_{3}-\bar \lambda^{2}_{2})+\bar \lambda^{3}_{1}\bar \lambda^{2}_{2}(\bar \lambda^{2}_{1}-\bar \lambda^{2}_{2})}{\bar \lambda_{1}-\bar \lambda_{3}}\bigg)=0, \\

&2\bar h^{2}_{123}\cdot\bigg(\bar \lambda_{1}(\bar \lambda_{3}-\bar \lambda_{2})+\bar \lambda_{2}(\bar \lambda_{3}-\bar \lambda_{1})\bigg)
+\frac{(\bar H-\lambda)}{\bar H}\cdot\bigg(\frac{\bar \lambda^{2}_{1}\bar \lambda^{3}_{3}(\bar \lambda^{2}_{3}-\bar \lambda^{2}_{1})+\bar \lambda^{2}_{1}\bar \lambda^{3}_{2}(\bar \lambda^{2}_{2}-\bar \lambda^{2}_{1})}{\bar \lambda_{3}-\bar \lambda_{2}}\\
&+\frac{\bar \lambda^{2}_{2}\bar \lambda^{3}_{3}(\bar \lambda^{2}_{3}-\bar \lambda^{2}_{2})+\bar \lambda^{3}_{1}\bar \lambda^{2}_{2}(\bar \lambda^{2}_{1}-\bar \lambda^{2}_{2})}{\bar \lambda_{3}-\bar \lambda_{1}}\bigg)=0, \\

&2\bar h^{2}_{123}\cdot\bigg(\bar \lambda_{3}(\bar \lambda_{2}-\bar \lambda_{1})+\bar \lambda_{1}(\bar \lambda_{2}-\bar \lambda_{3})\bigg)
+\frac{(\bar H-\lambda)}{\bar H}\cdot\bigg(\frac{\bar \lambda^{3}_{1}\bar \lambda^{2}_{3}(\bar \lambda^{2}_{1}-\bar \lambda^{2}_{3})+\bar \lambda^{3}_{2}\bar \lambda^{2}_{3}(\bar \lambda^{2}_{2}-\bar \lambda^{2}_{3})}{\bar \lambda_{2}-\bar \lambda_{1}}\\
&+\frac{\bar \lambda^{2}_{1}\bar \lambda^{3}_{3}(\bar \lambda^{2}_{3}-\bar \lambda^{2}_{1})+\bar \lambda^{2}_{1}\bar \lambda^{3}_{2}(\bar \lambda^{2}_{2}-\bar \lambda^{2}_{1})}{\bar \lambda_{2}-\bar \lambda_{3}}\bigg)=0.
\end{aligned}
\end{cases}
\end{equation*}
That is,
$$AX=0,$$
where
\begin{equation*}
A=\left(
\begin{array}{cc}
\bar \lambda_{3}(\bar \lambda_{1}-\bar \lambda_{2})+\bar \lambda_{2}(\bar \lambda_{1}-\bar \lambda_{3})
&\frac{\bar \lambda^{3}_{1}\bar \lambda^{2}_{3}(\bar \lambda^{2}_{1}-\bar \lambda^{2}_{3})+\bar \lambda^{3}_{2}\bar \lambda^{2}_{3}(\bar \lambda^{2}_{2}-\bar \lambda^{2}_{3})}{\bar \lambda_{1}-\bar \lambda_{2}}
+\frac{\bar \lambda^{2}_{2}\bar \lambda^{3}_{3}(\bar \lambda^{2}_{3}-\bar \lambda^{2}_{2})+\bar \lambda^{3}_{1}\bar \lambda^{2}_{2}(\bar \lambda^{2}_{1}-\bar \lambda^{2}_{2})}{\bar \lambda_{1}-\bar \lambda_{3}}  \\
\bar \lambda_{1}(\bar \lambda_{3}-\bar \lambda_{2})+\bar \lambda_{2}(\bar \lambda_{3}-\bar \lambda_{1}) & \frac{\bar \lambda^{2}_{1}\bar \lambda^{3}_{3}(\bar \lambda^{2}_{3}-\bar \lambda^{2}_{1})+\bar \lambda^{2}_{1}\bar \lambda^{3}_{2}(\bar \lambda^{2}_{2}-\bar \lambda^{2}_{1})}{\bar \lambda_{3}-\bar \lambda_{2}}
+\frac{\bar \lambda^{2}_{2}\bar \lambda^{3}_{3}(\bar \lambda^{2}_{3}-\bar \lambda^{2}_{2})+\bar \lambda^{3}_{1}\bar \lambda^{2}_{2}(\bar \lambda^{2}_{1}-\bar \lambda^{2}_{2})}{\bar \lambda_{3}-\bar \lambda_{1}}
\end{array}
\right),
\end{equation*}
and
\begin{equation*}
X=\left(
\begin{array}{cc}
2\bar h^{2}_{123}\\
\frac{\bar H-\lambda}{\bar H}
\end{array}
\right),
\end{equation*}

\noindent By a direct calculation, we have
\begin{equation*}
\begin{aligned}
det(A)
=&\frac{1}{(\bar \lambda_{1}-\bar \lambda_{2})(\bar \lambda_{1}-\bar \lambda_{3})(\bar \lambda_{2}-\bar \lambda_{3})}\cdot(2\bar \lambda^{7}_{1}\bar \lambda^{4}_{2}-2\bar \lambda^{6}_{1}\bar \lambda^{5}_{2}-2\bar \lambda^{5}_{1}\bar \lambda^{6}_{2}+2\bar \lambda^{4}_{1}\bar \lambda^{7}_{2}-2\bar \lambda^{7}_{1}\bar \lambda^{3}_{2}\bar \lambda_{3}\\
&+4\bar \lambda^{5}_{1}\bar \lambda^{5}_{2}\bar \lambda_{3}-2\bar \lambda^{3}_{1}\bar \lambda^{7}_{2}\bar \lambda_{3}+4\bar \lambda^{7}_{1}\bar \lambda^{2}_{2}\bar \lambda^{2}_{3}-4\bar \lambda^{6}_{1}\bar \lambda^{3}_{2}\bar \lambda^{2}_{3}+2\bar \lambda^{5}_{1}\bar \lambda^{4}_{2}\bar \lambda^{2}_{3}+2\bar \lambda^{4}_{1}\bar \lambda^{5}_{2}\bar \lambda^{2}_{3}\\
&-4\bar \lambda^{3}_{1}\bar \lambda^{6}_{2}\bar \lambda^{2}_{3}+4\bar \lambda^{2}_{1}\bar \lambda^{7}_{2}\bar \lambda^{2}_{3}-2\bar \lambda^{7}_{1}\bar \lambda_{2}\bar \lambda^{3}_{3}-4\bar \lambda^{6}_{1}\bar \lambda^{2}_{2}\bar \lambda^{3}_{3}-4\bar \lambda^{2}_{1}\bar \lambda^{6}_{2}\bar \lambda^{3}_{3}-2\bar \lambda_{1}\bar \lambda^{7}_{2}\bar \lambda^{3}_{3}\\
&+2\bar \lambda^{7}_{1}\bar \lambda^{4}_{3}+2\bar \lambda^{5}_{1}\bar \lambda^{2}_{2}\bar \lambda^{4}_{3}+2\bar \lambda^{2}_{1}\bar \lambda^{5}_{2}\bar \lambda^{4}_{3}+2\bar \lambda^{7}_{2}\bar \lambda^{4}_{3}-2\bar \lambda^{6}_{1}\bar \lambda^{5}_{3}+4\bar \lambda^{5}_{1}\bar \lambda_{2}\bar \lambda^{5}_{3}+2\bar \lambda^{4}_{1}\bar \lambda^{2}_{2}\bar \lambda^{5}_{3}\\
&+2\bar \lambda^{2}_{1}\bar \lambda^{4}_{2}\bar \lambda^{5}_{3}+4\bar \lambda_{1}\bar \lambda^{5}_{2}\bar \lambda^{5}_{3}-2\bar \lambda^{6}_{2}\bar \lambda^{5}_{3}-2\bar \lambda^{5}_{1}\bar \lambda^{6}_{3}-4\bar \lambda^{3}_{1}\bar \lambda^{2}_{2}\bar \lambda^{6}_{3}-4\bar \lambda^{2}_{1}\bar \lambda^{3}_{2}\bar \lambda^{6}_{3}\\
&-2\bar \lambda^{5}_{2}\bar \lambda^{6}_{3}+2\bar \lambda^{4}_{1}\bar \lambda^{7}_{3}-2\bar \lambda^{3}_{1}\bar \lambda_{2}\bar \lambda^{7}_{3}+4\bar \lambda^{2}_{1}\bar \lambda^{2}_{2}\bar \lambda^{7}_{3}-2\bar \lambda_{1}\bar \lambda^{3}_{2}\bar \lambda^{7}_{3}+2\bar \lambda^{4}_{2}\bar \lambda^{7}_{3})\\
=&\frac{2}{(\bar \lambda_{1}-\bar \lambda_{2})(\bar \lambda_{1}-\bar \lambda_{3})(\bar \lambda_{2}-\bar \lambda_{3})}\cdot(\bar \lambda^{2}_{1}\bar \lambda^{2}_{2}-\bar \lambda^{2}_{1}\bar \lambda_{2}\bar \lambda_{3}-\bar \lambda_{1}\bar \lambda^{2}_{2}\bar \lambda_{3}+\bar \lambda^{2}_{1}\bar \lambda^{2}_{3}-\bar \lambda_{1}\bar \lambda_{2}\bar \lambda^{2}_{3}\\
&+\bar \lambda^{2}_{2}\bar \lambda^{2}_{3})\cdot
(\bar \lambda^{5}_{1}\bar \lambda^{2}_{2}-\bar \lambda^{4}_{1}\bar \lambda^{3}_{2}-\bar \lambda^{3}_{1}\bar \lambda^{4}_{2}+\bar \lambda^{2}_{1}\bar \lambda^{5}_{2}+\bar \lambda^{5}_{1}\bar \lambda^{2}_{3}+\bar \lambda^{5}_{2}\bar \lambda^{2}_{3}-\bar \lambda^{4}_{1}\bar \lambda^{3}_{3}-\bar \lambda^{4}_{2}\bar \lambda^{3}_{3}\\
&-\bar \lambda^{3}_{1}\bar \lambda^{4}_{3}-\bar \lambda^{3}_{2}\bar \lambda^{4}_{3}+\bar \lambda^{2}_{1}\bar \lambda^{5}_{3}+\bar \lambda^{2}_{2}\bar \lambda^{5}_{3})\\
=&\frac{2}{(\bar \lambda_{1}-\bar \lambda_{2})(\bar \lambda_{1}-\bar \lambda_{3})(\bar \lambda_{2}-\bar \lambda_{3})}\cdot\bigg((\bar \lambda^{2}_{1} +\bar \lambda^{2}_{2}+\bar \lambda^{2}_{3})(\bar \lambda^{5}_{1} +\bar \lambda^{5}_{2}+\bar \lambda^{5}_{3})-(\bar \lambda^{3}_{1} +\bar \lambda^{3}_{2}\\
&+\bar \lambda^{3}_{3})(\bar \lambda^{4}_{1} +\bar \lambda^{4}_{2}+\bar \lambda^{4}_{3})\bigg)\cdot\bigg(\bar \lambda^{2}_{1}(\bar \lambda_{2}-\bar \lambda_{3})^{2}+\bar \lambda^{2}_{2}(\bar \lambda_{1}-\bar \lambda_{3})^{2}+\bar \lambda^{2}_{3}(\bar \lambda_{2}-\bar \lambda_{1})^{2}\bigg).
\end{aligned}
\end{equation*}

\noindent When
$$(\bar \lambda^{2}_{1} +\bar \lambda^{2}_{2}+\bar \lambda^{2}_{3})(\bar \lambda^{5}_{1} +\bar \lambda^{5}_{2}+\bar \lambda^{5}_{3})
-(\bar \lambda^{3}_{1} +\bar \lambda^{3}_{2}+\bar \lambda^{3}_{3})(\bar \lambda^{4}_{1} +\bar \lambda^{4}_{2}+\bar \lambda^{4}_{3})\neq 0,$$
that is  $$S\bar f_{5}-\bar f_{3}f_{4}\neq 0,$$
we have that the matrix $A$ is nondegenerate, and then
 $$\bar h^{2}_{123}=0, \ \ \lambda=\bar H.$$
That is, $$\bar h_{ijk}=0, \ \ i,j,k=1, 2, 3.$$

\noindent From \eqref{2.1-16} and \eqref{2.1-17} in lemma \ref{lemma 2.2}, we obtain
\begin{equation*}
\begin{aligned}
&\sum_{i,j,k}h_{ijk}^{2}+S^{2}-\lambda f_{3}=0, \\
&2\sum_{i,j,k,l,m}h_{ijm}h_{jkm}h_{kl}h_{li}+\sum_{i,j,k,l,m}h_{ijm}h_{jk}h_{klm}h_{li}+Sf_{4}-\lambda f_{5}=0.
\end{aligned}
\end{equation*}
Specifically,
\begin{equation*}
\begin{aligned}
&\sum_{i,j,k}\bar h_{ijk}^{2}+S^{2}-\lambda \bar f_{3}=0, \ \ S^{2}-\lambda \bar f_{3}=0, \\
&2\sum_{i,j,k,l,m}\bar h_{ijm}\bar h_{jkm}\bar h_{kl}\bar h_{li}+\sum_{i,j,k,l,m}\bar h_{ijm}\bar h_{jk}\bar h_{klm}\bar h_{li}+Sf_{4}-\lambda \bar f_{5}=0, \ \ Sf_{4}-\lambda \bar f_{5}=0,
\end{aligned}
\end{equation*}
and then, $S\bar f_{5}-\bar f_{3}f_{4}=0$. This contradicts the hypothesis.

\noindent  When
$$(\bar \lambda^{2}_{1} +\bar \lambda^{2}_{2}+\bar \lambda^{2}_{3})(\bar \lambda^{5}_{1} +\bar \lambda^{5}_{2}+\bar \lambda^{5}_{3})
-(\bar \lambda^{3}_{1} +\bar \lambda^{3}_{2}+\bar \lambda^{3}_{3})(\bar \lambda^{4}_{1} +\bar \lambda^{4}_{2}+\bar \lambda^{4}_{3})=0,$$
that is
\begin{equation}\label{3.1-50}
S\bar f_{5}-\bar f_{3}f_{4}=0.
\end{equation}

\noindent From \eqref{2.1-16} and \eqref{2.1-17} in Lemma \ref{lemma 2.2}, we have
\begin{equation*}
\begin{aligned}
&\sum_{i,j,k}h_{ijk}^{2}+S^{2}-\lambda f_{3}=0, \\
&2\sum_{i,j,k,l,m}h_{ijm}h_{jkm}h_{kl}h_{li}+\sum_{i,j,k,l,m}h_{ijm}h_{jk}h_{klm}h_{li}+Sf_{4}-\lambda f_{5}=0.
\end{aligned}
\end{equation*}
Thus,
\begin{equation*}
\begin{aligned}
&\sum_{i,j,k}\bar h_{ijk}^{2}+S^{2}-\lambda \bar f_{3}=0, \\
&2\sum_{i,j,k,l,m}\bar h_{ijm}\bar h_{jkm}\bar h_{kl}\bar h_{li}+\sum_{i,j,k,l,m}\bar h_{ijm}\bar h_{jk}\bar h_{klm}\bar h_{li}+Sf_{4}-\lambda \bar f_{5}=0.
\end{aligned}
\end{equation*}
Especially,
\begin{equation}\label{3.1-51}
\begin{aligned}
&\bar h^{2}_{123}=-\frac{1}{6}(S^{2}-\lambda \bar f_{3}), \\
&\bar h^{2}_{123}=\frac{-(Sf_{4}-\lambda \bar f_{5})}{\bar H^{2}+3S}.
\end{aligned}
\end{equation}

\noindent From \eqref{3.1-51}, we obtain
\begin{equation*}
\lambda\bigg(6\bar f_{5}-\bar f_{3}(\bar H^{2}+3S)\bigg)=6Sf_{4}-S^{2}(\bar H^{2}+3S).
\end{equation*}

\noindent Supposing $$6\bar f_{5}-\bar f_{3}(\bar H^{2}+3S)=0,$$
we obtain
\begin{equation}\label{3.1-52}
6f_{4}=S(\bar H^{2}+3S).
\end{equation}

\noindent From Lemma \ref{lemma 2.4},
we have
\begin{equation}\label{3.1-53}
\begin{aligned}
&f_{4}=\frac{4}{3}\bar H\bar f_{3}-\bar H^{2}S+\frac{1}{6}\bar H^{4}+\frac{1}{2}S^{2}, \\
&\bar f_{5}=\frac{5}{6}\bar H^{2}\bar f_{3}+\frac{5}{6}S\bar f_{3}-\frac{5}{6}\bar H^{3}S+\frac{1}{6}\bar H^{5}.
\end{aligned}
\end{equation}

\noindent From \eqref{3.1-50} and \eqref{3.1-53}, we obtain
\begin{equation}\label{3.1-54}
8\bar H\bar f_{3}^{2}+(\bar H^{4}-11\bar H^{2}S-2S^{2})\bar f_{3}+5\bar H^{3}S^{2}-\bar H^{5}S=0.
\end{equation}

\noindent From \eqref{3.1-52} and \eqref{3.1-53}, we obtain
\begin{equation*}
8\bar H\bar f_{3}-7\bar H^{2}S+\bar H^{4}=0,
\end{equation*}
that is,
\begin{equation}\label{3.1-55}
\bar f_{3}=\frac{7}{8}\bar HS-\frac{1}{8}\bar H^{3}.
\end{equation}

\noindent From \eqref{3.1-54} and \eqref{3.1-55}, we obtain
\begin{equation*}
\bar HS(2\bar H^{4}-7\bar H^{2}S+7S^{2})=\bar HS\bigg(2(\bar H^{2}-\frac{7}{4}S)^{2}+\frac{7}{8}S^{2}\bigg)=0,
\end{equation*}
which is impossible. Then we have
\begin{equation}\label{3.1-56}
6\bar f_{5}-\bar f_{3}(\bar H^{2}+3S)\neq 0, \ \ \lambda=\frac{6Sf_{4}-S^{2}(\bar H^{2}+3S)}{6\bar f_{5}-\bar f_{3}(\bar H^{2}+3S)}.
\end{equation}

\noindent From \eqref{3.1-51} and \eqref{3.1-56}, we obtain
\begin{equation*}
\begin{aligned}
\bar h_{123}^{2}
&=-\frac{1}{6}(S^{2}-\lambda \bar f_{3})\\
&=-\frac{1}{6} \bigg(S^{2}-\bar f_{3}\cdot\frac{6Sf_{4}-S^{2}(\bar H^{2}+3S)}{6\bar f_{6}-\bar f_{3}(\bar H^{2}+3S)}\bigg)\\
&=-S\bigg(\frac{S\bar f_{5}-f_{4}\bar f_{3}}{6\bar f_{5}-\bar f_{3}(\bar H^{2}+3S)}\bigg)\\
&=0,
\end{aligned}
\end{equation*}
where $S\bar f_{5}-\bar f_{3}f_{4}=0$.

\noindent That is, $$\bar h_{123}=0, \ \ \bar h_{ijk}=0, \ \ \text{for } \ i,j,k=1, 2, 3.$$

\noindent Supposing $\bar H-\lambda=0$, from $\bar h_{123}=0$ and \eqref{3.1-48},
we obtain
\begin{equation*}
\bar h_{ijkl}=0, \ \ \text{for } \ i,j,k=1, 2, 3.
\end{equation*}

\noindent From \eqref{2.1-18} and \eqref{2.1-19} in lemma \ref{lemma 2.3}, we have
\begin{equation*}
\begin{aligned}
0=&\lim_{t\rightarrow\infty}\frac{1}{2}\Delta_{-V}\sum_{i, j,k}(h_{ijk})^{2}(p_{t}) \\
 =&\frac{9}{2}\lambda S\bar h_{11}\bar h_{22}\bar h_{33}-\frac{3}{2}\lambda^{2}\sum_{k}(\bar h_{22}\bar h_{33}\bar h^{2}_{1k}+\bar h_{11}\bar h_{33}\bar h^{2}_{2k}+\bar h_{11}\bar h_{22}\bar h^{2}_{3k}) \\
 =&\frac{3}{2}\lambda \bar\lambda_{1}\bar \lambda_{2}\bar \lambda_{3}(3S-\lambda \bar H) \\
 =&\frac{3}{2}\lambda \bar\lambda_{1}\bar \lambda_{2}\bar \lambda_{3}(3S-\bar H^{2}),
\end{aligned}
\end{equation*}
where $\bar H-\lambda=0$ and $\bar h_{ijk}=0, \ \bar h_{ijkl}=0, \ i,j,k,l=1, 2, 3$.

\noindent Therefore,
\begin{equation*}
3S-\bar H^{2}=0, \ \ \bar \lambda_{1}=\bar \lambda_{2}=\bar \lambda_{3}.
\end{equation*}
This contradicts the hypothesis. We have $$\bar H-\lambda\neq 0.$$

\noindent From  $\bar h_{123}=0$ and \eqref{3.1-51}, we have
\begin{equation}\label{3.1-57}
\lambda=\frac{S^{2}}{\bar f_{3}}, \ \ \frac{\bar H-\lambda}{\bar H}=\frac{\bar H \bar f_{3}-S^{2}}{\bar H \bar f_{3}}.
\end{equation}

\noindent From $S=constant$ and \eqref{2.1-16} in Lemma \ref{lemma 2.2}, we have
\begin{equation*}
\begin{aligned}
&2\sum_{i,j,k}h_{ijk}h_{ijkl}-\lambda\nabla_{l}f_{3}=0, \\
&2\sum_{i,j,k}h_{ijk}h_{ijklm}+2\sum_{i,j,k}h_{ijkm}h_{ijkl}-\lambda\nabla_{m}\nabla_{l}f_{3}=0,
\ \ \text{for } \ l,m=1, 2, 3.
\end{aligned}
\end{equation*}
Thus,
\begin{equation*}
\sum_{i,j,k}\bar h_{ijk}\bar h_{ijklm}+\sum_{i,j,k}\bar h_{ijkm}\bar h_{ijkl}-\frac{1}{2}\lambda\lim_{t\rightarrow\infty}\nabla_{m}\nabla_{l}f_{3}(p_{t})=0,
\ \ \text{for } \ l,m=1, 2, 3.
\end{equation*}
Especially,
\begin{equation}\label{3.1-58}
\begin{aligned}
&\bar h^{2}_{1111}+3\bar h^{2}_{2211}+3\bar h^{2}_{3311}-\frac{1}{2}\lambda\lim_{t\rightarrow\infty}\nabla_{1}\nabla_{1}f_{3}(p_{t})=0,\\
&\bar h^{2}_{2222}+3\bar h^{2}_{1122}+3\bar h^{2}_{3322}-\frac{1}{2}\lambda\lim_{t\rightarrow\infty}\nabla_{2}\nabla_{2}f_{3}(p_{t})=0,\\
&\bar h^{2}_{3333}+3\bar h^{2}_{1133}+3\bar h^{2}_{2233}-\frac{1}{2}\lambda\lim_{t\rightarrow\infty}\nabla_{3}\nabla_{3}f_{3}(p_{t})=0.
\end{aligned}
\end{equation}

\noindent From $f_{4}=constant$ and \eqref{2.1-21} in Lemma \ref{lemma 2.4}, we have

\begin{equation*}
(\frac{4}{3}\bar f_{3}-2S\bar H+\frac{2}{3}\bar H^{3})\bar H_{,kl}+\frac{4}{3}\bar H\lim_{t\rightarrow\infty}\nabla_{l}\nabla_{k}f_{3}(p_{t})=0,
\end{equation*}
and then,
\begin{equation*}
\lim_{t\rightarrow\infty}\nabla_{l}\nabla_{k}f_{3}(p_{t})=-\frac{\bar f_{3}-\frac{3}{2}S\bar H+\frac{1}{2}\bar H^{3}}{\bar H}\cdot\bar H_{,kl}=-\frac{3\bar\lambda_{1}\bar\lambda_{2}\bar\lambda_{3}}{\bar H}\bar H_{,kl}.
\end{equation*}
Therefore,
\begin{equation}\label{3.1-59}
\begin{aligned}
&-\frac{1}{2}\lambda\lim_{t\rightarrow\infty}\nabla_{1}\nabla_{1}f_{3}(p_{t})=\lambda\cdot\frac{3\bar\lambda_{1}\bar\lambda_{2}\bar\lambda_{3}}{2\bar H}\bar H_{,11}=\frac{3\lambda(\bar H-\lambda)}{2\bar H}\cdot\bar \lambda^{3}_{1}\bar\lambda_{2}\bar\lambda_{3},\\
&-\frac{1}{2}\lambda\lim_{t\rightarrow\infty}\nabla_{2}\nabla_{2}f_{3}(p_{t})=\lambda\cdot\frac{3\bar\lambda_{1}\bar\lambda_{2}\bar\lambda_{3}}{2\bar H}\bar H_{,22}=\frac{3\lambda(\bar H-\lambda)}{2\bar H}\cdot\bar \lambda_{1}\bar\lambda^{3}_{2}\bar\lambda_{3},\\
&-\frac{1}{2}\lambda\lim_{t\rightarrow\infty}\nabla_{3}\nabla_{3}f_{3}(p_{t})=\lambda\cdot\frac{3\bar\lambda_{1}\bar\lambda_{2}\bar\lambda_{3}}{2\bar H}\bar H_{,33}=\frac{3\lambda(\bar H-\lambda)}{2\bar H}\cdot\bar \lambda_{1}\bar\lambda_{2}\bar\lambda^{3}_{3}.
\end{aligned}
\end{equation}

\noindent From $\bar h_{123}=0$ and \eqref{3.1-48}, we have
\begin{equation}\label{3.1-60}
\begin{cases}
\begin{aligned}
&\bar h_{1111}=\frac{\bar \lambda_{2}\bar \lambda_{3}(\bar \lambda_{2}+\bar \lambda_{3})}{(\bar \lambda_{1}-\bar \lambda_{2})(\bar \lambda_{1}-\bar \lambda_{3})}\cdot\frac{(\bar H-\lambda)\bar \lambda^{2}_{1}}{\bar H}, \\

&\bar h_{2211}=\frac{\bar \lambda_{1}\bar \lambda_{3}(\bar \lambda_{1}+\bar \lambda_{3})}{(\bar \lambda_{2}-\bar \lambda_{1})(\bar \lambda_{2}-\bar \lambda_{3})}\cdot\frac{(\bar H-\lambda)\bar \lambda^{2}_{1}}{\bar H}, \\

&\bar h_{3311}=\frac{\bar \lambda_{1}\bar \lambda_{2}(\bar \lambda_{1}+\bar \lambda_{2})}{(\bar \lambda_{3}-\bar \lambda_{1})(\bar \lambda_{3}-\bar \lambda_{2})}\cdot\frac{(\bar H-\lambda)\bar \lambda^{2}_{1}}{\bar H}, \\

&\bar h_{2222}=\frac{\bar \lambda_{1}\bar \lambda_{3}(\bar \lambda_{1}+\bar \lambda_{3})}{(\bar \lambda_{2}-\bar \lambda_{1})(\bar \lambda_{2}-\bar \lambda_{3})}\cdot\frac{(\bar H-\lambda)\bar \lambda^{2}_{2}}{\bar H}, \\

&\bar h_{1122}=\frac{\bar \lambda_{2}\bar \lambda_{3}(\bar \lambda_{2}+\bar \lambda_{3})}{(\bar \lambda_{1}-\bar \lambda_{2})(\bar \lambda_{1}-\bar \lambda_{3})}\cdot\frac{(\bar H-\lambda)\bar \lambda^{2}_{2}}{\bar H}, \\

&\bar h_{3322}=\frac{\bar \lambda_{1}\bar \lambda_{2}(\bar \lambda_{1}+\bar \lambda_{2})}{(\bar \lambda_{3}-\bar \lambda_{1})(\bar \lambda_{3}-\bar \lambda_{2})}\cdot\frac{(\bar H-\lambda)\bar \lambda^{2}_{2}}{\bar H}, \\

&\bar h_{3333}=\frac{\bar \lambda_{1}\bar \lambda_{2}(\bar \lambda_{1}+\bar \lambda_{2})}{(\bar \lambda_{3}-\bar \lambda_{1})(\bar \lambda_{3}-\bar \lambda_{2})}\cdot\frac{(\bar H-\lambda)\bar \lambda^{2}_{3}}{\bar H}, \\

&\bar h_{1133}=\frac{\bar \lambda_{2}\bar \lambda_{3}(\bar \lambda_{2}+\bar \lambda_{3})}{(\bar \lambda_{1}-\bar \lambda_{2})(\bar \lambda_{1}-\bar \lambda_{3})}\cdot\frac{(\bar H-\lambda)\bar \lambda^{2}_{3}}{\bar H}, \\

&\bar h_{2233}=\frac{\bar \lambda_{1}\bar \lambda_{3}(\bar \lambda_{1}+\bar \lambda_{3})}{(\bar \lambda_{2}-\bar \lambda_{1})(\bar \lambda_{2}-\bar \lambda_{3})}\cdot\frac{(\bar H-\lambda)\bar \lambda^{2}_{3}}{\bar H}.
\end{aligned}
\end{cases}
\end{equation}

\noindent From \eqref{3.1-57}, \eqref{3.1-58}, \eqref{3.1-59} and \eqref{3.1-60}, we have
\begin{equation}\label{3.1-61}
\begin{cases}
\begin{aligned}
&\frac{\bar \lambda_{1}(\bar H \bar f_{3}-S^{2})}{\bar H \bar f_{3}}\cdot\bigg(
\frac{\bar \lambda^{2}_{2}\bar \lambda^{2}_{3}(\bar \lambda_{2}+\bar \lambda_{3})^{2}}{(\bar \lambda_{1}-\bar \lambda_{2})^{2}(\bar \lambda_{1}-\bar \lambda_{3})^{2}}+\frac{3\bar \lambda^{2}_{1}\bar \lambda^{2}_{3}(\bar \lambda_{1}+\bar \lambda_{3})^{2}}{(\bar \lambda_{2}-\bar \lambda_{1})^{2}(\bar \lambda_{2}-\bar \lambda_{3})^{2}}\\
&+\frac{3\bar \lambda^{2}_{1}\bar \lambda^{2}_{2}(\bar \lambda_{1}+\bar \lambda_{2})^{2}}{(\bar \lambda_{3}-\bar \lambda_{1})^{2}(\bar \lambda_{3}-\bar \lambda_{2})^{2}}
\bigg)+\frac{3\bar\lambda_{2}\bar\lambda_{3}S^{2}}{2\bar f_{3}}=0, \\

&\frac{\bar \lambda_{2}(\bar H \bar f_{3}-S^{2})}{\bar H \bar f_{3}}\cdot\bigg(
\frac{\bar \lambda^{2}_{1}\bar \lambda^{2}_{3}(\bar \lambda_{1}+\bar \lambda_{3})^{2}}{(\bar \lambda_{2}-\bar \lambda_{1})^{2}(\bar \lambda_{2}-\bar \lambda_{3})^{2}}+\frac{3\bar \lambda^{2}_{2}\bar \lambda^{2}_{3}(\bar \lambda_{2}+\bar \lambda_{3})^{2}}{(\bar \lambda_{1}-\bar \lambda_{2})^{2}(\bar \lambda_{1}-\bar \lambda_{3})^{2}}\\
&+\frac{3\bar \lambda^{2}_{1}\bar \lambda^{2}_{2}(\bar \lambda_{1}+\bar \lambda_{2})^{2}}{(\bar \lambda_{3}-\bar \lambda_{1})^{2}(\bar \lambda_{3}-\bar \lambda_{2})^{2}}
\bigg)+\frac{3\bar\lambda_{1}\bar\lambda_{3}S^{2}}{2\bar f_{3}}=0, \\

&\frac{\bar \lambda_{3}(\bar H \bar f_{3}-S^{2})}{\bar H \bar f_{3}}\cdot\bigg(
\frac{\bar \lambda^{2}_{1}\bar \lambda^{2}_{2}(\bar \lambda_{1}+\bar \lambda_{2})^{2}}{(\bar \lambda_{3}-\bar \lambda_{1})^{2}(\bar \lambda_{3}-\bar \lambda_{2})^{2}}+\frac{3\bar \lambda^{2}_{2}\bar \lambda^{2}_{3}(\bar \lambda_{2}+\bar \lambda_{3})^{2}}{(\bar \lambda_{1}-\bar \lambda_{2})^{2}(\bar \lambda_{1}-\bar \lambda_{2})^{2}}\\
&+\frac{3\bar \lambda^{2}_{1}\bar \lambda^{2}_{3}(\bar \lambda_{1}+\bar \lambda_{3})^{2}}{(\bar \lambda_{2}-\bar \lambda_{1})^{2}(\bar \lambda_{2}-\bar \lambda_{3})^{2}}
\bigg)+\frac{3\bar\lambda_{1}\bar\lambda_{2}S^{2}}{2\bar f_{3}}=0.
\end{aligned}
\end{cases}
\end{equation}
And then,
\begin{equation*}
\begin{cases}
\begin{aligned}
&2\bar \lambda_{1}(\bar H \bar f_{3}-S^{2})\cdot\bigg(
\bar \lambda^{2}_{2}\bar \lambda^{2}_{3}(\bar \lambda^{2}_{2}-\bar \lambda^{2}_{3})^{2}+3\bar \lambda^{2}_{1}\bar \lambda^{2}_{3}(\bar \lambda^{2}_{1}-\bar \lambda^{2}_{3})^{2}+3\bar \lambda^{2}_{1}\bar \lambda^{2}_{2}(\bar \lambda^{2}_{1}-\bar \lambda^{2}_{2})^{2}\bigg)\\
&+3\bar\lambda_{2}\bar\lambda_{3}\bar H S^{2}(\bar \lambda_{1}-\bar \lambda_{2})^{2}(\bar \lambda_{1}-\bar \lambda_{3})^{2}(\bar \lambda_{2}-\bar \lambda_{3})^{2}=0, \ \ (1)\\

&2\bar \lambda_{2}(\bar H \bar f_{3}-S^{2})\cdot\bigg(
\bar \lambda^{2}_{1}\bar \lambda^{2}_{3}(\bar \lambda^{2}_{1}-\bar \lambda^{2}_{3})^{2}+3\bar \lambda^{2}_{2}\bar \lambda^{2}_{3}(\bar \lambda^{2}_{2}-\bar \lambda^{2}_{3})^{2}+3\bar \lambda^{2}_{1}\bar \lambda^{2}_{2}(\bar \lambda^{2}_{1}-\bar \lambda^{2}_{2})^{2}\bigg)\\
&+3\bar\lambda_{1}\bar\lambda_{3}\bar H S^{2}(\bar \lambda_{1}-\bar \lambda_{2})^{2}(\bar \lambda_{1}-\bar \lambda_{3})^{2}(\bar \lambda_{2}-\bar \lambda_{3})^{2}=0, \ \ (2)\\

&2\bar \lambda_{3}(\bar H \bar f_{3}-S^{2})\cdot\bigg(
\bar \lambda^{2}_{1}\bar \lambda^{2}_{2}(\bar \lambda^{2}_{1}-\bar \lambda^{2}_{2})^{2}+3\bar \lambda^{2}_{2}\bar \lambda^{2}_{3}(\bar \lambda^{2}_{2}-\bar \lambda^{2}_{3})^{2}+3\bar \lambda^{2}_{1}\bar \lambda^{2}_{3}(\bar \lambda^{2}_{1}-\bar \lambda^{2}_{3})^{2}\bigg)\\
&+3\bar\lambda_{1}\bar\lambda_{2}\bar H S^{2}(\bar \lambda_{1}-\bar \lambda_{2})^{2}(\bar \lambda_{1}-\bar \lambda_{3})^{2}(\bar \lambda_{2}-\bar \lambda_{3})^{2}=0. \ \ (3)
\end{aligned}
\end{cases}
\end{equation*}

\noindent By computing  $\bar \lambda_{1}\times (1)-\bar \lambda_{2}\times (2)$, we have
\begin{equation*}
\begin{aligned}
&2(\bar \lambda^{2}_{1}-\bar \lambda^{2}_{2})(\bar H \bar f_{3}-S^{2})\bigg(3\bar \lambda^{2}_{1}\bar \lambda^{2}_{2}(\bar \lambda^{2}_{1}-\bar \lambda^{2}_{2})^{2}+3\bar \lambda^{2}_{1}\bar \lambda^{2}_{3}(\bar \lambda^{2}_{1}-\bar \lambda^{2}_{3})^{2}+3\bar \lambda^{2}_{2}\bar \lambda^{2}_{3}(\bar \lambda^{2}_{2}-\bar \lambda^{2}_{3})^{2}\\
&+2\bar \lambda^{2}_{1}\bar \lambda^{2}_{2}\bar \lambda^{2}_{3}(\bar \lambda^{2}_{1}+\bar \lambda^{2}_{2}-2\bar \lambda^{2}_{3})\bigg)=0.
\end{aligned}
\end{equation*}

\noindent Supposing  $\bar H \bar f_{3}-S^{2}=0$, from \eqref{3.1-54}, we obtain
\begin{equation}\label{3.1-62}
\begin{aligned}
0=&8\bar H\bar f_{3}^{2}+(\bar H^{4}-11\bar H^{2}S-2S^{2})\bar f_{3}+5\bar H^{3}S^{2}-\bar H^{5}S\\
=&\frac{S}{\bar H}(6S^{3}-11\bar H^{2}S^{2}+6\bar H^{4}S-\bar H^{6})\\
=&\frac{S}{\bar H}(S-\bar H^{2})(2S-\bar H^{2})(3S-\bar H^{2}).
\end{aligned}
\end{equation}

\noindent From $\lambda_{1}\neq\lambda_{2}\neq\lambda_{3}\neq\lambda_{1}$, we obtain
$$
H^{2}=(\lambda_{1}+\lambda_{2}+\lambda_{3})^{2}< 3(\lambda_{1}^{2} +\lambda_{2}^{2} +\lambda_{3}^{2})=3S.
$$
Hence,
$$
H^{2}<3S.
$$

\noindent From \eqref{3.1-62}, we obtain that $S-\bar H^{2}=0$ or $2S-\bar H^{2}=0$.
Besides, for $n=3$, we have $\bar f_{3}=\frac{\bar H}{2}(3S-\bar H^{2})+3\bar \lambda_{1}\bar \lambda_{2}\bar \lambda_{3}$.

\noindent When $S-\bar H^{2}=0$, we have that
\begin{equation*}
\begin{aligned}
&\bar f_{3}=\frac{S^{2}}{\bar H}=\bar H^{3},\\
&\bar f_{3}=\frac{\bar H}{2}(3S-\bar H^{2})+3\bar \lambda_{1}\bar \lambda_{2}\bar \lambda_{3}=\bar H^{3}+3\bar \lambda_{1}\bar \lambda_{2}\bar \lambda_{3}.
\end{aligned}
\end{equation*}
And then, $\bar \lambda_{1}\bar \lambda_{2}\bar \lambda_{3}=0$. This contradicts the hypothesis.

\noindent When $2S-\bar H^{2}=0$, we have that
\begin{equation*}
\begin{aligned}
&\bar f_{3}=\frac{S^{2}}{\bar H}=\frac{\bar H^{3}}{4},\\
&\bar f_{3}=\frac{\bar H}{2}(3S-\bar H^{2})+3\bar \lambda_{1}\bar \lambda_{2}\bar \lambda_{3}=\frac{\bar H^{3}}{4}+3\bar \lambda_{1}\bar \lambda_{2}\bar \lambda_{3}.
\end{aligned}
\end{equation*}
And then, $\bar \lambda_{1}\bar \lambda_{2}\bar \lambda_{3}=0$. This contradicts the hypothesis.
Hence, $$\bar H \bar f_{3}-S^{2}\neq 0.$$

\noindent Supposing  $\bar \lambda^{2}_{1}-\bar \lambda^{2}_{2}=0$, that is $\bar \lambda_{1}=-\bar \lambda_{2}$.

\noindent From \eqref{3.1-50}, we obtain
\begin{equation*}
\begin{aligned}
0=&S\bar f_{5}-\bar f_{3}f_{4}\\
=&(2\bar \lambda^{2}_{1}+\bar \lambda^{2}_{3})\bar \lambda^{5}_{3}-(2\bar \lambda^{4}_{1}+\bar \lambda^{4}_{3})\bar \lambda^{3}_{3}\\
=&2\bar \lambda^{2}_{1}\bar \lambda^{3}_{3}(\bar \lambda^{2}_{3}-\bar \lambda^{2}_{1}),
\end{aligned}
\end{equation*}
which implies $\bar \lambda^{2}_{1}=\bar \lambda^{2}_{3}$. Then $\bar \lambda_{1}=\bar \lambda_{3}$ or $\bar \lambda_{1}=-\bar \lambda_{3}=-\bar \lambda_{2}$, which is a contradiction.
Hence,
\begin{equation}\label{3.1-63}
\begin{aligned}
&3\bar \lambda^{2}_{1}\bar \lambda^{2}_{2}(\bar \lambda^{2}_{1}-\bar \lambda^{2}_{2})^{2}+3\bar \lambda^{2}_{1}\bar \lambda^{2}_{3}(\bar \lambda^{2}_{1}-\bar \lambda^{2}_{3})^{2}+3\bar \lambda^{2}_{2}\bar \lambda^{2}_{3}(\bar \lambda^{2}_{2}-\bar \lambda^{2}_{3})^{2}\\
&+2\bar \lambda^{2}_{1}\bar \lambda^{2}_{2}\bar \lambda^{2}_{3}(\bar \lambda^{2}_{1}+\bar \lambda^{2}_{2}-2\bar \lambda^{2}_{3})=0.
\end{aligned}
\end{equation}
\noindent Similarity, by computing  $\bar \lambda_{2}\times (2)-\bar \lambda_{3}\times (3)$, we have
\begin{equation*}
\begin{aligned}
&2(\bar \lambda^{2}_{2}-\bar \lambda^{2}_{3})(\bar H \bar f_{3}-S^{2})\bigg(3\bar \lambda^{2}_{1}\bar \lambda^{2}_{2}(\bar \lambda^{2}_{1}-\bar \lambda^{2}_{2})^{2}+3\bar \lambda^{2}_{1}\bar \lambda^{2}_{3}(\bar \lambda^{2}_{1}-\bar \lambda^{2}_{3})^{2}+3\bar \lambda^{2}_{2}\bar \lambda^{2}_{3}(\bar \lambda^{2}_{2}-\bar \lambda^{2}_{3})^{2}\\
&+2\bar \lambda^{2}_{1}\bar \lambda^{2}_{2}\bar \lambda^{2}_{3}(\bar \lambda^{2}_{2}+\bar \lambda^{2}_{3}-2\bar \lambda^{2}_{1})\bigg)=0,
\end{aligned}
\end{equation*}
which implies \begin{equation}\label{3.1-64}
\begin{aligned}
&3\bar \lambda^{2}_{1}\bar \lambda^{2}_{2}(\bar \lambda^{2}_{1}-\bar \lambda^{2}_{2})^{2}+3\bar \lambda^{2}_{1}\bar \lambda^{2}_{3}(\bar \lambda^{2}_{1}-\bar \lambda^{2}_{3})^{2}+3\bar \lambda^{2}_{2}\bar \lambda^{2}_{3}(\bar \lambda^{2}_{2}-\bar \lambda^{2}_{3})^{2}\\
&+2\bar \lambda^{2}_{1}\bar \lambda^{2}_{2}\bar \lambda^{2}_{3}(\bar \lambda^{2}_{2}+\bar \lambda^{2}_{3}-2\bar \lambda^{2}_{1})=0.
\end{aligned}
\end{equation}

\noindent From \eqref{3.1-63} and \eqref{3.1-64}, we have
$$\bar \lambda^{2}_{1}+\bar \lambda^{2}_{2}-2\bar \lambda^{2}_{3}=\bar \lambda^{2}_{2}+\bar \lambda^{2}_{3}-2\bar \lambda^{2}_{1}.$$
That is, $\bar \lambda_{1}=-\bar \lambda_{3}$.

\noindent From \eqref{3.1-50} and $\bar \lambda_{1}=-\bar \lambda_{3}$, we obtain
\begin{equation*}
\begin{aligned}
0=&S\bar f_{5}-\bar f_{3}f_{4}\\
=&(2\bar \lambda^{2}_{1}+\bar \lambda^{2}_{2})\bar \lambda^{5}_{2}-(2\bar \lambda^{4}_{1}+\bar \lambda^{4}_{2})\bar \lambda^{3}_{2}\\
=&2\bar \lambda^{2}_{1}\bar \lambda^{3}_{2}(\bar \lambda^{2}_{2}-\bar \lambda^{2}_{1}),
\end{aligned}
\end{equation*}
Then $\bar \lambda_{1}=-\bar \lambda_{3}=-\bar \lambda_{2}$ and $\bar \lambda_{2}=\bar \lambda_{3}$, which is a contradiction.
\end{proof}

\begin{theorem}\label{theorem 3.3}
For a $3$-dimensional complete $\lambda$-translator $x:M^{3}\rightarrow \mathbb{R}^{4}_{1}$ with non-zero constant squared norm $S$ of the second fundamental form and constant $f_{4}$, where $S=\sum_{i,j}h_{ij}^2$ and $f_{4}=\sum_{i,j,k,l}h_{ij}h_{jk}h_{kl}h_{li}$,
we have either
\begin{enumerate}
\item $\lambda^{2}=S$ and $\inf H^{2}=S$, or
\item $\lambda^{2}=2S$ and $\inf H^{2}=2S$, or
\item $\lambda^{2}=3S$ and $\inf H^{2}=3S$.
\end{enumerate}
\end{theorem}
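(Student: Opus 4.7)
The plan is to mirror the proof of Theorem \ref{theorem 3.2} with the roles of supremum and infimum swapped. Since at each point $h_{ij}=\lambda_i\delta_{ij}$ gives $H^2=(\lambda_1+\lambda_2+\lambda_3)^2\le 3(\lambda_1^2+\lambda_2^2+\lambda_3^2)=3S$, the function $-H^2$ is bounded from above on $M^3$. By Lemma \ref{lemma 2.5}, $Ric_{-V}$ is bounded from below, so the generalized maximum principle of Lemma \ref{lemma 2.1} applied to $-H^2$ produces a sequence $\{p_t\}\subset M^3$ with
\begin{equation*}
\lim_{t\to\infty}H^2(p_t)=\inf H^2=\bar H^2,\quad \lim_{t\to\infty}|\nabla H^2|(p_t)=0,\quad \lim_{t\to\infty}\Delta_{-V}H^2(p_t)\ge 0.
\end{equation*}
Theorem \ref{theorem 3.1} gives $\inf H^2>0$, so we may assume $H(p_t)\ne 0$ for all $t$. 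From \eqref{2.1-16}, \eqref{2.1-17}, \eqref{2.1-18} together with $S$ and $f_4$ being constant, the sequences $\{h_{ij}(p_t)\}$, $\{h_{ijk}(p_t)\}$, $\{h_{ijkl}(p_t)\}$ are bounded, so along a subsequence they converge to limits $\bar h_{ij}=\bar\lambda_i\delta_{ij}$, $\bar h_{ijk}$, $\bar h_{ijkl}$ with $\bar H=\bar\lambda_1+\bar\lambda_2+\bar\lambda_3\ne 0$.

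Next I would reproduce the pointwise algebraic consequences used in Theorem \ref{theorem 3.2}. From $|\nabla H^2|(p_t)\to 0$ and $\bar H\ne 0$ we obtain $\bar H_{,k}=0$, hence $\bar h_{11k}+\bar h_{22k}+\bar h_{33k}=0$; the identities $\sum_{i,j}h_{ij}h_{ijk}=0$, $\nabla_m f_4=0$ and $\sum h_{ij}h_{ijkl}+\sum h_{ijk}h_{ijl}=0$, $\nabla_l\nabla_m f_4=0$ give the linear systems \eqref{3.1-13}, \eqref{3.1-16} and \eqref{3.1-14}, \eqref{3.1-17} verbatim; the $\lambda$-translator relations \eqref{2.1-13} give \eqref{3.1-11} and \eqref{3.1-12}; and the Ricci identities yield \eqref{3.1-15}. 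None of these derivations depends on the sign of $\lim\Delta_{-V}H^2(p_t)$, so they carry over unchanged.

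Then I would run the same three-fold case analysis on the multiplicity pattern of $\bar\lambda_1,\bar\lambda_2,\bar\lambda_3$. When all three are equal, $\bar H\ne 0$ forces $\bar H^2=3S$ and then $\lambda^2=3S$. When exactly two are equal, Subcase 1.1 ($\bar\lambda_1\ne 0=\bar\lambda_2=\bar\lambda_3$) gives $\bar H^2=S=\lambda^2$ and Subcase 1.2 ($\bar\lambda_1=0\ne\bar\lambda_2=\bar\lambda_3$) gives $\bar H^2=2S=\lambda^2$; the remaining cases $\bar\lambda_1\bar\lambda_2\bar\lambda_3\ne 0$ are ruled out by Subcases 2.1--2.4 of Theorem \ref{theorem 3.2}. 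When all three are distinct, both $\bar\lambda_1\bar\lambda_2\bar\lambda_3=0$ and $\bar\lambda_1\bar\lambda_2\bar\lambda_3\ne 0$ lead to contradictions by the arguments in ``3.\ The values of the principal curvature $\bar\lambda_1,\bar\lambda_2,\bar\lambda_3$ are not equal to each other'' of Theorem \ref{theorem 3.2}, using the hypothesis $S\bar f_5-\bar f_3 f_4=0$ or $\ne 0$ to close the algebra.

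The main obstacle is bookkeeping rather than new content: a few steps of Theorem \ref{theorem 3.2} use the inequality $S(\bar H-\lambda)\bar H\le 0$ coming from $\lim\Delta_{-V}H^2(p_t)\le 0$, whereas here the reverse inequality $S(\bar H-\lambda)\bar H\ge 0$ holds. I would verify in each such spot that the decisive conclusion $\lambda=\bar H$ (equivalently $\lambda^2=\bar H^2$) is in fact produced by the equalities in the linear systems \eqref{3.1-25}--\eqref{3.1-26}, \eqref{3.1-28}, \eqref{3.1-48} coming from constancy of $S$ and $f_4$ and the Codazzi/Ricci identities, so the sign of the Laplacian inequality plays no essential role. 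Once this is checked, the same trichotomy $\lambda^2\in\{S,2S,3S\}$ forces $\inf H^2=\bar H^2=\lambda^2\in\{S,2S,3S\}$ accordingly.
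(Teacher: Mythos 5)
Your proposal is correct and follows essentially the same route as the paper: the paper's own proof of Theorem \ref{theorem 3.3} likewise applies the generalized maximum principle to $-H^{2}$, invokes Theorem \ref{theorem 3.1} for $\inf H^{2}>0$, and repeats the case analysis of Theorem \ref{theorem 3.2}, whose conclusions indeed come only from the limiting equalities (constancy of $S$ and $f_{4}$, the translator relations, Codazzi/Ricci identities) and never from the sign of $\lim\Delta_{-V}H^{2}(p_{t})$, exactly as you observe. The only cosmetic difference is in the totally umbilical case, where the paper shortcuts via $H^{2}\leq 3S$ and $\inf H^{2}=3S$ to get $H$ constant and then $\lambda=H$ from \eqref{2.1-15}, whereas you re-derive $\lambda=\bar H$ from the limiting linear systems as in Theorem \ref{theorem 3.2}; both are valid.
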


\begin{proof}
We apply the generalized maximum principle for the operator $\Delta_{-V}$
to the function $-H^{2}$. Thus, there exists a sequence $\{p_{t}\}$ in $M^{3}$ such that
\begin{equation*}
\lim_{t\rightarrow\infty} H^{2}(p_{t})=\inf H^{2}=\bar H^{2}, \ \
\lim_{t\rightarrow\infty} |\nabla H^{2}(p_{t})|=0, \ \
\lim_{t\rightarrow\infty}\Delta_{-V} H^{2}(p_{t})\geq 0,
\end{equation*}
that is,
\begin{equation*}
\begin{cases}
\begin{aligned}
&\lim_{t\rightarrow\infty} H^{2}(p_{t})=\sup H^{2}=\bar H^{2},\quad
\lim_{t\rightarrow\infty} |\nabla H^{2}(p_{t})|=0,\\
&0\leq
\lim_{t\rightarrow\infty} |\nabla H|^{2}(p_{t})+S(\bar H-\lambda)\bar H.
\end{aligned}
\end{cases}
\end{equation*}
\vskip2mm
\noindent
By taking the limit and making use of the same assertion as in Theorem \ref{theorem 3.1}, we can prove
$\inf H^2>0$.
Hence, without loss of the generality, we can assume
$$
\lim_{t\rightarrow\infty}h_{ijl}(p_{t})=\bar h_{ijl}, \quad \lim_{t\rightarrow\infty}h_{ij}(p_{t})=\bar h_{ij}=\bar \lambda \delta_{ij},
\quad \lim_{t\rightarrow\infty}h_{ijkl}(p_{t})=\bar h_{ijkl},
$$
for $i, j, k, l=1, 2, 3$.
\noindent

\noindent By making use of the same assertion as in the proof of the
Theorem \ref{theorem 3.2}, we will discuss three cases.

\noindent Case 1: The values of the principal curvature $\bar \lambda_1$, $\bar \lambda_{2}$ and $\bar \lambda_3$ are not equal to each other.

\noindent This case does not exist.

\noindent Case 2: Two of the values of the principal curvature $\bar \lambda_1$, $\bar \lambda_2$ and $\bar \lambda_3$  are equal.

\noindent There are two scenarios:

\noindent First, one is not zero and the other two are equal to zero. We have
\begin{equation*}
\lambda^{2}=\bar H^{2}=\inf H^{2}, \ \ \lambda^{2}=S;
\end{equation*}
Second, one is zero and the other two are equal and not zero. We have
\begin{equation*}
  \lambda^{2}=\bar H^{2}=\inf H^{2}, \ \ \lambda^{2}=2S;
\end{equation*}

\noindent Case 3: The values of the principal curvature $\bar \lambda_1$, $\bar \lambda_{2}$ and $\bar \lambda_3$ are all equal.

\noindent We have
\begin{equation*}
\inf  H^{2}=(\bar \lambda_{1}+\bar \lambda_{2}+\bar \lambda_{3})^2=3S.
\end{equation*}
Since
$$0\leq 3S-H^{2}\leq \sup (3S-H^{2})=3S-\inf H^{2}=0.$$
Namely,  we obtain $H$ is constant.
Hence, we conclude from \eqref{2.1-15}
$$\lambda=H, \ \ \lambda^{2}=3S.$$
The proof of Theorem \ref{theorem 3.3} is finished.
\end{proof}

\vskip3mm
\noindent
{\it Proof of Theorem \ref{theorem 1.1}}. If $S=0$, we know that $x: M^{3}\to \mathbb{R}^{4}_{1}$ is a space-like affine plane $\mathbb{R}^{3}_{1}$, not necessarily passing through the origin. If $S \neq 0$,
from Theorem \ref{theorem 3.2} and Theorem \ref{theorem 3.3}, we have
\begin{enumerate}
\item $\lambda^{2}=S$ and $\sup H^{2}=\inf H^{2}=S$, or
\item $\lambda^{2}=2S$ and $\sup H^{2}=\inf H^{2}=2S$, or
\item $\lambda^{2}=3S$ and $\sup H^{2}=\inf H^{2}=3S$.
\end{enumerate}
It follows that the mean curvature $H$ and the principal curvature must be a constant. From \eqref{1.1-2} and \eqref{2.1-14}, we have
$$\lambda=H, \ \ \langle T, \vec N\rangle=0.$$ So the nonzero constant
vector $T = T^{T}$ is tangent to $x(M^{3})$ at each point of $M^{3}$. It follows that $x(M^{3})$ consists of a family of parallel planes
in $\mathbb{R}^{4}_{1}$ and thus, up to an isometry of $\mathbb{R}^{4}_{1}$, it is a cylinder $\mathbb{H}^{1}(a_{1}) \times \mathbb{R}^{2}$ or $\mathbb{H}^{2}(a_{2}) \times \mathbb{R}^{1}$ for $a_{1}>0$ and $a_{2}>0$, where $\mathbb{H}^{1}(a_{1})$ and $\mathbb{H}^{2}(a_{2})$ are hyperbolic curve and hyperboloid respectively. Besides, the parameters $a_{1}$ and $a_{2}$ can be determined by $\lambda$ via the defining equation \eqref{1.1-2}. By an easy computation, we have that $ \lambda>0$,  $a_{1}=\frac{1}{\lambda}$ and $a_{2}=\frac{2}{\lambda}$. Theorem \ref{theorem 1.1} is proved.

\end{document}